\newtheorem{theorem}{Theorem}[section]
\newtheorem{lemma}[theorem]{Lemma}
\newtheorem{corollary}[theorem]{Corollary}
\newtheorem{proposition}[theorem]{Proposition}
\theoremstyle{definition}
\newtheorem{definition}[theorem]{Definition}
\newtheorem{example}[theorem]{Example}
\newtheorem{notation}[theorem]{Notation}
\theoremstyle{remark}
\newtheorem{remark}[theorem]{Remark}
\definecolor{DarkBlue}{rgb}{0,0.1,0.55}
\newcommand{\bfdef}[1]{{\bf \color{DarkBlue}  \emph{#1}}}
\newcommand{\BIGOP}[1]{\mathop{\mathchoice%
{\raise-0.22em\hbox{\huge $#1$}}%
{\raise-0.05em\hbox{\Large $#1$}}{\hbox{\large $#1$}}{#1}}}
\newcommand{\bigtimes}{\BIGOP{\times}}
\numberwithin{equation}{section}
\newcommand {\hide}[1]{}
\newcommand {\junk}[1]{}
\newcommand {\R} {{\rm R}}
\newcommand {\K}     {\mbox{\rm K}}
\newcommand {\C}     {{\rm C}}
\newcommand {\Sphere}{\mbox{${\bf S}$}}     
\newcommand {\Z}  {\mathbb{Z}}
\newcommand {\Q}         {\mathbb{Q}}
\newcommand{\F}{\mathbb{F}}
\newcommand {\RR} {{\mathcal R}}
\newcommand {\V} {\mathbf{V}}
\newcommand {\la}   {{\langle}}
\newcommand {\ra}   {{\rangle}}
\newcommand {\PP}     {\mathbb{P}} 
\newcommand {\Rec}      {{\rm Rec}}
\newcommand {\Trunc}      {{\rm Trunc}}
\def\addots{\mathinner{\mkern1mu
\raise1pt\vbox{\kern7pt\hbox{.}}
\mkern2mu\raise4pt\hbox{.}\mkern2mu
\raise7pt\hbox{.}\mkern1mu}}
\newcommand{\HH}  {\mbox{\rm H}}
\newcommand{\defeq}{\;{\stackrel{\text{\tiny def}}{=}}\;}
\newcommand{\dlim}{\mathop{\lim\limits_{\longrightarrow}}}
\newcommand{\W}{\mathbf{W}}
\newcommand{\x}{\mathbf{x}}
\newcommand{\X}{\mathbf{X}}
\newcommand{\y}{\mathbf{y}}
\newcommand{\Y}{\mathbf{Y}}
\newcommand{\ZB}{\mathbf{Z}}
\newcommand{\Hom}{\mathrm{Hom}}
\begin{document}
\title[]
{A complex analogue of Toda's Theorem
}
\author{Saugata Basu}
\address{Department of Mathematics,
Purdue University, West Lafayette, IN 47906, U.S.A.}
\email{sbasu@math.purdue.edu}
\thanks{The 
author was supported in part by NSF 
grants CCF-0634907 and  CCF-0915954. 
Communicated by Peter B{\"u}rgisser.
} 

\subjclass{Primary 14F25, 14Q20; Secondary 68Q15}
\date{\textbf{\today}}

\keywords{Polynomial hierarchy, Betti numbers, constructible sets, Toda's 
theorem}

\begin{abstract}
Toda \cite{Toda} proved in 1989 that the (discrete) polynomial time hierarchy,
$\mathbf{PH}$, 
is contained in the class $\mathbf{P}^{\#\mathbf{P}}$,
namely the class of languages that can be 
decided by a Turing machine in polynomial time given access to an
oracle with the power to compute a function in the counting complexity
class $\#\mathbf{P}$. This result, which illustrates the power of counting 
is considered to be a seminal result in computational complexity theory.
An analogous result (with a compactness hypothesis)
in the complexity theory over the reals  
(in the sense of  Blum-Shub-Smale real 
machines \cite{BSS89}) 
was proved in \cite{BZ09}. 
Unlike Toda's proof in the discrete case, which relied
on sophisticated combinatorial arguments, the proof in \cite{BZ09}
is topological in nature in which the properties of the topological join
is used in a fundamental way. However, the constructions used in 
\cite{BZ09} were semi-algebraic -- they used real inequalities in an
essential way and as such do not extend to the complex case. 
In this paper, we extend the techniques 
developed in \cite{BZ09} to the complex projective case. 
A key role is played by
the complex join of quasi-projective complex varieties. 
As a consequence we obtain a complex analogue of Toda's theorem. 
The results contained in this paper, taken together with those contained in 
\cite{BZ09}, illustrate the central role of the Poincar\'e polynomial 
in algorithmic algebraic geometry, as well as,
in computational complexity theory over the complex and real numbers
-- namely, the ability to compute it efficiently enables one to decide in 
polynomial time all languages in the 
(compact) polynomial hierarchy over the appropriate field.
\end{abstract}

\maketitle

\section{Introduction and Main Results}
\label{sec:intro}

\subsection{History and Background}
The primary motivation for this paper comes from 
classical (i.e. discrete) computational complexity theory. 
In classical complexity theory, there is a seminal result due to Toda
\cite{Toda}
linking the complexity of counting
with that of  deciding sentences with
a fixed number of quantifier alternations. 

More precisely, Toda's theorem gives the following inclusion 
(see Section \ref{subsec:counterparts} below or refer to \cite{Pap}
for precise definitions of the complexity classes appearing in the theorem).
\begin{theorem}[Toda \cite{Toda}]
\label{the:toda}
\[
{\bf PH} \subset {\bf P}^{\#{\bf P}}.
\]
\end{theorem}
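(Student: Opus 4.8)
The idea is to route the inclusion through the parity class $\oplus{\bf P}$ in two stages: first show that the entire polynomial hierarchy collapses into ${\bf BP}\cdot\oplus{\bf P}$, the class obtained by prepending a two-sided bounded-error probabilistic operator to $\oplus{\bf P}$, and then derandomize, proving ${\bf BP}\cdot\oplus{\bf P}\subseteq{\bf P}^{\#{\bf P}}$ --- in fact with a single query to the oracle.

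For the first stage the engine is the Valiant--Vazirani isolation lemma: a randomized polynomial-time transformation taking a CNF formula $\varphi$ on $n$ variables to a formula $\psi$ so that $\psi$ is unsatisfiable whenever $\varphi$ is, while if $\varphi$ is satisfiable then with probability $\Omega(1/n)$ the formula $\psi$ has exactly one satisfying assignment. Since a unique witness yields a count of $1$ (odd) and no witness a count of $0$ (even), this isolation event is recognized by an $\oplus{\bf P}$ predicate; amplifying the $\Omega(1/n)$ success probability by independent repetitions --- legitimate because $\oplus{\bf P}$ is closed under disjunctions of polynomially many of its members, itself a consequence of closure under complement and intersection --- gives ${\bf NP}\subseteq{\bf BP}\cdot\oplus{\bf P}$, and likewise ${\rm co}{\bf NP}\subseteq{\bf BP}\cdot\oplus{\bf P}$. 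To climb the hierarchy I would invoke two structural facts: the self-lowness $\oplus{\bf P}^{\oplus{\bf P}}=\oplus{\bf P}$ (Papadimitriou--Zachos), and the fact that a ${\bf BP}$ operator in front of $\oplus{\bf P}$ admits error reduction and can absorb a further prepended $\exists$ or $\forall$ quantifier. An induction on the level $k$ then yields $\Sigma_k^{\bf P}\subseteq{\bf BP}\cdot\oplus{\bf P}$ for all $k$, hence ${\bf PH}\subseteq{\bf BP}\cdot\oplus{\bf P}$.

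For the second stage the key device is an arithmetic amplification performed inside $\#{\bf P}$. Let $f_0$ be the $\#{\bf P}$ function counting accepting paths of an $\oplus{\bf P}$ machine for a set $A$, so $f_0\equiv 1\equiv -1\pmod 2$ on $A$ and $f_0\equiv 0\pmod 2$ off $A$. Iterate the substitution $y\mapsto 3y^4+4y^3$: the identity $3y^4+4y^3+1=(y+1)^2(3y^2-2y+1)$ shows that this map sends $y\equiv 0\pmod{2^k}$ to $\equiv 0\pmod{2^{2k}}$ and $y\equiv -1\pmod{2^k}$ to $\equiv -1\pmod{2^{2k}}$, and $\#{\bf P}$ is closed under the nonnegative integer additions and multiplications needed to realize it; after $O(\log m)$ iterations one obtains a $\#{\bf P}$ function $F$ with $F(x)\equiv -1\pmod{2^m}$ for $x\in A$ and $F(x)\equiv 0\pmod{2^m}$ for $x\notin A$. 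Now given $L\in{\bf BP}\cdot\oplus{\bf P}$ with underlying $\oplus{\bf P}$ set $A$ and random strings of length $q(n)$, set $m=q(n)+1$ and $G(x)=\sum_{r\in\{0,1\}^{q(n)}}F(x,r)$, which is again a $\#{\bf P}$ function. Then $G(x)\equiv -N(x)\pmod{2^m}$ with $N(x)=|\{r:(x,r)\in A\}|\le 2^{q(n)}<2^m$, so the residue of $G(x)$ modulo $2^m$ recovers $N(x)$ exactly; a polynomial-time machine making one call to the oracle $G$ then compares $N(x)$ with the threshold $2^{q(n)-1}$ and decides $L$. Hence $L\in{\bf P}^{\#{\bf P}}$.

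The hard part --- and the genuine content of the theorem --- is the first stage, and within it the isolation step: a priori there is no reason that the parity of the number of witnesses should reflect the mere existence of a witness, and it is precisely the probabilistic hashing argument of Valiant and Vazirani (intersecting the solution set with the fibre of a random affine map and showing it becomes a singleton with non-negligible probability) that bridges ${\bf NP}$ and $\oplus{\bf P}$ and forces the ${\bf BP}$ operator into the picture. Everything else is careful bookkeeping about closure properties; one must in particular ensure that, all the way up the hierarchy, the successive absorptions of quantifiers and error reductions keep the parity layer a single $\oplus{\bf P}$ layer and the error bounded --- which is exactly what the self-lowness of $\oplus{\bf P}$ guarantees.
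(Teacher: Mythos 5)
Your sketch is correct and follows exactly the two-stage route that the paper itself summarizes in the remark following Theorem~\ref{the:toda}: first ${\bf PH}\subset{\bf BP}\cdot\oplus\cdot{\bf P}$ via Valiant--Vazirani isolation, Sch\"oning-style absorption of quantifiers, and self-lowness of $\oplus{\bf P}$, then ${\bf BP}\cdot\oplus\cdot{\bf P}\subset{\bf P}^{\#{\bf P}}$ via the modular amplification $y\mapsto 3y^4+4y^3$ and a single oracle call. Note that the paper does not actually prove this theorem --- it is cited from Toda and only its proof structure is described, precisely so as to motivate why that combinatorial/probabilistic argument does not transfer to the B-S-S setting; your algebraic details (the identity $(y+1)^2(3y^2-2y+1)=3y^4+4y^3+1$ and the resulting $2^k\to 2^{2k}$ doubling) are accurate.
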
 
In other words, any language in the (discrete) polynomial hierarchy can
be decided by a Turing machine in polynomial time, given access to an
oracle with the power to compute  a function in $\#\mathbf{P}$.

\begin{remark}
The proof of Theorem \ref{the:toda} in \cite{Toda} 
is quite non-trivial.
While it is obvious that the classes $\mathbf{P},\mathbf{NP},\mathbf{coNP}$
are contained in ${\bf P}^{\#{\bf P}}$, the proof for the higher
levels of the polynomial hierarchy is quite intricate and proceeds in 
two steps: first proving that the $\mathbf{PH} \subset 
\mathbf{BP} \cdot\oplus\cdot \mathbf{P}$ (using previous  results of
Sch{\"o}ning \cite{Schoning},  and Valiant and Vazirani \cite{VV}), 
and then showing that
$\mathbf{BP} \cdot\oplus\cdot \mathbf{P} \subset {\bf P}^{\#{\bf P}}$.
Aside from the obvious question about what should be a proper analogue of
the complexity class $\# \mathbf{P}$ over the reals or complex numbers, 
because of the presence 
of complexity classes such as $\mathbf{BP}$ in the proof,
there seems to be no direct way of extending such a proof 
to real or complex complexity classes in the sense of 
Blum-Shub-Smale model of computation \cite{BSS89,Shub-Smale96}. 
This is not entirely surprising, since complexity results in the
Blum-Shub-Smale over different fields, while superficially similar,
often require completely different proof techniques. For example,
the fact that the polynomial hierarchy, $\mathbf{PH}$ 
is contained in the class $\mathbf{EXPTIME}$ is obvious over finite fields, 
but is non-trivial
to prove over real closed or algebraically closed fields (where
it is a consequence of efficient quantifier elimination algorithms).

The proof of the main theorem (Theorem \ref{the:main}) of this paper,
which can be seen as a complex analogue of Theorem \ref{the:toda}, proceeds
along completely different lines from the classical 
(that is over finite fields) case, and is mainly topological in nature.
\end{remark}

In the late eighties Blum, Shub, and Smale \cite{BSS89,Shub-Smale96}
introduced the notion
of Turing machines over more general fields, thereby generalizing
the classical problems of computational complexity theory such as
$\mathbf{P}$ vs. $\mathbf{NP}$ to corresponding problems over arbitrary fields
(such as the real, complex, $p$-adic numbers etc.) If one considers 
languages accepted by a  Blum-Shub-Smale machine 
over a finite field, one recovers the classical notions of discrete complexity
theory. Over the last two decades there has been a lot of research activity 
towards proving real as well as complex analogues of well known theorems
in discrete complexity theory. The first steps in this direction were taken by
the authors 
Blum, Shub, and Smale (henceforth B-S-S) 
themselves, when they proved the 
$\mathbf{NP}_{\C}$-completeness of the problem of deciding whether
a  systems of 
polynomial equations 
has a solution (in affine space)
(this is the complex analogue of Cook-Levin's theorem that the satisfiability
problem is $\mathbf{NP}$-complete in the discrete case), and subsequently 
through the work of several  researchers 
(Koiran, B{\"u}rgisser, Cucker, Meer to name 
a few) a well-established complexity theory over the reals as well as 
complex numbers have been built up, which mirrors closely the discrete case.

Indeed, one of the main attractions of the Blum-Shub-Smale computational model
is that it provides a framework to prove complexity results over more general
structures than just finite fields, 
with the hope that such results will help to
unravel the algebro-geometric underpinnings of the basic separation questions
amongst complexity classes. It is also often interesting to investigate complex
(as well as real) analogues of results in 
discrete complexity theory, because doing so reveals 
underlying geometric and topological phenomena not visible in the
discrete case. 
From this viewpoint
it is quite natural to seek complex (as well as real) analogues of 
Toda's theorem; and as we will see in this paper (see also \cite{BZ09}),
Toda's theorem properly interpreted
over the real and complex numbers 
gives an unexpected connection between two important but distinct 
strands of algorithmic algebraic
geometry -- namely, {\em decision problems} involving quantifier elimination on
one hand, and the problems of 
{\em computing topological invariants} of constructible sets on the
other.
Indeed, the original result of Toda, together with its real and
complex counter-parts seem to suggest a deeper connection of a model-theoretic
nature, between the
problems of efficient quantifier-elimination and efficient computation of 
certain 
discrete invariants of definable sets in a structure,
which might be an interesting problem on its own 
to explore further in the future.

\subsection{Recent Work}
There has been a large body of recent research on obtaining 
appropriate real (as well as complex) analogues of results in discrete
complexity theory, especially those related to counting complexity classes
(see \cite{Meer00,BC2,Burgisser-Cucker05,Burgisser-Cucker06}).
In \cite{BZ09} a real analogue of Toda's theorem was proved (with a compactness
hypothesis). In this paper we prove a similar result in the complex case.
Even though the basic approach is similar in both cases,
the topological tools in the complex case are different enough to merit a 
separate treatment. 
This is elaborated further in the next section 
(the main difficulty in extending the real arguments in \cite{BZ09}
to the complex case is 
that we can no longer use inequalities in our constructions).  

\subsection{Definitions of complexity classes}
In order to formulate
our result it is first necessary to define precisely 
complex counter-parts of the discrete 
polynomial time hierarchy ${\bf PH}$ and
the discrete complexity class $\# {\bf P}$,
and this is what we do next.

\subsubsection{Complex  counter-parts of ${\bf PH}$ and $\# {\bf P}$}
\label{subsec:counterparts}

For the rest of the paper $\C$ will denote an algebraically closed field of 
characteristic zero (there
is no essential loss in assuming that $\C = {\mathbb C}$) (indeed by a transfer
argument it suffices to prove all our results in this case).
By a \bfdef{complex machine} we will mean a  machine in the sense of 
Blum-Shub-Smale \cite{BSS89})
over the ground field $\C$.

\medskip
\noindent
{\em Notational convention.}
Since in what follows we will be forced to deal with multiple blocks of
variables in our formulas, we follow a notational convention by
which we denote blocks of variables by bold letters with superscripts 
(e.g. $\X^i$ denotes the $i$-th block), and we use non-bold letters
with subscripts to denote single variables (e.g. $X^i_j$ denotes the
$j$-th variable in the $i$-th block).
We use $\x^i$ to denote a specific value of the block of variables $\X^i$.

\begin{definition}
\label{def:multi-homogeneous}
We will call a quantifier-free first-order formula (in the language of fields),
$\phi(\X^1;\cdots;\X^\omega)$, having
several blocks of variables  $(\X^1,\ldots,\X^\omega)$ 
to be \bfdef{multi-homogeneous} if each polynomial appearing
in it is  multi-homogeneous in the blocks of variables 
$(\X^1,\ldots,\X^\omega)$
and such that $\phi$ is satisfied whenever any one of the blocks 
$\X^i = 0$.
Recall that a polynomial $P \in \C[\X^1;\cdots;\X^\omega]$ is multi-homogeneous
of multi-degree $(d_1,\ldots,d_\omega)$ if and only if it 
satisfies the identity
\[
P(\lambda_1 \X^1; \cdots ; \lambda_\omega \X^\omega) = 
\lambda_1^{d_1}\cdots \lambda_\omega^{d_\omega} P(\X^1;\cdots;\X^\omega).
\]
Clearly such a formula defines a \bfdef{constructible}
subset of $\PP^{k_1}_\C \times \cdots \times \PP^{k_\omega}_\C$ where the block
$\X^i$ is assumed to have $k_i+1$ variables. If $\omega = 1$, that is there is
only one block of variables, then we call $\phi$ a \bfdef{homogeneous} formula.
\end{definition}

\begin{notation}[Realization]
More generally, let 
$$
\displaylines{
\Phi(\X^1;\ldots;\X^\sigma) \defeq
(\mathrm{Q}_1 \Y^1) \cdots (\mathrm{Q}_\omega \Y^\omega) 
\phi(\X^1;\cdots;\X^\sigma;\Y^1;\cdots;\Y^\omega)
}
$$
be a (quantified) multi-homogeneous formula, with 
$\mathrm{Q}_i \in \{\exists,\forall\}, 1 \leq i \leq \omega$,
$\phi$ a quantifier-free multi-homogeneous formula,
and $\X^i$ (resp. $\Y^j$) is a block of $k_i + 1$ (resp. $\ell_j +1$)
variables.
We denote by $\RR(\Phi) \subset \PP_\C^{k_1} \times \cdots \times \PP_\C^{k_\sigma}$ 
the constructible set which is the \bfdef{realization} of the formula $\Phi$; 
i.e.,
$$ 
\displaylines{
\RR(\Phi(\X))=\{(\x^1,\ldots,\x^\sigma) \in \PP^{k_1}_\C\times\cdots\times
\PP_\C^{k_\sigma} \mid \cr 
(\mathrm{Q}_1 \y^1 \in \PP_\C^{\ell_1}) \cdots (\mathrm{Q}_\omega \y^\omega \in \PP_\C^{\ell_\omega}) 
\phi(\x^1;\cdots;\x^\sigma;\y^1;\cdots;\y^\omega)
\}.
}
$$
Sometimes, 
in order to emphasize the block structure in a multi-homogeneous
formula, we will write the quantifications as 
$(\exists \Y \in \PP_\C^\ell)$ (resp. $(\forall \Y \in \PP_\C^\ell)$) instead of
just $(\exists \Y)$ (resp. $(\forall \Y)$). This is purely notational
and does not affect the syntax of the formula. 
\end{notation}

\begin{notation}[Negation of a multi-homogeneous formula]
It is clear that the property of multi-homogeneity is preserved by the
Boolean operations of conjunction and disjunction. In order for it to
be preserved also under negation, we will adopt the
convention that the negation, $\neg \Phi(\X^1;\cdots;\X^\omega)$, of a
multi-homogeneous formula $\Phi(\X^1;\cdots;\X^\omega)$ is by definition
equal to
$$
\displaylines{
\tilde{\Phi} \vee \bigvee_{1 \leq i \leq \omega}(X^i = 0)
}
$$
where $\tilde{\Phi}$ is the usual negation of $\phi$ as a Boolean formula.
It is clear that defined this way, $\neg \Phi$ is multi-homogeneous, and 
\[
\RR(\neg{\Phi}) = \PP^{k_1}_\C \times \cdots \times \PP^{k_\omega}_\C
\setminus \RR(\Phi).
\]
\end{notation}

We say that two multi-homogeneous formulas, $\Phi$ and $\Psi$, are
\bfdef{equivalent} if $\RR(\Phi) = \RR(\Psi)$. Clearly, equivalent
multi-homogeneous formulas must have identical number of blocks of free 
variables,
and the corresponding block sizes must also be equal. 

Since the notion of multi-homogeneous formulas might look a bit unusual 
at first glance from the point of view of logic,
we illustrate below how to  homogenize
non-homogeneous formulas by considering the following simple example 
(which is a building block for the ``repeated squaring'' technique used to 
prove doubly exponential lower bounds for (real) quantifier elimination 
\cite{DH}).

\begin{example}
Let $\Phi(X)$ be the following (existentially) 
quantified non-homogeneous formula
expressing the fact, that $X^4 = 1$.

\begin{equation*}
\Phi(X) \defeq \exists Y (Y^2 -1 = 0) \wedge (Y - X^2 = 0).
\end{equation*}

A multi-homogeneous version of the same formula is given by:
\begin{equation*}
\Phi^h(X_0:X_1) 
\defeq \exists ((Y_0:Y_1) \in \PP^1_\C) (Y_1^2 -Y_0^2 = 0) \wedge (X_0^2Y_1 - X_1^2Y_0 = 0).
\end{equation*}

Notice that the quantifier-free bi-homogeneous formula 
\[
\Psi^h(X_0:X_1;Y_0:Y_1) \defeq (Y_1^2 -Y_0^2 = 0) 
\wedge (X_0^2Y_1 - X_1^2Y_0 = 0)
\]
defines a constructible subset of $\PP_\C^1 \times \PP_\C^1$, and
that the affine part of the constructible subset of $\PP_\C^1$ defined
by $\Phi^h$ coincides with the constructible subset of $\C^1$ 
defined by $\Phi(X)$. 
\end{example}

\subsection{Complex analogue of $\mathbf{PH}$}
The definition of the polynomial hierarchy over $\C$ mirrors
that of the discrete case (see \cite{Stockmeyer}) very closely.

\begin{definition}[The class $\mathbf{P}_{\C}$]
A sequence 
$$
\displaylines{
\left(T_n \subset \C^{n}
\right)_{n > 0}
}
$$  
of constructible subsets
is said to belong to the class $\mathbf{P}_{\C}$
if there exists a 
B-S-S machine $M$ over $\C$ 
(see~\cite{BSS89, BCSS98}),
such that for all $\x \in \C^{n}$, the machine $M$ 
decides  membership of $\x$ in $T_n$ 
in time bounded by a polynomial in $n$.

More generally,
suppose that $k(n)$ is some fixed polynomial which is non-negative and 
increasing.  Let
$(T_n \subset \C^{k(n)})_{n >0}$ be a sequence of constructible sets.
We will say that $(T_n \subset \C^{k(n)})_{n >0}$ belongs to
$\mathbf{P}_{\C}$ if the sequence $(S_n \subset \C^{n})_{n > 0}$ belongs to
$\mathbf{P}_{\C}$, where $S_n$ is defined by 
$$
\displaylines{
S_{k(n)} = T_{k(n)}, \mbox{ for all } n>0, \cr
S_m = \emptyset, \mbox{ otherwise}.
}
$$ 
\end{definition}

\begin{definition}[The classes ${\bf \Sigma}_{\C,\omega}$ and 
${\bf \Pi}_{\C,\omega}$]
\label{def:sigma}
Let $\omega \geq  0$ be a fixed integer.
A sequence 
$$
\displaylines{
\left(
S_n \subset \C^{n}
\right)_{n > 0}
}
$$ 
of constructible subsets
is said to be in 
the complexity class ${\bf \Sigma}_{\C,\omega}$, 
if for each $n > 0$, the constructible set
$S_n$ is described by a first order formula
\begin{equation}\label{eq:alternation}
(\mathrm{Q}_1 \Y^{1} )  \cdots (\mathrm{Q}_\omega \Y^{\omega} )
\phi_n(X_1,\ldots,X_{n},\Y^1,\ldots,\Y^\omega),
\end{equation}
with $\phi_n$ a quantifier free formula in the first order theory of $\C$,
and for each $i, 1 \leq i \leq \omega$,
$\Y^i = (Y^i_1,\ldots,Y^i_{n})$ is a block of $n$ variables,
$\mathrm{Q}_i \in \{\exists,\forall\}$, with $\mathrm{Q}_j \neq \mathrm{Q}_{j+1}, 1 \leq j < \omega$,
$\mathrm{Q}_1 = \exists$,
and 
the sequence 
$$\displaylines{
\left(
T_n \subset \C^{n}\times \underbrace{\C^{n} \times
 \cdots \times \C^{n}}_{\omega \mbox{ times}}\right)_{n >0}
}
$$
of constructible subsets   
defined by the quantifier-free formulas $(\phi_n)_{n>0}$ 
belongs to the class ${\bf P}_{\C}$.

Similarly, the complexity class ${\bf \Pi}_{\C,\omega}$
is defined as in Definition~\ref{def:sigma}, with the difference 
that the  alternating quantifiers in~\eqref{eq:alternation} start with 
$\mathrm{Q}_1=\forall$.
\end{definition}

\begin{remark}
\label{rem:padding}
Notice that in Definition \ref{def:sigma} there is no loss of generality
in assuming that the sizes of the blocks of variables
$\X,\Y^1,\ldots,\Y^\omega$ are all equal. 
To be more precise, suppose
that the size of block $\X$ is $k(n)$, and that  of $\Y^i$ is $k_i(n)$ for 
$1 \leq i \leq \omega$, where 
$k(n),k_1(n),\ldots,k_\omega(n)$ are fixed non-negative polynomials.
Let 
$$
\displaylines{
\left(
S_n \subset \C^{k(n)}
\right)_{n > 0}
}
$$ 
be a sequence of constructible subsets
described by a first order formula
\begin{equation}
(\mathrm{Q}_1 \Y^{1} )  \cdots (\mathrm{Q}_\omega \Y^{\omega} )
\phi_n(\X,\Y^1,\ldots,\Y^\omega),
\end{equation}
with $\phi_n$ a quantifier free formula in the first order theory of $\C$,
and
$\mathrm{Q}_i \in \{\exists,\forall\}$, with $\mathrm{Q}_j \neq \mathrm{Q}_{j+1}, 1 \leq j < \omega$,
such that
the sequence 
$$\displaylines{
\left(
T_n \subset \C^{k(n)}\times \C^{k_1(n)} \times
 \cdots \times \C^{k_\omega(n)}\right)_{n >0}
}
$$
of constructible subsets   
defined by the quantifier-free formulas $(\phi_n)_{n>0}$ belongs to
$\mathbf{P}_\C$.

Let $\tilde{k}(n)$ be any non-negative polynomial which 
majorizes $k(n),k_1(n),\ldots,k_\omega(n)$,
and let 
$\tilde{\X} = (\X,\X'), \tilde{\Y}^i = (\Y^i,{\Y^i}'), 1 \leq i \leq \omega$,
be blocks of variables obtained from the blocks $\X,\Y^i$,
of size $\tilde{k}(n)$ by padding by an appropriate
number of extra variables, $\X',{\Y^i}'$, respectively.
By identifying the subspace of $\C^{\tilde{k}(n)}$
defined by setting the variables in the block $\X'$ to $0$, 
with $\C^{k(n)}$ (and thus identifying $S_n$ with its image
under the corresponding inclusion in $\C^{\tilde{k}(n)}$),
we have a sequence
$$
\displaylines{
\left(
S_n \subset \C^{\tilde{k}(n)}
\right)_{n > 0}
}
$$ 
of constructible subsets described by the formula
\begin{equation}
(\mathrm{Q}_1 \tilde{\Y}^{1})  \cdots (\mathrm{Q}_\omega \tilde{\Y}^{\omega})
\phi_n({\X},{\Y}^1,\ldots,{\Y}^\omega) \wedge (\X' = 0).
\end{equation}

It is clear that the sequence
$$\displaylines{
\left(
T_n \subset \C^{k(n)}\times \C^{k_1(n)} \times
 \cdots \times \C^{k_\omega(n)}\right)_{n >0}
}
$$
belongs to the class $\mathbf{P}_\C$ if and only if the sequence 
$$\displaylines{
\left(
\tilde{T}_n \subset \C^{\tilde{k}(n)}\times \C^{\tilde{k}(n)} \times
 \cdots \times \C^{\tilde{k}(n)}\right)_{n >0}
}
$$
defined by 
\[
\tilde{\phi}_n(\tilde{\X},\tilde{\Y}^1,\ldots,\tilde{\Y}^\omega)
:= 
\phi_n({\X},{\Y}^1,\ldots,{\Y}^\omega) \wedge (\X' = 0).
\]
belongs to the class $\mathbf{P}_\C$. 
In other words (up to padding by some additional variables $\X'$ as above)
there is no loss of generality in assuming that all the block sizes
are equal. 
\end{remark}

Since, adding an additional block of quantifiers on the outside
(with new variables 
that do not appear in the quantifier-free formula $\phi_n$) 
does not change the set defined by a quantified formula
we have the following inclusions:
$$ {\bf \Sigma}_{\C,\omega}\subset {\bf \Pi}_{\C,\omega+1}, 
\text{\ and\ }
{\bf \Pi}_{\C,\omega}\subset{\bf \Sigma}_{\C,\omega+1}.
$$

Note that by the above definition the class 
${\bf \Sigma}_{\C,0} = {\bf \Pi}_{\C,0}$ 
is the 
class ${\bf P}_{\C}$, 
the class ${\bf \Sigma}_{\C,1} = {\bf NP}_{\C}$ and the
class ${\bf \Pi}_{\C,1} = {\bf \mbox{co-}NP}_{\C}$.

\begin{definition}[Complex polynomial hierarchy] 
The complex polynomial time hierarchy is defined to be the union
\[
{\bf PH}_{\C} \defeq \bigcup_{\omega \geq 0} 
({\bf \Sigma}_{\C,\omega} \cup {\bf \Pi}_{\C,\omega}) = 
\bigcup_{\omega \geq 0} {\bf \Sigma}_{\C,\omega}  = 
\bigcup_{\omega \geq 0} {\bf \Pi}_{\C,\omega}.
\]
\end{definition}

As in the real case studied in \cite{BZ09}
for technical reasons 
we need to restrict to compact constructible sets.
However, unlike in \cite{BZ09} where the compact languages consisted
of closed semi-algebraic subsets of spheres, in this paper we consider
closed subsets of projective spaces instead. This is a much more natural
choice for defining compact complex complexity classes. 

We now define the compact analogue of
${\bf PH}_{\C}$ that we will denote ${\bf PH}_{\C}^c$.
Unlike in the non-compact case, we will assume all variables
vary over certain compact sets (namely complex projective spaces 
of varying dimensions).

We first need to be precise about what we mean by a complexity class
of sequences of constructible subsets of complex projective spaces.

\begin{notation}[Affine cones]
\label{not:affinecone}
For any constructible subset $S \subset \PP^k_\C$ we denote by 
$C(S) \subset \C^{k+1}$ the affine cone over $S$.
More generally, if $S \subset \PP_\C^{k_1} \times \cdots \times \PP_\C^{k_\omega}$
is a constructible subset, then $C(S)\subset \C^{k_1+1} \times \cdots \times
\C^{k_\omega+1}$ will denote the union of 
$L^1 \times \cdots \times L^\omega$ such that each $L^i \subset \C^{k_i+1}$ is 
a line through the origin, such that the point in 
$\PP_\C^{k_1} \times \cdots \times \PP_\C^{k_\omega}$ represented by 
$(L^1,\ldots,L^\omega)$
is in $S$.
\end{notation}

\begin{definition}
\label{def:compactP}
We say that a sequence 
$$\displaylines{
\left(
S_n \subset 
\underbrace{\PP^n_\C \times \cdots \times \PP^n_\C}_{n \mbox{ times}}
\right)_{n > 0}
}
$$ 
of constructible subsets is in the complexity class $\mathbf{P}_{\C}$, 
if the sequence of affine cones
$\left(C(S_n) \subset 
\underbrace{\C^{n+1} \times \cdots \times \C^{n+1}}_{n \mbox{ times}}
\right)_{n>0}
$
belongs to the complexity class $\mathbf{P}_\C$.
\end{definition}

\begin{remark}
The subspaces spanned by the increasing sequence of standard  basis
elements of 
\[
\C = \la e_0 \ra \subset \C^2 = \la e_0,e_1\ra \subset \cdots \subset \C^{n+1} = \la e_0,\ldots,e_n \ra \subset \cdots
\]
after projectivization gives a flag 
\[
\PP^0_\C \subset \PP^1_\C \subset \cdots \subset \PP^{n}_\C \subset \cdots
\]
For $0 \leq m\leq n$, let $\iota_{m,n}: \PP^{m}_\C \hookrightarrow \PP^n_\C$
denote the corresponding inclusion.

Now, if 
$(S_n \subset \PP^n_\C)_{n > 0}$ is a sequence 
of constructible sets, we can after identifying
$\PP^n_\C$ with the subspace 
\[
\PP^{n}_\C \times
\underbrace{\iota_{0,n}(\PP^0_\C) \times \cdots \times \iota_{0,n}(\PP^0_\C)}_{n-1 \mbox{ times}}
\]
of
$\underbrace{\PP^n_\C \times \cdots \times \PP^n_\C}_{n \mbox{ times}}$
identify the sequence $(S_n \subset \PP^n_\C)_{n > 0}$ with 
the sequence 
$$
\displaylines{
(\tilde{S_n} \subset \underbrace{\PP^n_\C \times \cdots \times \PP^n_\C}_{n \mbox{ times}})_{n > 0}.
}
$$
where
$$
\tilde{S_n} = S_n \times
\underbrace{\iota_{0,n}(\PP^0_\C) \times \cdots \times \iota_{0,n}(\PP^0_\C)}_{n-1 \mbox{ times}}.
$$

We will (by abuse of language)  
say that the sequence $(S_n \subset \PP^n_{\C})_{n > 0}$ belongs to the
class $\mathbf{P}_\C$ if the sequence 
$$
\displaylines{
(\tilde{S_n} \subset \underbrace{\PP^n_\C \times \cdots \times \PP^n_\C}_{n \mbox{ times}})_{n > 0}.
}
$$
belongs to class $\mathbf{P}_\C$.

More generally, suppose that $m(n)$ is a non-negative polynomial in $n$ and,
$(k_i(n))_{i > 0}$ a sequence of non-negative polynomials such that there
exists a polynomial $k(n)$ which majorizes
$m(n),k_1(n),k_2(n),\ldots,k_{m(n)}(n)$
for all $n>0$. For example, we could have $m(n) = n$, and 
$k_i(n) = i n$. Clearly, in this case the polynomial $k(n) = n^2+1$
majorizes 
$m(n),k_1(n),k_2(n),\ldots,k_{m(n)}(n)$ are 
for all $n>0$.

We say that a sequence 
$$
\displaylines{
\left(
S_n \subset \PP_{\C}^{k_1(n)} \times \cdots \times \PP_{\C}^{k_{m(n)}(n)} \right)_{n > 0}
}
$$ 
is in $\mathbf{P}_\C$,
if the sequence
$$\displaylines{
\left(
T_n \subset 
\underbrace{\PP^n_\C \times \cdots \times \PP^n_\C}_{n \mbox{ times}}
\right)_{n > 0}
}
$$ 
is in $\mathbf{P}_\C$, where
$$
\displaylines{
T_{k(n)} = \tilde{S_n} \mbox{ for all } n> 0, \cr
T_n = \emptyset, \mbox{ otherwise }
}
$$
and
$$
\tilde{S}_n \subset 
\underbrace{\PP_{\C}^{k(n)} \times \cdots \times \PP_{\C}^{k(n)}}_{ k(n) \mbox{ times}}
$$
is defined by 
$$
\displaylines{
\tilde{S}_n = 
\iota_{k_1(n),k(n)} \times \cdots \times \iota_{k_{m(n)},k(n)}(S_n)
\times 
\underbrace{\iota_{0,k(n)}(\PP^0_\C) \times \cdots \times 
\iota_{0,k(n)}(\PP^0_\C)}_{k(n) - m(n)
\mbox{ times }}
.
}
$$
\end{remark}

\begin{definition}[Compact projective 
version of ${{\mathbf \Sigma}}_{\C,\omega}$]
\label{def:compactpolynomialhierarchy}
We say that a sequence 
$$\displaylines{
\left(
S_n \subset \underbrace{\PP_{\C}^{n} \times \cdots \times \PP_{\C}^{n}}_{n \mbox{ times}} \right)_{n > 0}
}
$$ 
of constructible subsets 
is in the complexity class ${\bf \Sigma}_{\C,\omega}^c$,
if for each $n > 0$,
$S_n$ is described by a first order formula
\[
 (\mathrm{Q}_1 \Y^{1} \in \PP_{\C}^{n})  
\cdots (\mathrm{Q}_\omega \Y^{\omega} \in 
\PP_{\C}^{n} )
\phi_n(\X^1;\cdots;\X^{n};\Y^1;\cdots;\Y^\omega),
\]
with $\phi_n$ a quantifier-free first order multi-homogeneous
formula defining a 
{\em closed} (in the Zariski topology) subset of 
\[
\underbrace{\PP_{\C}^{n} \times \cdots \times 
\PP_\C^{n}}_{n \mbox{ times}} \times  
\underbrace{\PP_{\C}^{n}\times\cdots\times
\PP_{\C}^{n}
}_{\omega \mbox{ times }},
\]
$\mathrm{Q}_i \in \{\exists,\forall\}$, 
$\mathrm{Q}_1 = \exists$, and
the sequence of constructible sets  $(T_n)_{n >0}$ 
defined by the formulas $(\phi_n)_{n >0}$
belongs to the class $\mathbf{P}_{\C}$.
\end{definition}

\begin{remark}
As remarked before (cf. Remark \ref{rem:padding}), 
it is not essential to have all the block
sizes to be equal in the above definition as long as all
the number and the sizes of the blocks are polynomially bounded, 
and we will by 
a slight abuse of language allow polynomially bounded
number of blocks with polynomially bounded, but not necessarily
equal,  block sizes in 
what follows without further remark.
\end{remark}


\begin{example}
\label{eg:compact}
We give a very natural example of a language 
in $\mathbf{\Sigma}_{\C,1}^c$ (i.e. the 
compact version of $\mathbf{NP}_{\C}$).
Let $k(n,d) = \binom{n+d}{d}$ and identify 
\[
\underbrace{\PP_{\C}^{k(n,d)-1} \times \cdots \times \PP_{\C}^{k(n,d)-1}}
_{n+1 \mbox{ times}}
\]
with systems of $n+1$ homogeneous polynomials 
in 
$n+1$
variables of degree 
$d$. 
Let 
\[
S_{n,d} \subset \underbrace{\PP_{\C}^{k(n,d)-1} \times \cdots \times \PP_{\C}^{k(n,d)-1}}
_{n+1 \mbox{ times}}
\]
be 
defined by 
$$
\displaylines{
S_{n,d} = \{(P_1;\cdots;P_{n+1}) \;\mid\;  P_i \in
\PP_{\C}^{k(n,d)-1} \mbox{ and } \exists~\x 
=(x_0:\cdots:x_n) \in \PP_{\C}^n \mbox{ with } \cr
P_1(\x) = \cdots = P_{n+1}(\x) = 0 \}.
}
$$
In other words,  $S_{n,d}$ is the set of systems of $(n+1)$ 
homogeneous polynomial equations of degree  $d$, 
which have a zero in $\mathbb{P}^n_{\C}$.
Then it is clear from the definition of the class $\mathbf{\Sigma}_{\C,1}^c$ 
that for any fixed $d > 0$,
$$
\displaylines{
\left(
S_{n,d} \subset \underbrace{\PP_{\C}^{k(n,d)-1} \times \cdots \times \PP_{\C}^{k(n,d)-1}}
_{n+1 \mbox{ times}}
\right)_{n > 0} \in \mathbf{\Sigma}_{\C,1}^c.
}
$$

Note that it is \emph{not known} if for any fixed $d$ 

$$
\displaylines{
\left(
S_{n,d} \subset \underbrace{\PP_{\C}^{k(n,d)-1} \times \cdots \times \PP_{\C}^{k(n,d)-1}}
_{n+1 \mbox{ times}}
\right)_{n > 0}
}
$$
is $\mathbf{NP}_\C$-complete,
while the non-compact version of this language
i.e. the language consisting of
systems of polynomials having a 
zero in $\C^n$ (instead of $\mathbb{P}^n_\C$),  
has been shown to be $\mathbf{NP}_\C$-complete for $d \geq 2$ \cite{BCSS98}.
\end{example}

We define analogously the class ${\bf \Pi}_{\C,\omega}^c$, and finally 
define:
\begin{definition}
The \bfdef{compact projective polynomial hierarchy} over $\C$ 
is defined to be the union
\[
{\bf PH}_{\C}^c \defeq 
\bigcup_{\omega \geq 0} ({\bf \Sigma}_{\C,\omega}^c \cup 
{\bf \Pi}_{\C,\omega}^c) = \bigcup_{\omega \geq 0} {\bf \Sigma}_{\C,\omega}^c =
\bigcup_{\omega \geq 0} {\bf \Pi}_{\C,\omega}^c.
\]
\end{definition}

Notice that the constructible subsets belonging to any language in 
${\bf PH}_{\C}^c$ are all compact 
(in fact Zariski closed subsets of complex projective spaces). 

\begin{remark}
\label{rem:compact_classes}
The compact classes introduced above might be of interest in their own right.
As remarked earlier it is not known whether the compact language
$$
\displaylines{
\left(
S_{n,d} \subset \underbrace{\PP_{\C}^{k(n,d)-1} \times \cdots \times \PP_{\C}^{k(n,d)-1}}
_{n+1 \mbox{ times}}
\right)_{n > 0}
}
$$
in Example \ref{eg:compact} 
is $\mathbf{NP}_\C$-complete. It is important to resolve this
question in order to understand whether the hardness of solving 
polynomial systems over $\C$ is due to the non-compactness of the (affine) 
solution space, or 
due to some intrinsic algebraic reasons. 
\end{remark}

\subsubsection{Complex projective analogue of $\#{\bf P}$} 
\label{sec:sharpP}
We now define the complex analogue of $\#{\bf P}$
(cf. the class $\#{\bf P}_\R^{\dagger}$ defined in \cite{BZ09} in the real 
case).

We first need a notation.

\begin{notation}[Poincar\'e polynomial]
In case $\C = \mathbb{C}$,
for any constructible subset $S\subset \PP_\C^k$ 
we denote by $b_i(S)$ the $i$-th
Betti number (that is the rank of the singular 
homology group $\HH_i(S) \defeq \HH_i(S,\Q)$) of $S$. 

We also let $P_S \in \Z[T]$ denote the \bfdef{Poincar\'e polynomial} of $S$, 
namely

\begin{equation}
\label{def:Poincarepolynomial}
P_S(T)\; \defeq\; \sum_{i \geq 0}  b_i(S)\; T^i.
\end{equation}
\end{notation}

\begin{remark}
\label{rem:coefficients}
Since we are only going to be concerned with the Betti numbers of constructible
sets, we do not lose any information by considering homology groups
with coefficients in $\Q$ rather than in $\Z$, noting that
\[
\HH_i(S,\Q) = \HH_i(S,\Z) \otimes_\Z \Q.
\]
Note also that in this case by the universal coefficient theorem for cohomology
\cite{Spanier}, we have  that the cohomology groups 
\[
\HH^i(S) \defeq \HH^i(S,\Q) \cong \Hom(\HH_i(S,\Q),\Q).
\]
\end{remark}

\begin{remark}
Over an arbitrary algebraically closed field $\C$ of characteristic $0$, 
ordinary  singular homology is not well defined. 
We use a modified homology theory 
(which agrees with singular homology in case $\C = {\mathbb C}$
and which is homotopy invariant)  as done in \cite{BPRbook2} in case
of semi-algebraic sets over arbitrary real closed fields (see 
\cite{BPRbook2}, page 279).
Note that
by taking real and imaginary parts, 
every constructible set over $\C$ is a semi-algebraic 
set over an appropriate real closed subfield
-- namely, a maximal real subfield of $\C$.
\end{remark}

For the rest of the paper we will assume $\C = \mathbb{C}$, noting that all
the results generalize to arbitrary algebraically closed fields of 
characteristic $0$ using the transfer principle.

\begin{definition}[The class $\#{\bf P}_{\C}^{\dagger}$] 
\label{def:sharp}
We say a sequence of constructible functions 
$$
\displaylines{
\left(f_n:\PP_\C^n \rightarrow \Z[T]
\right)_{n > 0}
}
$$
is in the class $\#\mathbf{P}_{\C}^{\dagger}$,
if there exists a sequence
$$
\displaylines{
\left(
S_n \subset 
\PP_\C^n \times 
\underbrace{\PP_{\C}^{n} \times \cdots \times \PP_{\C}^{n}}
_{n \mbox{ times }}\right)_{n > 0}
\in \mathbf{P}_\C,
}
$$
such that
\[
f_{n}(\x) = P_{S_n,\x} 
\]
for each $\x \in \PP_\C^n$, where 
$S_{n,\x} = S_{n} \cap \pi_n^{-1}(\x)$ 
and 
\[
\pi_n: \PP_\C^n \times 
\underbrace{\PP_{\C}^{n} \times \cdots \times \PP_{\C}^{n}}_{n \mbox{ times }}
\rightarrow \PP_\C^n
\] 
is the projection along the last co-ordinates. 
\end{definition}

\begin{remark}
We make a few remarks about the class $\#{\bf P}_{\C}^{\dagger}$ 
defined above. First of all notice that
the  class $\#{\bf P}_{\C}^{\dagger}$ is quite robust.
For instance, given two sequences 
$(f_n)_{n > 0}, (g_n)_{n > 0} \in \#{\bf P}_{\C}^{\dagger}$ it follows
(by taking disjoint union of the corresponding constructible sets) that 
$(f_n+ g_n)_{n > 0} \in \#{\bf P}_{\C}^{\dagger}$, and also 
$(f_n g_n)_{n > 0} \in \#{\bf P}_{\C}^{\dagger}$ 
(by taking Cartesian product of the corresponding constructible sets
and using the multiplicative
property of the Poincar\'e polynomials, which itself is a consequence
of the Kunneth formula in homology theory \cite{Spanier}.)
\end{remark}

\begin{remark}
\label{rem:zeta}
The connection between counting points of varieties and their Betti numbers is 
more direct over fields of positive characteristic via the zeta function.
The zeta function of a variety defined over $\F_p$ is the exponential 
generating function of the sequence whose $n$-th term is the number of
points in the variety over $\F_{p^n}$. The zeta function of such a variety
turns out to be a rational function in one variable 
(a deep theorem of algebraic geometry first conjectured by Andre Weil
\cite{Weil}
and proved by Dwork \cite{Dwork} and  Deligne \cite{Deligne1, Deligne2}), 
and its numerator and denominator are products of 
polynomials whose degrees are the  Betti numbers of the variety 
with respect to a certain ($\ell$-adic) co-homology theory. The point of this 
remark is that the problems  of  ``counting'' varieties and computing their 
Betti numbers, are connected at a deeper level, and thus our 
choice of definition for a complex analogue 
of $\#{\bf P}$ is not altogether ad hoc.
\end{remark}

\begin{remark}
\label{rem:justification}
A different definition of the class $\#\mathbf{P}_\C^{\dagger}$ (more in line with 
previous work of B{\"u}rgisser  et al. 
\cite{Burgisser-Cucker05}) would be obtained by replacing
in Definition \ref{def:sharp} the Poincar\'e polynomial, $P_S(T)$, by the 
Euler-Poincar\'e characteristic i.e. the value of $P_S$ at $T = -1$.
The Euler-Poincar\'e characteristic is additive (at least 
when restricted to
complex varieties), and thus has some attributes of being a discrete analogue
of volume. But at the same time it should be noted that the 
Euler-Poincar\'e characteristic is a rather weak invariant -- for instance,
it does not determine the number of connected components
of a given variety. Also notice that in the case of finite fields referred to
in Remark \ref{rem:zeta}, all the Betti numbers, not just their alternating 
sum, enter (as degrees of factors) 
in the rational expression for the zeta function of a variety.
While it would certainly be a much stronger reduction result if one
could obtain a Toda-type theorem using only the Euler-Poincar\'e characteristic
instead of the whole Poincar\'e polynomial, it is at present unclear 
if such a theorem can be proven (see also Section \ref{sec:future}(C)).  
\end{remark}

\section{Statements of the main theorems}
We can now state the main result of this paper.
\begin{theorem}[Complex analogue of Toda's theorem]
\label{the:main}
\[
{\bf PH}^c_{\C} \subset {\bf P}_{\C}^{\#{\bf P}_{\C}^{\dagger}}.
\]
\end{theorem}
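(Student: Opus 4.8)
The plan is to follow the topological strategy of \cite{BZ09}, replacing the semi-algebraic join construction there by the complex join of quasi-projective varieties, which is the new technical device needed because inequalities are no longer available. The overall reduction has two conceptual pieces. First, one reduces a language in ${\bf PH}^c_\C$, given by a multi-homogeneous formula with $\omega$ quantifier alternations, to a question about Betti numbers of a single auxiliary constructible set, by iterating a construction that ``trades one quantifier for a join.'' Concretely, for a formula $(\mathrm{Q}_\omega \Y^\omega)\phi$, one builds a constructible family $Z$ (over the parameter space of the remaining free and quantified variables) whose fiberwise homology, through its Poincar\'e polynomial, encodes whether the innermost quantified subformula holds; the key geometric input is that the complex join $X_1 * X_2$ has the homology of a suspension-type object, so that taking joins with suitably chosen ``test'' varieties detects emptiness/nonemptiness of a projection and converts an existential (resp.\ universal) quantifier into a change in a single Betti number. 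Iterating this $\omega$ times — a fixed constant, so the blow-up in dimension stays polynomial — collapses all quantifiers and produces a sequence $(W_n)$ of constructible sets, still in $\mathbf{P}_\C$ as a family over the free variables $\X$, such that membership of $\x$ in the original language is read off from $P_{W_{n,\x}}(T)$.

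Second, one observes that the map $\x \mapsto P_{W_{n,\x}}(T)$ is, by construction and by Definition \ref{def:sharp}, exactly a function in $\#\mathbf{P}_\C^\dagger$: the family $(W_n)$ is the required sequence $(S_n)$, and the robustness remarks (closure of $\#\mathbf{P}_\C^\dagger$ under sum and product of Poincar\'e polynomials, coming from disjoint union and Kunneth) let us assemble the intermediate pieces of the construction without leaving the class. Finally, a $\mathbf{P}_\C$ machine with one oracle call to this $\#\mathbf{P}_\C^\dagger$ function recovers the relevant coefficient(s) of $P_{W_{n,\x}}$ and decides membership in polynomial time; this gives ${\bf PH}^c_\C \subset {\bf P}_\C^{\#\mathbf{P}_\C^\dagger}$. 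One must be slightly careful that all the geometric objects produced are Zariski closed (so that the compactness built into the definition of ${\bf \Sigma}^c_{\C,\omega}$ is respected throughout) and that the descriptions of the cones $C(W_n)$ are uniformly polynomial-time computable, i.e.\ the multi-homogeneous formulas describing them have polynomially bounded size and polynomially many blocks of polynomially bounded dimension.

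The main obstacle — and the reason this is not a routine transcription of \cite{BZ09} — is the quantifier-elimination step in the complex projective setting without inequalities. Over $\R$ one uses distance functions, infinitesimal thickenings, and closed/open semi-algebraic tricks to turn $\{\x : \exists\,\y\;\phi(\x,\y)\}$ into something whose topology one controls; none of that is available here, so the emptiness-of-a-projection of a projective variety must be detected purely homologically, via the behaviour of the complex join. Making the join construction functorial in the parameters, computing the fiberwise homology precisely enough to localize the ``signal'' in a predictable Betti number, and controlling what happens over the ``bad'' locus where the fiber of $\phi$ degenerates, is where the real work lies. I expect the bulk of the paper to be devoted to establishing the precise homological properties of the complex join of quasi-projective varieties and to showing that the iterated join construction stays within $\mathbf{P}_\C$ — once those are in hand, the assembly into the oracle reduction above is formal.
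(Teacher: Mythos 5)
Your high-level architecture --- iterate a join construction to eliminate quantifiers, then make one oracle call to a $\#\mathbf{P}^\dagger_\C$ function --- is the paper's route, but the mechanism you sketch is not what actually works, and an entire case is missing. The construction is not a join with ``test'' varieties that detects emptiness of a projection and ``converts a quantifier into a change in a single Betti number.'' It is the iterated complex \emph{self}-join fibered over the projection, $J^p_{\C,\mathrm{pr}_1}(A)$, and the governing relation (Theorem \ref{the:compactcovering}) is
\[
P_{\mathrm{pr}_1(A)} \;=\; (1-T^2)\,P_{J^p_{\C,\mathrm{pr}_1}(A)} \bmod T^{p},
\]
so that the \emph{entire truncated Poincar\'e polynomial} of the image of the projection is recovered from that of the fibered join. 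One iterates this through all $\omega$ alternations, carrying the full truncated Poincar\'e polynomial (multiplied by a known factor) at every step, and only extracts $b_0$ at the very end. A scheme that tries to localize the ``signal'' in a single predictable Betti number at each stage and detect fiberwise emptiness would not close the induction, because the intermediate objects are not points or empty sets but general closed constructible subsets whose full homology must be propagated.

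More seriously, your sketch gives no mechanism at all for the universal quantifier, and this is where the hardest technical choices are forced. The paper handles $\forall$ by passing to the complement and invoking Alexander--Lefschetz duality in $\PP^{k_1}_\C\times\cdots\times\PP^{k_\ell}_\C$; this forces the induction to alternate between closed and open constructible sets (hence the compact-covering hypothesis on the projections, which you do not mention), and it requires replacing the Poincar\'e polynomial by the pseudo-Poincar\'e polynomial $Q_S = P_S^{\mathrm{even}} - T\,P_S^{\mathrm{odd}}$, which is the invariant that transforms cleanly both under duality (Corollary \ref{cor:alexanderduality_general}) and under the join recursion (Corollary \ref{cor:compactcovering_general}). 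Without this you cannot eliminate an outermost $\forall$. There is also a purely projective obstruction your proposal does not anticipate: two adjacent quantifier blocks of the same type cannot be merged, since $\PP^a_\C\times\PP^b_\C$ is not a projective space; this is why the actual inductive statement (Proposition \ref{prop:main}) must carry polynomially many join blocks together with Boolean connectives $\Lambda^i$, rather than the single-block-per-quantifier formulation one would write in the affine case.
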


\begin{remark}
Note that following the usual convention 
${\bf P}_{\C}^{\#{\bf P}_{\C}^{\dagger}}$ denotes the class of 
languages accepted by a B-S-S machine over $\C$ in polynomial time
with access to an oracle which can compute functions in 
$\#{\bf P}_{\C}^{\dagger}$.
\end{remark}

\begin{remark}
We leave it as an open problem to prove Theorem \ref{the:main} with
${\bf PH}_{\C}$ instead of ${\bf PH}^c_{\C}$ on the left hand side. 
However, we also note that many theorems of complex algebraic  
geometry take their
most satisfactory form in the case of complete varieties, which is the setting
considered in this paper.
\end{remark}

As a consequence of our method, we obtain a reduction
(Theorem~\ref{the:main2})
that might be of independent interest.
We first define the following two problems:

\begin{definition}[Compact general decision problem with at most 
$\omega$ quantifier alternations (${\bf GDP_{\C,\omega}^c}$)]
The input and output for this problem are as follows.
\begin{itemize}
\item{\bf{Input.}} 
A sentence $\Phi$
\[    (\mathrm{Q}_1 \X^{1} \in \PP^{k_1}_\C)  \cdots 
(\mathrm{Q}_\omega \X^{\omega} \in \PP_\C^{k_\omega})
\phi(\X^1;\ldots;\X^\omega),
\]
where for each $i, 1 \leq i \leq \omega$,
$\mathrm{Q}_i \in \{\exists,\forall\}$, with $\mathrm{Q}_j \neq \mathrm{Q}_{j+1}, 1 \leq j < \omega$,
and
$\phi$ is a quantifier-free multi-homogeneous formula 
defining a {\em closed}
subset $S$ of $\PP^{k_1}\times\cdots\times\PP^{k_\omega}$. 
\item{\bf{Output.}} 
True or False depending on whether $\Phi$ is true or false.
\end{itemize}
\end{definition}

\begin{definition}[Computing the Poincar\'e polynomial of 
constructible  sets (\textbf{{\em Poincar\'e}})]
The input and output for this problem are as follows.
\begin{itemize}
\item{\bf{Input.}} A quantifier-free homogeneous formula defining a 
constructible  subset $S\subset \PP_\C^k$.
\item{\bf{Output.}} 
The Poincar\'e polynomial $P_S(T)$.
\end{itemize}
\end{definition}

\begin{theorem}
\label{the:main2}
For every  $\omega > 0$, there is a deterministic 
polynomial time reduction in the 
Blum-Shub-Smale model of 
$\bf{GDP_{\C,\omega}^c}$ to \textbf{Poincar\'e}.
\end{theorem}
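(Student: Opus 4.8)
The plan is to proceed by induction on the number of quantifier alternations $\omega$, reducing each alternation to a computation of a Poincar\'e polynomial of an auxiliary constructible set obtained by a \emph{complex join} construction, in the spirit of \cite{BZ09} but with the semi-algebraic joins replaced by joins of quasi-projective complex varieties. The key observation driving the whole argument is that for a \emph{closed} subset $S \subseteq \PP^k_\C$, deciding the $\exists$-sentence ``$S \neq \emptyset$'' is the same as deciding whether $b_0(S) \neq 0$, i.e. whether $P_S(-1)\cdots$—more precisely whether $P_S(0) = b_0(S) > 0$—so a single call to \textbf{Poincar\'e} already handles $\omega = 1$ (the $\exists$ case; the $\forall$ case follows by negation, using the convention for $\neg\Phi$ fixed in the excerpt, which keeps the formula multi-homogeneous and the complementary set constructible, though one must be slightly careful since the complement of a closed set need not be closed—this is exactly the point where one passes to constructible, not closed, inputs of \textbf{Poincar\'e}). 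For the inductive step, given a sentence with quantifier prefix $\mathrm{Q}_1 \cdots \mathrm{Q}_\omega$, the goal is to construct from the closed set $S \subseteq \PP^{k_1}_\C \times \cdots \times \PP^{k_\omega}_\C$ a new constructible set $S'$, of size polynomially bounded in the input, together with an integer $j$ computable in polynomial time, such that $b_j(S')$ (read off from $P_{S'}$) determines the truth of the innermost two quantifiers $\mathrm{Q}_{\omega-1}\mathrm{Q}_\omega$, thereby collapsing two alternations into one \textbf{Poincar\'e} oracle call and leaving a sentence with $\omega - 2$ alternations to which induction applies.

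The heart of the construction is a complex analogue of the topological-join trick of \cite{BZ09}: if $X \to Y$ is a family of closed sets over a parameter space, then the fibrewise $p$-fold join $X^{*p}_Y \to Y$ has the property that its fibres are $(p-2)$-connected over points where $X_y \neq \emptyset$ and are empty (or a join of empty sets, hence empty) over points where $X_y = \emptyset$; iterating joins lets one amplify the gap between ``nonempty'' and ``empty'' in a controlled homological degree, which is precisely what converts an alternating quantifier into a question about a single Betti number of the total space. I would first set up the complex join of quasi-projective varieties carefully (this is promised as ``a key role'' in the abstract): for $X \subseteq \PP^a_\C$ and $Y \subseteq \PP^b_\C$ one takes the appropriate locus in $\PP^{a+b+1}_\C$ swept out by lines joining $X$ and $Y$, verify it is constructible (closed when $X, Y$ are closed and disjoint in the ambient join), establish the homotopy/homology properties analogous to the topological join $S^{p-1}$-like connectivity, and check that the construction is uniform and polynomial-time in the Blum--Shub--Smale sense (the defining formula of the join has polynomially bounded size). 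Then I would interleave this with the quantifier structure: an $\exists \X^\omega$ over a closed set becomes a join that fills in homology in degree $0$, and a $\forall \X^{\omega-1}$ becomes, via the negation convention and a complementation, a dual statement, the composite of the two being encoded in a single Betti number of the iterated join — this is where the ``Descartes rule''/Mayer--Vietoris-type bookkeeping from \cite{BZ09} gets transcribed to the complex setting.

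The main obstacle, and the reason the complex case genuinely differs from \cite{BZ09}, is the handling of the universal quantifier (and hence of complementation) without inequalities. Over the reals one can thicken a closed set to an open neighbourhood using $\eps$-inequalities and pass to complements cleanly; over $\C$ the Zariski complement of a closed set is open but not closed, and there is no semi-algebraic thickening available, so one cannot directly keep everything inside the class of closed sets that \textbf{Poincar\'e} is happiest with. I expect the fix to require either (i) working with the constructible (locally closed) sets directly and controlling their Betti numbers through the join construction — which is legitimate since \textbf{Poincar\'e} as defined takes arbitrary quantifier-free homogeneous formulas, hence arbitrary constructible inputs — together with a careful additivity/Mayer--Vietoris argument relating $P$ of a constructible set to $P$ of its closure and boundary strata, or (ii) a duality device (Alexander duality in projective space, or the long exact sequence of the pair) to trade a $\forall$ over $S$ for an $\exists$ over a complementary constructible set while tracking the degree shift. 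Getting the connectivity estimates for the \emph{complex} join to be strong enough, and uniform enough across the parameter fibres, to make the two-alternations-into-one-Betti-number collapse go through — in particular ensuring the relevant Betti number is genuinely $0$ in the ``false'' case and genuinely nonzero in the ``true'' case, with no interference from homology of the parameter space — is the technical crux; once that is in hand, the induction on $\omega$ and the polynomial-time bookkeeping (polynomially many blocks, polynomially bounded block sizes, as permitted by Remark~\ref{rem:padding}) are routine.
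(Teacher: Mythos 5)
The high-level strategy you describe --- replace the image of a projection by a fibered join that preserves enough homology, handle $\forall$ via negation and duality, induct on $\omega$ --- is indeed the paper's strategy, and you correctly identify Alexander duality in projective space as the right device for the universal quantifier. However, there is a genuine gap in the central topological claim, and a secondary structural mismatch.

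The gap is in your description of the complex join's homology. You write that the $p$-fold fibrewise join has $(p-2)$-connected fibres over points where the fibre is nonempty, and that one should ``establish the homotopy/homology properties analogous to the topological join $S^{p-1}$-like connectivity.'' This is the real semi-algebraic statement, and it is \emph{false} for the complex (projective) join. The paper's Proposition~\ref{prop:equivalence} shows that for non-empty $X \subset \PP^k_\C$, the iterated complex join $J^p_\C(X) \subset \PP^{(p+1)(k+1)-1}_\C$ is \emph{not} highly connected; rather the inclusion into the ambient projective space is a homology isomorphism in degrees $< p$, so $\HH_i(J^p_\C(X)) \cong \Q$ for \emph{every even} $i < p$, not just $i=0$. (This is proved by lifting along the Hopf fibration, where the lifted join is the ordinary topological join and hence highly connected, then descending through the $\Sphere^1$-bundle spectral sequence.) Consequently the Leray spectral sequence of $\mathrm{pr}_1^{(p)} : J^p_{\C,\mathrm{pr}_1}(A) \to \mathrm{pr}_1(A)$ does \emph{not} degenerate as in the Vietoris--Begle setting; instead one gets Theorem~\ref{the:compactcovering}, $P_{\mathrm{pr}_1(A)} = (1-T^2)\,P_{J^p_{\C,\mathrm{pr}_1}(A)} \bmod T^p$, whose $(1-T^2)$ correction factor is invisible in your outline and is precisely what makes the complex case differ from \cite{BZ09}. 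Your plan of ``ensuring the relevant Betti number is genuinely $0$ in the false case and genuinely nonzero in the true case, with no interference from homology of the parameter space'' would not go through as written, because there \emph{is} interference (the factor $(1-T^2)^N$ in Theorem~\ref{the:compactcovering_general}), and controlling it requires tracking the full Poincar\'e polynomial (in fact the pseudo-Poincar\'e polynomial $Q_S$) rather than a single Betti number.

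This leads to the secondary mismatch: you propose collapsing two alternations at a time into one Betti-number query, iterated down the prefix, which would mean $\Theta(\omega)$ oracle calls with the collapse performed between calls. The paper instead eliminates one quantifier block per inductive step while maintaining, \emph{symbolically}, a polynomial-time-computable map $F_n$ such that $Q_{\RR(\Phi_n(\x;\cdot))} = F_n(Q_{\RR(\Theta_n(\x;\cdot))})$; it accumulates all the $(1-T)^{\alpha(n)}$, truncation and reciprocal operators into $F_n$, and makes a \emph{single} oracle call at the very end (Proposition~\ref{prop:main0}, with the $\forall$ step handled via Corollary~\ref{cor:alexanderduality_general}). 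There is also a technical point you do not address: after eliminating a quantifier, adjacent same-type blocks cannot simply be merged because $\PP^{k_1}_\C \times \PP^{k_2}_\C$ is not a projective space; this forces the paper to prove the more elaborate Proposition~\ref{prop:main}, in which the conjunctions/disjunctions indexed by the tuples $(i_0,\ldots,i_\sigma)$ accumulate across inductive steps. Your proposal would run into the same issue without realising it.
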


\begin{remark}
We remark that (in contrast to the real case)
in the complex case, we are able to prove a slightly stronger result
than stated above in Theorem \ref{the:main2}.
Our proof of  Theorem \ref{the:main2} gives a polynomial time
reduction of $\bf{GDP_{\C,\omega}^c}$ to the problem of computing the 
{\em pseudo-Poincar\'e} polynomial 
(defined below, see Eqn. \ref{def:pseudo-Poincarepolynomial})
of constructible sets. The pseudo-Poincar\'e polynomial is easily computable
from the Poincar\'e polynomial.
\end{remark}

\subsection{Outline of the main ideas and contributions}
The basic idea behind the proof of a real analogue of Toda's theorem
in \cite{BZ09}
is a topological construction, which 
given a semi-algebraic set $X \subset \R^{m} \times \R^{n}$,  $p \geq 0$, 
and $\mathrm{pr}_1: \R^{m} \times \R^n \subset \R^n$ the projection
on $\R^m$
constructs
{\em efficiently} a semi-algebraic set, $D^p(X)$, such that 
\begin{equation}\label{eq:D}
b_i(\mathrm{pr}_1(X)) = b_i(D^p(X)), \quad 0 \leq i < p.
\end{equation}
Moreover, membership in $D^p(X)$ can be tested efficiently if the same is
true for $X$. Note that this last property will not hold in general for the set
$\mathrm{pr}_1(X)$ itself (unless of course $\mathbf{P}_\R = \mathbf{NP}_\R$).

The topological construction used in the definition of $D^p(X)$
in \cite{BZ09} is the iterated fibered join,
$J^p_{\mathrm{pr}_1}(X)$, of a 
semi-algebraic set $X$ with itself over a projection map $\mathrm{pr}_1$. 
There is also an induced  surjective map $J^p_{\mathrm{pr}_1}(X)\rightarrow \mathrm{pr}_1(X)$
which we denote by $\mathrm{pr}_1^{(p)}$. 
The fibers of
this induced map $\mathrm{pr}_1^{(p)}: J^p_{\mathrm{pr}_1}(X) \rightarrow \mathrm{pr}_1(X)$, 
over a point 
$\x \in \mathrm{pr}_1(X)$,
are then ordinary  $(p+1)$-fold joins of the fiber 
$(\mathrm{pr}_1^{(p)})^{-1}(\x)$, and by 
connectivity properties of the join are $p$-connected. It is now possible
to prove
using a version of the 
Vietoris-Beagle theorem that the map $\mathrm{pr}_1^{(p)}$ is a 
$p$-equivalence (see \cite{BZ09} for the precise definition
of $p$-equivalence). The main construction in \cite{BZ09} was to realize 
efficiently the fibered join $J^p_{\mathrm{pr}_1}(X)$ up to homotopy by a 
semi-algebraic set. 
This construction however is semi-algebraic in nature, i.e. it uses real
inequalities in an essential way and thus does not generalize in a 
straightforward way to the complex case. Thus, a different construction is 
needed in the complex case.

In the complex case, the role of the fibered join is played by the 
{\em complex join fibered over a projection 
$\mathrm{pr}_1: \C^{m} \times \C^n \rightarrow \C^m$}
defined below (see Definition \ref{def:joinoveramap1}). 
The fibers of the $(p+1)$-fold
complex join fibered over a projection $\mathrm{pr}_1$,
$J^p_{\C}(X)$, of a compact constructible set $X$ are not
quite $p$-connected as in the real case, but are reasonably nice -- namely 
they are homologically equivalent to a projective space of dimension $p$
(see Proposition \ref{prop:equivalence}). 
This
allows us to relate the Poincar\'e polynomial of $X$ with that of its 
image $\mathrm{pr}_1(X)$, 
even though the relation is not as straightforward as in the real
case (see Theorem \ref{the:compactcovering} below).

We remark that Theorem  \ref{the:compactcovering} can be used to express
directly the Betti numbers of the image under projection of a 
projective variety in terms of those another projective variety
obtained directly without having to perform effective quantifier
elimination (which has exponential complexity).
The description of this second variety is {\em much simpler and 
algebraic} in nature compared
to the one used in \cite{BZ09} in the real semi-algebraic case, 
and thus might be of independent interest. 
Theorem  \ref{the:compactcovering}  can also be viewed as an 
improvement over the descent spectral sequence argument used in \cite{GVZ04}
to bound the Betti numbers of  projections (of semi-algebraic sets) in the
complex projective case. 
A similar construction using the projective join is also available in the
real case (using $\Z/2\Z$ coefficients) but we omit its description
in the current paper.

Finally, we believe that 
the compact projective versions of the complex complexity classes introduced
in this paper deserve further investigations on their own 
(see Remark \ref{rem:compact_classes} below), 
since many numerical algorithms for computing solutions of  complex polynomial
systems assume some form of compactness
(see, for instance, \cite{Shub-Smale93, Beltran-Pardo09, buergisser-2009}).

The rest of the paper is organized as follows. In Section \ref{sec:ingredients}
we state and prove the necessary ingredients from algebraic topology
needed to prove the main theorems.
In Section \ref{sec:proof} we prove the main results of the paper.
Finally, in Section \ref{sec:future}, we pose some open problems and
discuss possible extensions to the current work. 
\section{Topological Ingredients}
\label{sec:ingredients}

In this section we state and prove the main topological ingredients
necessary for the proof of the main theorems.

\subsection{Alexander-Lefschetz duality}
We will need the classical Alexander-Lefschetz duality theorem
in order to relate the Betti numbers of 
a 
closed
constructible subset 
$S \subset \PP_\C^{k_1} \times \cdots \times \PP_\C^{k_\ell}$ with those
of its complement
$\PP_\C^{k_1} \times \cdots \times \PP_\C^{k_\ell} \setminus S$.

\begin{theorem}[Alexander-Lefschetz duality]
\label{the:alexanderduality_general}
Let $S \subset \PP_\C^{k_1} \times \cdots \times \PP_\C^{k_\ell}$ be a 
closed
constructible subset.
Then for each odd $i$, $1\leq i \leq 2k+1$ with
$k = k_1 + \cdots + k_\ell$, we have that
\begin{equation}
b_{i-1}(S) - b_{i-2}(S) = b_{2k -i}(\PP_\C^{k_1} \times \cdots 
\times \PP_\C^{k_\ell} - S) - 
b_{2k -i+1 }(\PP_\C^{k_1} \times \cdots \times \PP_\C^{k_\ell} - S) + 
b_{i-1}(\PP_\C^{k_1} \times \cdots \times \PP_\C^{k_\ell}).  
\end{equation}
\end{theorem}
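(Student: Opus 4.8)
The statement is a cohomological reformulation of classical Alexander–Lefschetz duality applied to the compact oriented manifold $M = \PP_\C^{k_1} \times \cdots \times \PP_\C^{k_\ell}$, which has (real) dimension $2k$ with $k = k_1 + \cdots + k_\ell$, together with the fact that the odd Betti numbers of $M$ vanish. The idea is to start from the Alexander duality isomorphism $\HH_j(M, M\setminus S) \cong \HH^{2k-j}_c(S)$ (or, since everything here is compact, $\HH^{2k-j}(S)$), combine it with the long exact sequence of the pair $(M, M\setminus S)$, and then take alternating combinations of consecutive terms to absorb the maps into differences of Betti numbers.

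More precisely, first I would invoke Alexander–Lefschetz duality (see \cite{Spanier}): since $M$ is a closed oriented topological manifold of dimension $2k$ and $S \subset M$ is a compact (here Zariski-closed, hence closed) subset, there is an isomorphism $\HH^{i}(S;\Q) \cong \HH_{2k-i}(M, M\setminus S;\Q)$ for all $i$; by the universal coefficient remark (Remark~\ref{rem:coefficients}) the ranks of $\HH^i(S)$ and $\HH_i(S)$ agree, so $b_i(S) = \dim_\Q \HH_{2k-i}(M, M\setminus S;\Q)$. Next I would write the long exact homology sequence of the pair $(M, M\setminus S)$:
\[
\cdots \to \HH_{j}(M\setminus S) \to \HH_j(M) \to \HH_j(M, M\setminus S) \to \HH_{j-1}(M\setminus S) \to \cdots
\]
From exactness, for each $j$ one gets the relation $\dim \HH_j(M,M\setminus S) = \dim\operatorname{coker}\big(\HH_j(M\setminus S)\to \HH_j(M)\big) + \dim\ker\big(\HH_{j-1}(M\setminus S)\to \HH_{j-1}(M)\big)$. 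The standard trick is then to pass to the telescoping alternating sum over a short window of indices so that the kernel/cokernel terms of the inclusion-induced maps cancel in pairs, leaving only Betti numbers of $M\setminus S$, of $M$, and of $S$ (via the duality identification). Writing everything in terms of $j = 2k - (i-1)$ and using that $\HH_{\mathrm{odd}}(M) = 0$ (the product of complex projective spaces has only even-degree homology, by the Künneth formula and the known homology of $\PP^n_\C$), the odd Betti numbers of $M$ drop out, $b_{i-1}(M)$ being the only projective-space contribution that survives (it appears because $2k-(i-1)$ is even when $i$ is odd), and one lands exactly on the displayed identity.

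The one point requiring a little care — and the main obstacle — is the bookkeeping of indices and the precise choice of the window over which to telescope, so that exactly the two "$-$" differences $b_{2k-i}(M\setminus S) - b_{2k-i+1}(M\setminus S)$ and $b_{i-1}(S) - b_{i-2}(S)$ emerge with the correct signs, and so that all intermediate inclusion-induced maps genuinely cancel rather than leaving residual rank terms. This is where the hypothesis that $i$ is odd (equivalently $2k-i$ is odd) is used: it guarantees that the homology of $M$ in the relevant degree ($2k-i$, which is odd) vanishes, so the relevant maps $\HH_{2k-i}(M\setminus S)\to \HH_{2k-i}(M)$ is forced to have trivial cokernel, which is precisely what makes the cancellation go through. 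I would organize the computation by fixing the odd index $i$, considering the segment of the long exact sequence in homological degrees $2k-i-1, 2k-i, 2k-i+1$, computing the relevant exactness relations, and then substituting the duality identifications $b_{i-1}(S) = \dim\HH_{2k-i+1}(M,M\setminus S)$ and $b_{i-2}(S) = \dim\HH_{2k-i+2}(M,M\setminus S)$; keeping the arithmetic straight is routine but must be done explicitly to pin down the signs.
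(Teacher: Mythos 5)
Your proposal is correct and rests on the same two ingredients the paper uses: Alexander--Lefschetz duality for a compact subset of the closed oriented $2k$-manifold $M = \PP_\C^{k_1}\times\cdots\times\PP_\C^{k_\ell}$, the long exact homology sequence of a pair, and the vanishing of $\HH_{\mathrm{odd}}(M)$. The only genuine difference is which form of the duality you choose: the paper applies the version $\HH_i(M\setminus S)\cong\HH^{2k-i}(M,S)$ and then runs the long exact sequence of the pair $(M,S)$, whereas you apply the dual version $\HH^i(S)\cong\HH_{2k-i}(M,M\setminus S)$ and run the long exact sequence of the pair $(M,M\setminus S)$. The two are mirror images of one another (swap $S\leftrightarrow M\setminus S$ and $j\mapsto 2k-j$), and both land on the stated identity by taking the alternating sum of ranks in the resulting five-term exact sequence $0\to\HH_{j+1}(M,M\setminus S)\to\HH_j(M\setminus S)\to\HH_j(M)\to\HH_j(M,M\setminus S)\to\HH_{j-1}(M\setminus S)\to 0$ with $j=2k-i+1$; the paper's choice goes slightly more directly to $b_{i-1}(M)$, while yours produces $b_{2k-i+1}(M)$ and needs the (trivial, palindromic) Poincar\'e duality of $M$ to identify it with $b_{i-1}(M)$. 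One small index slip: the window you need is degrees $2k-i,\,2k-i+1,\,2k-i+2$, not $2k-i-1,\,2k-i,\,2k-i+1$ as you wrote; since $2k-i$ and $2k-i+2$ are both odd, $\HH_*(M)$ vanishes at both ends, which is what truncates the long exact sequence to the five-term piece above. You flagged this bookkeeping yourself, and it is indeed the only place where care is required.
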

\begin{proof}
Lefshetz duality theorem \cite{Spanier} gives for each $i, 0 \leq i \leq 2k$,
\[
b_i(\PP_\C^{k_1} \times \cdots \times \PP_\C^{k_\ell} - S) = 
b_{2k-i}(\PP_\C^{k_1} \times \cdots \times \PP_\C^{k_\ell}, S).
\]
The theorem now follows from the long exact sequence of homology,
\[
\cdots \rightarrow \HH_i(S) \rightarrow \HH_i(\PP_\C^{k_1} \times \cdots 
\times \PP_\C^{k_\ell}) \rightarrow 
\HH_i(\PP_\C^{k_1} \times \cdots \times \PP_\C^{k_\ell},S) 
\rightarrow \HH_{i-1}(S) \rightarrow \cdots
\] 
after noting that $\HH_i(\PP_\C^{k_1} \times \cdots \times 
\PP_\C^{k_\ell}) = 0,$ for all $i \neq 0,2,4,\ldots,2k$. 
\end{proof}

For technical reasons (see Corollary \ref{cor:alexanderduality_general} below) 
we need to consider
the even and odd parts of the Poincar\'e polynomial of constructible sets.
 
Given $P  = \sum_{i \geq 0} a_i T^i \in \Z[T]$, we write
\[
P \defeq P^{{\mathrm{even}}}(T^2) + T P^{{\mathrm{odd}}}(T^2),
\]
where 
\[
P^{{\mathrm{even}}}(T) = \sum_{i \geq 0} a_{2i} T^i,
\]
and
\[
P^{{\mathrm{odd}}}(T) = \sum_{i \geq 0} a_{2i+1} T^i.
\]

We introduce for any
$S \subset \PP^n_\C$, a 
related polynomial, $Q_S(T)$, which we call
the \bfdef{pseudo-Poincar\'e polynomial} of $S$
defined as follows.

\begin{equation}
\label{def:pseudo-Poincarepolynomial}
Q_S(T)\; \defeq \; \sum_{j \geq 0} (b_{2j}(S) - b_{2j-1}(S))T^j.
\end{equation}

In other words,

\begin{equation}
\label{def:pseudo-Poincarepolynomial2}
Q_S =  P^{{\mathrm{even}}}_S - T P^{{\mathrm{odd}}}_S. 
\end{equation}

We introduce below notation for several operators on polynomials that 
we will use later.

\begin{notation}[Operators on polynomials]
\label{not:poloperators}
For any polynomial $Q = \sum_{i\geq 0} a_i T^i\in \Z[T]$ with 
$\deg(Q) \leq n$, we will denote by:
\begin{enumerate}
\item
$\Rec_n(Q)$ the polynomial $T^n Q(\frac{1}{T})$;
\item
for $0 \leq m \leq n$, 
$\Trunc_m(Q)$ the polynomial $\sum_{0 \leq i \leq m} a_i T^i\in \Z[T]$; and,
\item
$M_P(Q)$ the polynomial $P Q$, for any polynomial $P \in \Z[T]$.
\end{enumerate}
\end{notation}

\begin{remark}
\label{rem:poloperators}
Notice that all the operators introduced above are computable in polynomial
time. 
\end{remark}

Using the notation introduced above
we have the following easy corollary of Theorem 
\ref{the:alexanderduality_general}.

\begin{corollary}
\label{cor:alexanderduality_general}
Let $S \subset \PP_\C^{k_1} \times \cdots \times \PP_\C^{k_\ell}$ 
be 
any  
closed (resp. open)
constructible subset,
and $k = k_1 + \cdots + k_\ell$.
Then,
$$
\displaylines{
Q_S = Q_{\PP_\C^{k_1} \times \cdots \times \PP_\C^{k_\ell}}
- \Rec_k(Q_{\PP_\C^{k_1} \times \cdots \times \PP_\C^{k_\ell} - S}).
}
$$
\end{corollary}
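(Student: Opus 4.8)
The goal is to derive the stated identity from the Alexander–Lefschetz duality relation of Theorem~\ref{the:alexanderduality_general} by repackaging it in terms of the pseudo-Poincar\'e polynomial $Q$. The key observation is that the quantity $b_{i-1}(S) - b_{i-2}(S)$ appearing on the left of that theorem (for $i$ odd) is, up to reindexing, exactly a coefficient of $Q_S$: writing $i-1 = 2j$, we get $b_{2j}(S) - b_{2j-1}(S)$, which is the coefficient of $T^j$ in $Q_S$ by definition~\eqref{def:pseudo-Poincarepolynomial}. Similarly the right-hand side combination $b_{2k-i}(M) - b_{2k-i+1}(M)$ with $M = \PP_\C^{k_1}\times\cdots\times\PP_\C^{k_\ell} - S$ becomes, with $2k-i = 2k - (2j+1) = 2(k-j-1)+1$, the quantity $-\bigl(b_{2(k-j-1)}(M) - b_{2(k-j-1)-1}(M)\bigr)$, i.e. minus the coefficient of $T^{k-j-1}$ in $Q_M$. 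Finally $b_{i-1}(\PP_\C^{k_1}\times\cdots\times\PP_\C^{k_\ell}) = b_{2j}(\PP_\C^{k_1}\times\cdots\times\PP_\C^{k_\ell})$ is the coefficient of $T^j$ in $Q_{\PP_\C^{k_1}\times\cdots\times\PP_\C^{k_\ell}}$ (here all odd Betti numbers vanish, so $Q$ of the ambient product is just its even-part generating polynomial).

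Concretely, I would first record the elementary fact that $Q_S = \sum_{j\geq 0}\bigl(b_{2j}(S)-b_{2j-1}(S)\bigr)T^j = P_S^{\mathrm{even}} - T P_S^{\mathrm{odd}}$, and then read off the coefficient of $T^j$ on both sides of the desired identity. On the left it is $b_{2j}(S) - b_{2j-1}(S)$. On the right, $Q_{\PP_\C^{k_1}\times\cdots\times\PP_\C^{k_\ell}}$ contributes $b_{2j}(\PP_\C^{k_1}\times\cdots\times\PP_\C^{k_\ell})$, and $\Rec_k(Q_M)$ contributes the coefficient of $T^j$ in $T^k Q_M(1/T)$, which is the coefficient of $T^{k-j}$ in $Q_M$, namely $b_{2(k-j)}(M) - b_{2(k-j)-1}(M)$. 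So the claimed coefficient identity is
\[
b_{2j}(S) - b_{2j-1}(S) = b_{2j}(\PP_\C^{k_1}\times\cdots\times\PP_\C^{k_\ell}) - \bigl(b_{2(k-j)}(M) - b_{2(k-j)-1}(M)\bigr),
\]
and this is precisely Theorem~\ref{the:alexanderduality_general} applied with $i = 2j+1$ (odd, and in the range $1\leq i\leq 2k+1$), after rearranging the terms $b_{2k-i}(M) - b_{2k-i+1}(M) = b_{2(k-j-1)}(M) - b_{2(k-j-1)+1}(M)$ and noting $2(k-j-1)+1 = 2(k-j)-1$, $2(k-j-1) \le 2(k-j)$. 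Thus $k-j$ and $k-j-1$ index the same adjacent pair.

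The main obstacle — and really the only thing requiring care — is bookkeeping at the boundary: matching the index ranges so that every coefficient of $Q_S$, for $j = 0,1,\dots,k$, is actually covered by an instance of Theorem~\ref{the:alexanderduality_general}, and checking that coefficients of $T^j$ for $j$ outside $[0,k]$ vanish on both sides (they do, since $S \subset \PP_\C^{k_1}\times\cdots\times\PP_\C^{k_\ell}$ has dimension at most $2k$, so $b_m(S) = 0$ for $m > 2k$, hence the $T^j$-coefficient of $Q_S$ vanishes for $j > k$, and likewise $\Rec_k$ produces a polynomial of degree $\le k$). For the open case, one applies the closed case to the closed complement $\PP_\C^{k_1}\times\cdots\times\PP_\C^{k_\ell} - S$ and observes that the asserted identity is symmetric under swapping $S$ with its complement (since $Q_{\PP_\C^{k_1}\times\cdots\times\PP_\C^{k_\ell}}$ has only even-degree terms and is fixed by $\Rec_k$, the relation $Q_S + \Rec_k(Q_M) = Q_{\PP_\C^{k_1}\times\cdots\times\PP_\C^{k_\ell}}$ is self-dual), so no separate argument is needed. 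Everything else is a direct substitution into the already-proved duality theorem.
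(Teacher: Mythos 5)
Your approach is the natural (and surely intended) one, and the final verification is correct: set $i=2j+1$ in Theorem~\ref{the:alexanderduality_general}, observe $2k-i=2(k-j)-1$ and $2k-i+1=2(k-j)$, so the right-hand side becomes $-\bigl(b_{2(k-j)}(M)-b_{2(k-j)-1}(M)\bigr)+b_{2j}(\text{ambient})$, matching coefficients with $Q_S = Q_{\text{ambient}}-\Rec_k(Q_M)$ for $0\le j\le k$, with all higher-$j$ coefficients vanishing for dimension reasons. The reduction of the open case to the closed one by applying $\Rec_k$ to both sides and using $\Rec_k\circ\Rec_k=\mathrm{id}$ (on polynomials of degree $\le k$) together with $\Rec_k(Q_{\text{ambient}})=Q_{\text{ambient}}$ (Poincar\'e duality for the ambient product) is also correct.

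However, you have one genuine slip in the first index-matching paragraph: you claim that $b_{2k-i}(M)-b_{2k-i+1}(M)$ equals $-\bigl(b_{2(k-j-1)}(M)-b_{2(k-j-1)-1}(M)\bigr)$, i.e.\ minus the coefficient of $T^{k-j-1}$ in $Q_M$. This is wrong: with $i=2j+1$ one has $2k-i = 2(k-j)-1$, not $2(k-j-1)$, so the pair $\{2k-i,\,2k-i+1\}=\{2(k-j)-1,\,2(k-j)\}$ yields $b_{2k-i}(M)-b_{2k-i+1}(M)=-\bigl(b_{2(k-j)}(M)-b_{2(k-j)-1}(M)\bigr)$, i.e.\ minus the coefficient of $T^{k-j}$ (not $T^{k-j-1}$) in $Q_M$. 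The same off-by-one error reappears in the sentence ``after rearranging the terms $b_{2k-i}(M)-b_{2k-i+1}(M)=b_{2(k-j-1)}(M)-b_{2(k-j-1)+1}(M)$,'' which is not an identity; the correct rewriting is $b_{2k-i}(M)-b_{2k-i+1}(M)=b_{2(k-j)-1}(M)-b_{2(k-j)}(M)$. Fortunately your second paragraph computes the coefficient of $T^j$ in $\Rec_k(Q_M)$ correctly as the coefficient of $T^{k-j}$ in $Q_M$, and the displayed coefficient identity you actually verify against the theorem is the right one, so the argument goes through once the earlier paragraph is corrected to agree with it.
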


\subsection{The complex join of subsets of complex projective spaces}
We first give a purely geometric definition of the complex join
of two sets followed by one using co-ordinates.
The geometric definition is useful in understanding the topological
properties of the join proved later. The definition involving
co-ordinates and formulas is necessary for the 
complexity theoretic arguments.

Let $V,W$ be finite dimensional vector spaces over $\C$ and 
let $X \subset \PP(V)$ and $Y \subset \PP(W)$ be two arbitrary 
(not necessarily constructible) 
subsets. Note that 
$\PP(V) \cong \PP(V\oplus \mathbf{0}) \subset \PP(V\oplus W)$ 
and $\PP(W) \cong \PP(\mathbf{0}\oplus W) \subset \PP(V\oplus W)$ are two
disjoint subspaces of $\PP(V \oplus W)$ and thus $X$ and $Y$ are 
embedded as disjoint subsets of $\PP(V\oplus W)$.
With the above notation

\begin{definition}[Geometric definition of complex join]
\label{def:complexjoin_geometric}
The \bfdef{complex join}, \bfdef{$J_\C(X,Y)$}, is 
defined to be the union of projective
lines in $\PP(V\oplus W)$ which meets both $X$ and $Y$ if $X$ and
$Y$ are both non-empty. We let  $J_\C(X,Y) = Y$ if $X$ is empty,
and  $J_\C(X,Y) = X$ if $Y$ is empty.
\end{definition}

We now give a definition of the complex join which 
involve co-ordinates which we are going to use in this paper. 

Let $X \subset \PP^k_\C$ and $Y \subset \PP^\ell_\C$ be two constructible
sets defined by homogeneous formulas $\Phi(X_0,\ldots,X_k)$ and 
$\Psi(Y_0,\ldots,Y_\ell)$
respectively, where $(X_0:\cdots:X_k)$ (respectively $(Y_0:\cdots:Y_\ell)$)
are homogeneous co-ordinates in $\PP^k_\C$ (respectively $\PP^\ell_\C$).

\begin{definition}[Complex join in terms of co-ordinates]
\label{def:complexjoin}
The complex join, $J_\C(X,Y)$, of $X$ and $Y$ is 
the constructible subset of $\PP^{k+\ell+1}_\C$ defined by the formula
\bfdef{
\[
J_\C(\Phi,\Psi) \defeq 
\phi(Z_0,\cdots,Z_k) \wedge \psi(Z_{k+1},\cdots,Z_{k+\ell+1}),
\]
}
where $(Z_0:\cdots:Z_{k+\ell+1})$ are  homogeneous coordinates
in $\PP^{k+\ell+1}_\C$.
\end{definition}

\begin{remark}
\label{rem:empty}
Firstly, notice that 
the realization, $\RR(J_\C(\Phi,\Psi))$,
does not depend on the formulas
$\phi$ and $\psi$ used to define $X$ and $Y$ respectively.
Also, notice that 
if $X$ and $Y$ are {\em both} empty then so is $J_\C(X,Y)$. 
Indeed, if $X = \emptyset$ (respectively, $Y = \emptyset$) then $J_\C(X,Y)$ is
isomorphic to $Y$ (respectively, $X$). 
To see this notice that
by definition (cf. Definition \ref{def:multi-homogeneous}) 
the homogeneous formula
$\Phi(\X)$ (resp. $\Psi(\Y)$) is true whenever  $\X$ (resp. $\Y$) is the 
$\mathbf{0}$-vector.
Now consider the following two constructible subsets of $\PP_\C^{k+\ell+1}$.
$$
\displaylines{
\tilde{X} = \{(\x:0:\cdots:0)\; \mid \;\x \in X \}, \cr
\tilde{Y} = \{(0:\cdots:0:\y)\; \mid \;\y \in Y \}.
}
$$

We have that $\tilde{X}$ (resp. $\tilde{Y}$) is isomorphic to $X$ 
(resp. $Y$). Moreover $\tilde{X}$ and $\tilde{Y}$ are
contained in $J_\C(X,Y)$, 
since as remarked earlier
$\Psi(\Y)$ (resp. $\Phi(\X)$) is true whenever $\Y$ (resp. $\X$) is the 
$0$-vector.
Moreover, $\tilde{X}$ (resp. $\tilde{Y}$) is equal to $J_\C(X,Y)$ 
in case $Y$ (resp. $X$) is empty.

Also, clearly $\tilde{X}$ and $\tilde{Y}$ are disjoint, and
if $X$ and $Y$ are both non-empty then, 
$J_\C(X,Y)$ is obtained by 
taking the union of projective lines in $\PP^{k+\ell+1}_\C$
meeting both $\tilde{X}$ and $\tilde{Y}$.
\end{remark}

\begin{example}
\label{eg:joinoftwoprojectives}
It is easy to check from the above definition that
the join, $J_\C(\PP^k_\C,\PP^\ell_\C)$, of two projective spaces
is again a projective space, namely $\PP^{k+\ell+1}_\C$.
\end{example}

\begin{remark}
The projective join as defined above is a classical object in algebraic
geometry. Amongst many other applications, 
the complex suspension of a projective
variety $X$ (i.e. the complex join 
$J_\C(X,\PP^0_\C)$)
plays an important role in defining
Lawson homology of projective varieties \cite{Lawson}.
Within the area of computational complexity theory,
the projective join of a variety with a point was used in
\cite{Scheiblechner07} for proving hardness of the problem of
computing Betti numbers of complex varieties. 
\end{remark}

\begin{definition}
\label{def:p-foldcomplexjoin}
For 
$p \geq 0$, 
we denote by \bfdef{$J_\C^p(X)$} the $(p+1)$-fold iterated
complex join of $X$ with itself.

More precisely,
$$
\displaylines{
 J_\C^0( X) := X, \cr
 J_\C^{p+1}(X) := J_\C(J_\C^{p}(X),X), \mbox{ for } p \geq 1.
}
$$

If $X \subset \PP^k_\C$ is defined by a first-order homogeneous formula 
$\Phi(X_0,\ldots,X_k)$, 
then 
$J^p_\C(X) \subset \PP^{(p+1)(k+1) -1 }_\C$ is defined by the homogeneous formula
$$
\displaylines{
J^p_\C(\Phi)(X^0_0,\ldots,X^0_k,\ldots,X^p_0,\ldots,X^p_{k}) \defeq 
\bigwedge_{i=0}^{p}\phi(X^i_0,\ldots,X^i_k).
}
$$
where $(X^0_0:\cdots:X^p_{k})$ are homogeneous co-ordinates in
$\PP^{(p+1)(k+1) -1 }_\C$.
\end{definition}

Note that by  Remark \ref{rem:empty}, if $X$ is empty then $J^p_\C(X)$ is empty
for every $p \geq  0$.

\subsection{Properties of the topological join}

We also need to introduce the \bfdef{topological join}
of two spaces. The following is 
classical.

\begin{definition}
\label{def:twofoldjoin}
The \bfdef{join}, \bfdef{$X*Y$},  
of two topological spaces $X$ and $Y$ is defined by
\begin{equation}
\label{eqn:definitionoftwofoldjoin}
X*Y \defeq X\times Y 
\times \Delta^1/\sim,  
\end{equation}
where
$\Delta^1 = \{(t_0,t_1) \;\mid\; t_0,t_1 \geq 0, t_0+t_1=1\}$ 
denotes the standard geometric realization of the $1$-dimensional simplex, and 
\[
(x,y,t_0,t_1) \sim (x',y',t_0,t_1)
\] 
if and only if
$t_0 = 1,x = x'$ or  $t_1=1, y= y'$.
\end{definition}

Intuitively, $X*Y$ is obtained by joining each point of $X$ with
each point of $Y$ by an interval.

We will need the well-known 
fact that the iterated join of a topological space is 
highly connected. In order to make this statement precise we first define

\begin{definition}[$p$-equivalence]
\label{def:p-equivalence}
A map $f: A \rightarrow B$ between two topological spaces is called a 
\bfdef{$p$-equivalence} if the induced homomorphism 
\[
f_*: \HH_i(A) \rightarrow \HH_i(B)
\]
is an isomorphism for all $0 \leq i < p$, and an epimorphism for $i=p$,
and we say that $A$ is \bfdef{$p$-equivalent} to $B$.
\end{definition}
 
The following is well known.
(see, for instance, 
\cite[Proposition 4.4.3]{Matousek_book2}).

\begin{theorem}
\label{the:pjoinconnectivity}
Let $X$ be a 
non-empty
compact semi-algebraic set. Then, the $(p+1)$-fold join
$ \underbrace{{X} * \cdots * {X}}_{(p+1) \mbox{ times}}$ is 
$p$-equivalent to a point.
\end{theorem}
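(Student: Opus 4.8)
The plan is to prove Theorem \ref{the:pjoinconnectivity} by induction on $p$, using the basic connectivity behavior of the join operation together with a Mayer–Vietoris argument (equivalently, the fact that a join is the homotopy colimit of the two projections to the cone points). The base case $p=0$ is trivial: the statement just says $X$ is $0$-equivalent to a point, i.e. $\HH_0(X)$ surjects onto $\HH_0(\mathrm{pt}) = \Q$, which holds because $X$ is non-empty. For the inductive step, write $\underbrace{X*\cdots*X}_{(p+1)} = \left(\underbrace{X*\cdots*X}_{p}\right) * X$ and use the known suspension-type formula for the reduced homology of a join.

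The key computational input is the classical isomorphism
\[
\widetilde{\HH}_n(A*B) \;\cong\; \bigoplus_{i+j = n-1} \widetilde{\HH}_i(A)\otimes\widetilde{\HH}_j(B) \;\oplus\; \bigoplus_{i+j=n-2}\mathrm{Tor}(\widetilde{\HH}_i(A),\widetilde{\HH}_j(B)),
\]
which follows from the Mayer–Vietoris sequence applied to the decomposition of $A*B$ into the two open "halves" (each deformation retracting onto $A$, resp. $B$, with intersection homotopy equivalent to $A\times B$), combined with the Künneth formula. Over the coefficient field $\Q$ the $\mathrm{Tor}$ terms vanish, so $\widetilde{\HH}_n(A*B) \cong \bigoplus_{i+j=n-1}\widetilde{\HH}_i(A;\Q)\otimes_\Q\widetilde{\HH}_j(B;\Q)$. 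Since everything is semi-algebraic and compact, $X$ is triangulable, so these tools apply without set-theoretic pathology. Granting this, if by induction $\underbrace{X*\cdots*X}_{p}$ has $\widetilde{\HH}_i = 0$ for $i < p-1$, then in the formula for $\left(\underbrace{X*\cdots*X}_{p}\right)*X$ every summand $\widetilde{\HH}_i(\underbrace{X*\cdots*X}_p)\otimes\widetilde{\HH}_j(X)$ contributing to degree $n = i+j+1 \leq p$ forces $i \leq p-1$; but $\widetilde{\HH}_{p-1}(\underbrace{X*\cdots*X}_p)\otimes\widetilde{\HH}_0(X)$ can only be nonzero if $j = 0$, i.e. $n = p$, and at degree $n=p$ we only need surjectivity onto $\widetilde{\HH}_p(\mathrm{pt}) = 0$, which is automatic. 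Hence $\widetilde{\HH}_i = 0$ for $i < p$, i.e. the $(p+1)$-fold join is $p$-equivalent to a point.

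The main obstacle — really the only point requiring care — is bookkeeping the index shift correctly and making sure the claim matches the precise definition of $p$-equivalence given in Definition \ref{def:p-equivalence}: one needs an \emph{isomorphism} on $\HH_i$ for $i < p$ and an \emph{epimorphism} on $\HH_p$. For reduced homology of a point these target groups are zero in positive degrees and $\Q$ in degree $0$, so the content is exactly that $\widetilde{\HH}_i(\text{join}) = 0$ for $0 \le i < p$ (and non-emptiness handles degree $0$ and the epimorphism at degree $p$). Alternatively, rather than reproving the join homology formula, one may simply cite \cite[Proposition 4.4.3]{Matousek_book2} as indicated, and restrict the written argument to noting that the semi-algebraic and compactness hypotheses guarantee $X$ is a CW complex (via triangulation), so that the cited topological statement applies verbatim. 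I would present the short Mayer–Vietoris/Künneth induction as the proof, since it is self-contained and makes transparent why the connectivity grows by one with each join factor.
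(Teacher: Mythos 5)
Your proof is correct. The paper itself offers no proof of Theorem \ref{the:pjoinconnectivity}; it merely cites \cite[Proposition 4.4.3]{Matousek_book2} as a well-known fact. Your Mayer--Vietoris/K\"unneth induction is the standard self-contained argument, and the index bookkeeping is right: for the $(p+1)$-fold join and $0 \le n < p$, the formula $\widetilde{\HH}_n(A*B) \cong \bigoplus_{i+j=n-1}\widetilde{\HH}_i(A)\otimes\widetilde{\HH}_j(B)$ over $\Q$ forces $i < p-1$ in every summand, which vanishes by the inductive hypothesis on the $p$-fold join, and the epimorphism at degree $p$ is automatic since $\HH_p(\mathrm{pt}) = 0$ for $p>0$ (and path-connectedness handles $p=0$).

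One small remark on the comparison: the result cited by the paper is stated in terms of \emph{connectivity} (vanishing of homotopy groups $\pi_i$), which is a stronger conclusion and usually proved via Hurewicz or nerve-theoretic arguments. Your argument stays entirely at the level of rational homology, which is exactly what Definition \ref{def:p-equivalence} requires. So your route is, if anything, more economical for the paper's purposes, and the appeal to semi-algebraic triangulability to justify applying singular-homology tools is the right point to flag.
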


We will need a particular property of projection maps that 
we are going to consider later in the paper.

\begin{notation}
For any constructible set $A$, we denote by 
$\K(A)$ the
collection of all compact (in the Euclidean topology) subsets of $A$.
\end{notation}

\begin{definition}
\label{def:compactcovering}
Let $f: A \to B$ be a map between two constructible sets  $A$ and $B$. 
We say that $f$ \bfdef{compact covering} if for any $L\in \K(f(A))$, there
exists $K\in \K(A)$ such that $f(K)=L.$
\end{definition}

\subsection{Topological properties of the complex join}

\begin{proposition}
\label{prop:equivalence}
Let $X \subset \PP^k_\C$ be a 
non-empty  
semi-algebraic
subset and $p > 0$. Let
\[
\iota : J_\C^{p} (X) \hookrightarrow \PP^{(p+1)(k+1)-1}_\C
\] 
denote the inclusion map.
Then the induced homomorphisms
$$
\displaylines{
\iota_*: \HH_j(J_\C^p (X)) \rightarrow \HH_j(\PP^{(p+1)(k+1)-1}_\C) \cr
\iota^*: \HH^j(\PP^{(p+1)(k+1)-1}_\C) \rightarrow \HH^j(J_\C^p (X))
}
$$
are isomorphisms for $0 \leq j < p$.
\end{proposition}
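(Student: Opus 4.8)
The plan is to relate the complex join to the ordinary (topological) join by passing to the tautological circle bundle over projective space. For a subset $Z\subset\PP^M_\C$, let $\widehat{Z}\subset S^{2M+1}$ denote the unit sphere of the affine cone $C(Z)$ (Notation~\ref{not:affinecone}); restricting the Hopf map exhibits $\widehat{Z}\to Z$ as a principal $S^1$-bundle. The starting observation is that, directly from the coordinate description of the complex join (Definition~\ref{def:complexjoin}), $C(J_\C(X,Y))=C(X)\times C(Y)$; intersecting with the unit sphere and using that the unit sphere of a product of complex cones is the topological join of the unit spheres of the factors, one gets $\widehat{J_\C(X,Y)}=\widehat{X}*\widehat{Y}$, compatibly with the (diagonal) $S^1$-actions and with inclusions of $X$ and $Y$. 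Iterating (Definition~\ref{def:p-foldcomplexjoin}) and using associativity of the join,
\[
\widehat{J_\C^p(X)}\;=\;\underbrace{\widehat{X}*\cdots*\widehat{X}}_{p+1}\;\subset\;\underbrace{S^{2k+1}*\cdots*S^{2k+1}}_{p+1}\;=\;S^{2N+1}\;=\;\widehat{\PP^N_\C},\qquad N:=(p+1)(k+1)-1
\]
(cf. Example~\ref{eg:joinoftwoprojectives}), with the lift $\widehat{\iota}$ of $\iota$ being the $(p+1)$-fold join of the inclusion $\widehat{X}\hookrightarrow S^{2k+1}$, hence $S^1$-equivariant.

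Since $X\neq\emptyset$, also $\widehat{X}\neq\emptyset$, and the $(p+1)$-fold topological join of a non-empty space has vanishing reduced $\Q$-homology in degrees $<p$; this is the homological content of Theorem~\ref{the:pjoinconnectivity}, whose compactness hypothesis is not needed here, the statement following by iterating the reduced join formula $\widetilde{\HH}_n(A*B;\Q)\cong\bigoplus_{i+j=n-1}\widetilde{\HH}_i(A;\Q)\otimes\widetilde{\HH}_j(B;\Q)$, valid for all non-empty $A,B$ having the homotopy type of CW complexes (which $\widehat X$ does, being semi-algebraic). Consequently $\widetilde{\HH}_j(\widehat{J_\C^p(X)})=0$ for $j<p$, and in particular $\widehat{\iota}_*\colon\HH_j(\widehat{J_\C^p(X)})\to\HH_j(S^{2N+1})$ is an isomorphism for $j<p$ (both spaces are connected, and both have vanishing homology in degrees $0<j<p$ since $2N+1>p$).

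Now I would run the Gysin sequences of the principal $S^1$-bundles $\widehat{J_\C^p(X)}\to J_\C^p(X)$ and $S^{2N+1}\to\PP^N_\C$ simultaneously. By naturality of the Euler class the former has Euler class $\iota^*h$, where $h\in\HH^2(\PP^N_\C)$ is the hyperplane class, so by the projection formula $\widehat{\iota}_*$ and $\iota_*$ assemble into a commutative ladder whose relevant strip is
\[
\begin{array}{ccccccc}
\HH_j(\widehat{J_\C^p(X)}) & \longrightarrow & \HH_j(J_\C^p(X)) & \xrightarrow{\ \cap h\ } & \HH_{j-2}(J_\C^p(X)) & \longrightarrow & \HH_{j-1}(\widehat{J_\C^p(X)}) \\
\downarrow & & \downarrow & & \downarrow & & \downarrow \\
\HH_j(S^{2N+1}) & \longrightarrow & \HH_j(\PP^N_\C) & \xrightarrow{\ \cap h\ } & \HH_{j-2}(\PP^N_\C) & \longrightarrow & \HH_{j-1}(S^{2N+1})
\end{array}
\]
with verticals $\widehat{\iota}_*,\iota_*,\iota_*,\widehat{\iota}_*$. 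For $2\le j<p$ the two outer verticals are isomorphisms (onto $0$), so $\cap h\colon\HH_j(J_\C^p(X))\to\HH_{j-2}(J_\C^p(X))$ is an isomorphism; and $\cap h\colon\HH_j(\PP^N_\C)\to\HH_{j-2}(\PP^N_\C)$ is an isomorphism throughout $2\le j\le 2N$ (cap with the hyperplane class; note $p\le N$, so this range contains all $j<p$). One then shows $\iota_*$ is an isomorphism on $\HH_j(J_\C^p(X))$ for $0\le j<p$ by induction on $j$: for $j=0$ because $J_\C^p(X)$ is connected (a quotient of the connected space $\widehat{X}^{*(p+1)}$) and $\iota$ maps it into $\PP^N_\C$; for $j=1$ (if $p\ge2$) because both $\HH_1(J_\C^p(X))$ and $\HH_1(\PP^N_\C)$ vanish (Gysin sequence, using $\HH_1(\widehat{J_\C^p(X)})=0$); and for $2\le j<p$ because in the square involving $\iota_*$ on $\HH_j$ the remaining three maps — the two $\cap h$'s and, inductively, $\iota_*$ on $\HH_{j-2}$ — are isomorphisms. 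This proves the homology assertion; the cohomology assertion then follows by $\Q$-duality (Remark~\ref{rem:coefficients}; all the groups in sight are finite-dimensional), or equally well from the parallel cohomology Gysin ladder.

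The one point requiring genuine care is the first: making the identification $\widehat{J_\C^p(X)}=\widehat{X}^{*(p+1)}$ precise — checking that the natural bijection is a homeomorphism (so that the join topology agrees with the subspace topology inherited from $S^{2N+1}$), that the transported $S^1$-action is the diagonal one, and that $\widehat{\iota}$ is indeed the join of the inclusions. Once this structural fact is in hand, everything else is the formal ladder chase above together with the standard connectivity of iterated topological joins.
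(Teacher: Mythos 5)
Your proposal is correct, and it follows the same overall strategy as the paper — pass to the circle bundle $\widehat{J_\C^p(X)} \to J_\C^p(X)$ obtained by restricting the Hopf fibration, identify $\widehat{J_\C^p(X)}$ with the iterated topological join $\widehat{X}^{*(p+1)}$, and use the high connectivity of iterated joins — but the final descent from the total space to the base is handled differently. The paper first disposes of $p=1,2$ by hand, then for $p\ge 2$ proves $J_\C^p(X)$ is simply connected and runs the homological Serre spectral sequence of the $S^1$-bundle, computing $\HH_i(J_\C^p(X))$ explicitly in degrees $<p$; it merely remarks that the Gysin sequence would also work. You instead compare the two $S^1$-bundles $\widehat{J_\C^p(X)}\to J_\C^p(X)$ and $S^{2N+1}\to\PP^N_\C$ via the natural Gysin ladder and a short induction on $j$. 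This buys you two things: you never need to prove the base is simply connected (Gysin only needs orientability, which is automatic for a principal $S^1$-bundle, and indeed the Serre spectral sequence also has untwisted fiber coefficients here since $S^1$ is connected, so the paper's simple-connectivity step is overcautious); and you never need to compute $\HH_i(J_\C^p(X))$ explicitly, since the ladder directly yields that $\iota_*$ is an isomorphism. Your observation that the compactness hypothesis in Theorem~\ref{the:pjoinconnectivity} is dispensable is also well taken and in fact slightly repairs a gap in the paper's proof: the proposition allows $X$ to be an arbitrary semi-algebraic subset, so $\widetilde{X}$ need not be compact, yet the paper invokes Theorem~\ref{the:pjoinconnectivity} verbatim; your appeal to the reduced join formula over $\Q$ for spaces of CW homotopy type is the cleaner way to get the needed vanishing. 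The one point where the paper is more explicit and your write-up compresses is the identification $\widehat{J_\C(X,Y)}\cong\widehat X * \widehat Y$ as an $S^1$-equivariant homeomorphism (Lemma~\ref{lem:Hopf} in the paper), which you flag as needing care; that care is warranted but the argument you gesture at is the right one.
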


Before proving Proposition \ref{prop:equivalence}
we first fix some notation.
\begin{notation}[Hopf fibration]
For any $k \geq 0$, we will denote by 
$\pi:\C^{k+1}\setminus \{0\}\rightarrow \PP^k_\C$ the tautological line 
bundle over $\PP^k_\C$, and by 
\[
\tilde{\pi}: \Sphere^{2k+1}\rightarrow \PP^k_\C,
\]
the \bfdef{Hopf fibration}, 
namely the restriction of $\pi$ to the unit sphere in
$\C^{k+1}$ defined by the equation $|z_1|^2 + \cdots + |z_{k+1}|^2 = 1$.
Finally for any subset $S \subset \PP^k_\C$, we will denote by 
$\widetilde{S}$ the subset $\tilde{\pi}^{-1}(S) \subset \Sphere^{2k+1}$. 
Restricting the map $\tilde{\pi}$  to $\widetilde{S}$ 
we obtain the restriction of the Hopf fibration to the base $S$ 
i.e. we have the following commutative diagram.

\[
\xymatrix{
\widetilde{S}  \ar@{^{(}->}[r]^{\iota} \ar[d]^{\tilde{\pi}}& \Sphere^{2k+1} \ar[d]^{\tilde{\pi}} \\
S \ar@{^{(}->}[r]^{\iota} & \PP^{k}_\C 
}
\]
\end{notation}

We need the following lemma.
\begin{lemma}
\label{lem:Hopf}
Let $X \subset \PP^k_\C, Y \subset \PP^\ell_\C$ be  
semi-algebraic
subsets. Then 
$\widetilde{J_\C (X,Y)} \subset \Sphere^{2(k+\ell) + 3}$ is homeomorphic to 
the  (topological) join  $ \widetilde{X} * \widetilde{Y}$.
\end{lemma}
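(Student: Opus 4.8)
The plan is to work entirely with the affine cones and Hopf fibrations, exhibiting an explicit homeomorphism $\widetilde{X}*\widetilde{Y} \xrightarrow{\ \cong\ } \widetilde{J_\C(X,Y)}$ that is equivariant for the diagonal circle action, and then verify that it descends to the Hopf quotients correctly. Recall the coordinate definition (Definition \ref{def:complexjoin}): with $(Z_0:\cdots:Z_{k+\ell+1})$ homogeneous coordinates on $\PP^{k+\ell+1}_\C$, the join $J_\C(X,Y)$ is defined by $\phi(Z_0,\ldots,Z_k)\wedge\psi(Z_{k+1},\ldots,Z_{k+\ell+1})$. So a point of $\widetilde{J_\C(X,Y)}\subset\Sphere^{2(k+\ell)+3}$ is a vector $(z,w)\in\C^{k+1}\times\C^{\ell+1}$ with $|z|^2+|w|^2=1$ such that $z\in C(X)$ (i.e. $z=0$ or $\pi(z)\in X$) and $w\in C(Y)$ (i.e. $w=0$ or $\pi(w)\in Y$). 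Meanwhile a point of $\widetilde X*\widetilde Y$ is an equivalence class $[u,v,t_0,t_1]$ with $u\in\widetilde X\subset\Sphere^{2k+1}$, $v\in\widetilde Y\subset\Sphere^{2\ell+1}$, $(t_0,t_1)\in\Delta^1$, subject to the collapsing relations at $t_0=1$ and $t_1=1$.

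First I would write down the candidate map $h:\widetilde X*\widetilde Y\to\widetilde{J_\C(X,Y)}$ by $h([u,v,t_0,t_1]) = (\sqrt{t_0}\,u,\ \sqrt{t_1}\,v)$. This lands in the unit sphere since $|\sqrt{t_0}u|^2+|\sqrt{t_1}v|^2 = t_0+t_1=1$, and since $u\in\widetilde X$ forces $\pi(\sqrt{t_0}u)=\pi(u)\in X$ whenever $t_0\neq 0$ (and $\sqrt{t_0}u=0$ when $t_0=0$), and symmetrically for $v$, the image lies in $\widetilde{J_\C(X,Y)}$. I would check well-definedness against the simplicial collapsing: when $t_0=1$ we get $(u,0)$ independent of $v$, matching the relation $[u,v,1,0]\sim[u,v',1,0]$; when $t_1=1$ we get $(0,v)$ independent of $u$; so $h$ is well-defined on the quotient. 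For the inverse, given $(z,w)\in\widetilde{J_\C(X,Y)}$ set $t_0=|z|^2$, $t_1=|w|^2$ (so $(t_0,t_1)\in\Delta^1$), and $u=z/|z|\in\widetilde X$, $v=w/|w|\in\widetilde Y$ when the respective norms are nonzero; when $z=0$ the class $[u,v,0,1]$ is independent of $u$ so any choice of $u\in\widetilde X$ works (here we use $X\neq\emptyset$ only if we want a literal section, but actually $\widetilde X\neq\emptyset$ is not needed because $z=0$ and $w=0$ cannot happen simultaneously on the unit sphere — wait, we do need $\widetilde X$ nonempty to pick $u$; this is exactly why the statement should be read with the same nonemptiness conventions as Definition \ref{def:complexjoin_geometric}, and indeed the coordinate join handles empty factors by the degenerate cases, where the claimed homeomorphism is immediate). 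Then I would confirm $h$ and this inverse are mutually inverse, and that both are continuous: $h$ is manifestly continuous on $\widetilde X\times\widetilde Y\times\Delta^1$ hence on the quotient, and the inverse is continuous because $z\mapsto z/|z|$ is continuous away from the relevant vanishing locus and the potential discontinuities are precisely collapsed in the join.

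The main obstacle I expect is the bookkeeping at the degenerate strata — i.e. making the inverse map genuinely continuous and well-defined across the loci $z=0$ and $w=0$ in $\widetilde{J_\C(X,Y)}$, and matching these cleanly with the collapsed faces $t_0=0$, $t_1=0$ of the join. The cleanest way around this is to observe that both spaces carry a natural action of the circle $\Sphere^1$ (acting diagonally by scalar multiplication on $(z,w)$, and by $u\mapsto\lambda u$, $v\mapsto\lambda v$ — note the \emph{same} $\lambda$ — on $\widetilde X*\widetilde Y$, since the join is formed inside a single sphere), that $h$ is equivariant, and that the Hopf projections are exactly the quotients by these actions; but for the lemma as stated we only need the homeomorphism, so I would present the explicit $h$ and its inverse and dispatch continuity by the standard argument that a continuous bijection from the quotient of a compact space to a Hausdorff space is a homeomorphism — here $\widetilde X*\widetilde Y$ is compact (as $X,Y$ are semi-algebraic, $\widetilde X,\widetilde Y$ are compact, and $\Delta^1$ is compact) and $\Sphere^{2(k+\ell)+3}$ is Hausdorff, so it suffices to check $h$ is a continuous bijection, which reduces the whole proof to the elementary verifications above.
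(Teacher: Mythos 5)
Your approach is genuinely different from the paper's. The paper argues line-by-line: it observes that the Hopf preimage of each projective line $L$ joining $\x\in X$ to $\y\in Y$ is a $\Sphere^3$, which is the topological join of the two circle fibers $\tilde\pi^{-1}(\x)$ and $\tilde\pi^{-1}(\y)$, and then asserts that these fiberwise identifications assemble into the global homeomorphism. You instead write down a single explicit global map $h([u,v,t_0,t_1])=(\sqrt{t_0}\,u,\sqrt{t_1}\,v)$. This is the standard spherical realization of the join, and the verifications you outline (landing in the unit sphere, landing in $\widetilde{J_\C(X,Y)}$ via the $z=0$ convention, compatibility with the simplicial collapsing, bijectivity) are all correct. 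The explicit formula is arguably cleaner than the paper's fiberwise gluing, and the $\Sphere^1$-equivariance remark is a nice way to tie it to the Hopf quotient.

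There is, however, a concrete gap in your closing step. You invoke the ``continuous bijection from a compact space to a Hausdorff space is a homeomorphism'' argument and justify compactness of $\widetilde X*\widetilde Y$ by claiming that $\widetilde X,\widetilde Y$ are compact ``as $X,Y$ are semi-algebraic.'' This is not true: a semi-algebraic (or constructible) subset of $\PP^k_\C$ need not be closed, and $\widetilde X=\tilde\pi^{-1}(X)\subset\Sphere^{2k+1}$ is compact precisely when $X$ is closed. For example $X=\PP^1_\C\setminus\{\mathrm{pt}\}$ is constructible but $\widetilde X$ is an open, non-compact subset of $\Sphere^3$. Since the lemma is invoked in Proposition~\ref{prop:equivalence} for an arbitrary non-empty semi-algebraic $X$, you cannot silently restrict to the closed case. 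To repair the proof you would need either to add a closedness hypothesis and track that this suffices downstream, or to prove continuity of $h^{-1}$ directly without the compactness shortcut --- for instance by arguing that $(u,v,t_0,t_1)\mapsto(\sqrt{t_0}u,\sqrt{t_1}v)$ on $\widetilde X\times\widetilde Y\times\Delta^1$ is a quotient map onto its image, which takes some care at the collapsed faces $t_0=0$ and $t_1=0$ when $\widetilde X$ or $\widetilde Y$ fails to be compact. (A secondary, minor point: your parenthetical discussion of the empty case is somewhat muddled; the cleanest thing is to note, as the paper's own proof implicitly does by choosing $\x\in X$ and $\y\in Y$, that the statement should be read with both $X$ and $Y$ non-empty, the degenerate cases being handled separately by Remark~\ref{rem:empty}.)
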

\begin{proof}
Consider $\x \in X$ and $\y \in Y$ and the projective line 
$L \subset J_\C(X,Y)$
joining $\x$ and $\y$. It is easy to see that the 
preimage $\tilde{L} = \tilde{\pi}^{-1}(L) \cong \Sphere^{3}$ is a 
topological join of
$\tilde{\pi}^{-1}(\x)$ and $\tilde{\pi}^{-1}(\y)$ 
(each homeomorphic to $\Sphere^1$). Now
since $\tilde{X}$ (resp. $\tilde{Y}$) is fibered by the various 
$\tilde{\pi}^{-1}(\x)$ 
(resp. $\tilde{\pi}^{-1}(\y)$), it follows that $\widetilde{J_\C (X,Y)}$ is homeomorphic
to $ \widetilde{X} * \widetilde{Y}$.
\end{proof}

\begin{proof}[Proof of Proposition \ref{prop:equivalence}]
We first treat the cases $p=1,2$. 
\begin{enumerate}
\item[$p=1$:] It is an easy exercise to show that the join,
$J_\C^1(X) = J_\C(X,X)$ is non-empty and connected whenever $X$ is
non-empty.  This proves the proposition in this case.
\item[$p=2$:] It is easy to see that 
$J_\C^2(X) = J_\C(J^1_\C(X),X)$ is non-empty and connected, whenever $X$ 
is non-empty. It is only a slightly more difficult exercise to prove that
$\HH_1(J_\C^2(X))$ (and in fact, $\HH_1(J_\C^p(X))$ for all $p > 1$)
vanishes. This follows 
from the statement that $\HH_1(J_\C(Y,Z)) = 0$ whenever $Y$ is connected,
since $J_\C^p(X) = J_\C(J_\C^{p-1}(X),X)$ and we have that 
$J_\C^{p-1}(X)$ is connected for $p > 1$.  
Proving that $\HH_1(J_\C(Y,Z)) = 0$ whenever $Y$ is connected,
after an application of Mayer-Vietoris exact sequence,
reduces to proving that $\HH_1(J_\C(Y,Z)) = 0$ whenever both
$Y$ and $Z$ are connected. This can be checked by a direct calculation
using the fact that the topological join $Y * Z$ is simply connected
whenever $Y,Z$ are both connected. Note that, this also proves that
$J^p_\C(X)$ is simply connected for all $p > 1$.
\end{enumerate}

Now let $p \geq  2$.
It follows from repeated applications of Lemma  \ref{lem:Hopf} that
$\widetilde{J_\C^p(X)}$ is homeomorphic to 
$$\displaylines{
\underbrace{ \widetilde{X} * \cdots * \widetilde{X}}_{(p+1) \mbox{ times}}.
}
$$

We also have the commutative square
\[
\xymatrix{
\widetilde{J_\C^p (X)}  \ar@{^{(}->}[r]^-i \ar[d]^{\tilde{\pi}}& \Sphere^{2(p+1)(k+1)-1} \ar[d]^{\tilde{\pi}} \\
J_\C^p (X) \ar@{^{(}->}[r]^-i & \PP^{(p+1)(k+1)-1}_\C 
}
\]
and a corresponding square 
\[
\xymatrix{
\HH_*(\widetilde{J_\C^p (X)})  \ar[r]^-{\iota_*} \ar[d]^{\tilde{\pi}_*}& 
\HH_*(\Sphere^{2(p+1)(k+1)-1}) \ar[d]^{\tilde{\pi}_*} \\
\HH_*(J_\C^p (X)) \ar[r]^-{\iota_*} & \HH_*(\PP^{(p+1)(k+1)-1}_\C) 
}
\]
of induced homomorphisms in the homology groups.

It follows from Theorem \ref{the:pjoinconnectivity} that 
if $X \neq \emptyset$, then
$$
\displaylines{
\HH_0(\widetilde{J_\C^p(X)})  \cong \Q,\cr
\HH_i(\widetilde{J_\C^p(X)}) \cong 0, \;\;
0 < i < p. 
}
$$

Since, for $p > 1$, $J_\C^p(X)$ is simply connected (see above) 
$\widetilde{J_\C^p(X)}$ is a simple $\Sphere^1$-bundle 
(i.e. a $\Sphere^1$-bundle with a simply connected base)
over $J_\C^p(X)$.

It now follows by a standard argument (which we expand below) 
involving the spectral sequence of the
bundle $\tilde{\pi}:\widetilde{J_\C^p (X)} \rightarrow J_\C^p(X)$,
that for $0 \leq i < p$, 

\begin{eqnarray}
\label{eqn:sigma}
\HH_{i}(J_\C^p(X)) &\cong & \Q, \mbox{ for } i \mbox{ even},\\ \nonumber
\HH_{i}(J_\C^p(X)) &\cong & 0 \mbox{ for } i \mbox{ odd}.\\ \nonumber
\end{eqnarray}

(The above claim also follows from the Gysin sequence of the
$\Sphere^1$-bundle $\tilde{\pi}:\widetilde{J_\C^p(X)} \rightarrow J_\C^p(X)$
but we give an independent proof below).

Consider the $E^2$-term of the (homological) 
spectral sequence of the bundle 
\[
\tilde{\pi}:\widetilde{J_\C^p(X)} \rightarrow J_\C^p(X).
\]
For $i,j \geq 0$, we have that
\[
E^2_{i,j} = \HH_i(J_\C^p(X)) \otimes \HH_j(\Sphere^1).
\]

From this we deduce that
\[
E^2_{i,0} = E^2_{i,1} = \HH_i(J_\C^p(X)).
\]

Also, from the fact that
\[\HH_0(\widetilde{J_\C^p(X)}) = \Q,
\]
we get  that 
\[
E^2_{0,0} = \Q,
\] 
and hence, 
\[
E^2_{0,1} = \Q
\] 
as well. 
Moreover, we have that 
\[
E^3_{i,j} = E^4_{i,j} = \cdots = E^\infty_{i,j}
\] 
for all $i \geq 0$ and $j = 0,1$.
Now from the fact that the spectral sequence $E^r$
converges to the homology of $\widetilde{J_\C^p(X)}$ we deduce
that
\begin{eqnarray*}
E^3_{i,j} &=& 0 \mbox{ for } 0 \leq i \leq p-1 \mbox{ and all } j,\\
E^3_{0,0} &=& \Q.
\end{eqnarray*}

This implies that the differential 
$$
d_2: E^2_{i,0} \rightarrow E^2_{i-2,1}
$$
is an isomorphism for $1 \leq i \leq p-1$. 
Together with the fact that
\[
E^2_{i,0} = E^2_{i,1} = \HH_i(J_\C^p(X)),
\] 
this immediately implies (\ref{eqn:sigma}).
The claim that $\iota_*$ is an isomorphism follows directly from the above.
The dual statement about $\iota^*$ follows immediately from the universal
coefficient theorem for cohomology (see e.g. \cite{Spanier}).
\end{proof}

In our application we will need the following 
(rather technical) generalization of Proposition \ref{prop:equivalence}.
Let $p, \alpha_0,\ldots, \alpha_\omega \geq 0$,
$N  = \prod_{0 \leq j \leq \omega} (\alpha_j + 1)$.
Let $I$ denote the set of tuples
$(i_0,\ldots,i_\omega)$ with $0 \leq i_j \leq \alpha_j, 
0 \leq j \leq \omega$,
and for each tuple $(i_0,\ldots,i_\omega) \in I$,
let $\pi_{(i_0,\ldots,i_\omega)}$ denote the projection
\[
\underset{(j_0,\ldots,j_\omega) \in I}{\bigtimes} \PP_\C^k  \longrightarrow \PP_\C^k
\]
defined by 
\[
(\x_{(j_0,\ldots,j_{\omega})})_{(j_0,\ldots,j_\omega) \in I} \mapsto
\x_{(i_0,\ldots,i_\omega)},
\]
and 
for any subset $X \subset \PP^k_\C$
we denote
\[
X^{(i_0,\ldots,i_\omega)} = \pi_{(i_0,\ldots,i_\omega)}^{-1}(X).
\]

\begin{proposition}
\label{prop:equivalence_general}
Let $X \subset \PP_\C^k$ be a semi-algebraic subset.
Also, let for each 
$i, 0 \leq i \leq \omega$, ${\Lambda}^i \in \{\bigcap,\bigcup\}$,  
and let
$
\displaystyle{
S \subset \underset{(j_0,\ldots,j_\omega) \in I}{\bigtimes} \PP_{\C}^k
}
$ 
denote the  semi-algebraic subset
$$
\displaylines{
S \defeq
\underset{0 \leq i_0 \leq \alpha_0}{{\Lambda}^0} \cdots 
\underset{0 \leq i_\omega \leq \alpha_\omega}{{\Lambda}^\omega}
(J^p_\C(X))^{(i_0,\ldots,i_\omega)},
}
$$
with 
\[
\iota: S \hookrightarrow \underset{(j_0,\ldots,j_\omega) \in I}{\bigtimes}
\PP^{(p+1)(k+1)-1}_\C
\]
denoting  the inclusion map. 
Then, the induced homomorphisms
$$
\displaylines{
\iota_{*}: \HH_j(S) \rightarrow \HH_j(\underset{(j_0,\ldots,j_\omega) \in I}{\bigtimes}
\PP^{(p+1)(k+1)-1}_\C) \cr
\iota^{*}: \HH^j(\underset{(j_0,\ldots,j_\omega) \in I}{\bigtimes}
\PP^{(p+1)(k+1)-1}_\C) \rightarrow \HH^j(S)
}
$$
are isomorphisms for $0 \leq j < p$. 
\end{proposition}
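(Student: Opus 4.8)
The plan is to bootstrap Proposition~\ref{prop:equivalence_general} from Proposition~\ref{prop:equivalence} by exploiting the same Hopf-fibration/join machinery, but now upstairs over a product of spheres rather than a single sphere. First I would pull everything back under the Hopf fibration componentwise. For each tuple $(i_0,\ldots,i_\omega) \in I$ we have a copy of the restricted Hopf fibration $\widetilde{(J^p_\C(X))^{(i_0,\ldots,i_\omega)}} \to (J^p_\C(X))^{(i_0,\ldots,i_\omega)}$, and taking the fiber product of all of these over the product of projective spaces gives a simple $(\Sphere^1)^N$-bundle $\tilde\pi : \widetilde{S} \to S$ (the base $S$ being simply connected for $p > 1$, which I would check exactly as in the proof of Proposition~\ref{prop:equivalence} using that joins and intersections/unions of connected sets are connected in the relevant range). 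The key point is that the total space $\widetilde{S}$ has a clean description: by Lemma~\ref{lem:Hopf} applied repeatedly, $\widetilde{(J^p_\C(X))^{(i_0,\ldots,i_\omega)}}$ is (up to homeomorphism) the fiberwise $(p+1)$-fold join of $\widetilde{X}$ over the corresponding coordinate projection of the sphere product, and the $\Lambda$-operations (intersections and unions) downstairs pull back to the same operations upstairs since $\tilde\pi$ commutes with them.

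Next I would establish the connectivity of $\widetilde{S}$ in the range $0 \leq j < p$. Since each $\widetilde{(J^p_\C(X))^{(i_0,\ldots,i_\omega)}}$ is $p$-connected (it is homeomorphic to an iterated join of $N$-indexed copies, which by Theorem~\ref{the:pjoinconnectivity} is $p$-equivalent to a point — here one uses that the ambient coordinates not involved in the tuple are free, so the preimage is a product of a $p$-connected join with a sphere product and hence still $p$-connected up to the appropriate shift, but actually the relevant space is just the join pulled back, which is genuinely $p$-connected), a Mayer--Vietoris / Mayer--Vietoris-spectral-sequence induction on the nested $\Lambda$-operations shows that any Boolean-type combination (finite intersections and unions) of $p$-connected subspaces sitting compatibly inside a product is itself $p$-connected in reduced homology. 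More precisely, for a union $A \cup B$ of two subspaces each $p$-connected with $p$-connected intersection, Mayer--Vietoris gives $\widetilde{\HH}_j(A\cup B) = 0$ for $j < p$; iterating over the $\alpha_i$ many terms at each of the $\omega+1$ levels, and checking that intersections of the pieces are again of the same form (so the induction hypothesis applies), yields $\widetilde{\HH}_j(\widetilde{S}) = 0$ for $0 \leq j < p$, i.e. $\widetilde{S}$ has the reduced homology of a point through dimension $p-1$.

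Then I would run the Leray--Serre spectral sequence argument of Proposition~\ref{prop:equivalence} in this multi-sphere setting. The $E^2$-page is $E^2_{i,j} = \HH_i(S) \otimes \HH_j((\Sphere^1)^N)$, and $\HH_*((\Sphere^1)^N)$ is an exterior algebra on $N$ degree-one generators. Because $\widetilde{S}$ is homologically a point through dimension $p-1$, the spectral sequence must ``kill'' all of $E^2_{i,j}$ for $0 < i + j \leq p-1$ (keeping only $E^\infty_{0,0} = \Q$), and this forces, via the transgression-type differentials $d_2,\ldots$, that $\HH_i(S)$ for $0 \leq i < p$ equals the homology of the product of projective spaces $\underset{(j_0,\ldots,j_\omega) \in I}{\bigtimes}\PP_\C^{(p+1)(k+1)-1}$ through that range — because that product space also supports a simple $(\Sphere^1)^N$-bundle (the product of Hopf fibrations over a product of spheres) whose total space, a product of odd spheres $\prod \Sphere^{2(p+1)(k+1)-1}$, is likewise $(p-1)$-connected, so the same spectral sequence comparison applies to it. The naturality of the spectral sequence with respect to the inclusion $\iota$ (covered by $\tilde\iota$ on total spaces, which is a homology isomorphism through dimension $p-1$ between the two $(p-1)$-connected total spaces) then forces $\iota_*$ to be an isomorphism on $\HH_j$ for $0 \leq j < p$; the cohomology statement follows from the universal coefficient theorem as before.

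The main obstacle I expect is the bookkeeping for the connectivity of $\widetilde{S}$ under the nested mix of intersections and unions: unlike the single-join case, one must verify that the class of subspaces obtained by partial $\Lambda$-operations is closed under the intersections that arise in each Mayer--Vietoris step (so the induction is well-founded), and one must track that the ``free'' ambient coordinates (those corresponding to tuples not appearing in a given $\pi_{(i_0,\ldots,i_\omega)}$) do not disrupt the connectivity count — this is a matter of noting $X^{(i_0,\ldots,i_\omega)}$ is a product of $J^p_\C(X)$ with a product of full projective spaces, and full projective spaces contribute only even-degree homology starting in degree $2$, which is harmless in the range $j < p$ after tensoring. Once that is pinned down, the spectral sequence comparison is essentially identical to the one already carried out in the proof of Proposition~\ref{prop:equivalence}.
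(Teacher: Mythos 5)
Your argument is sound in outline but takes a genuinely different, and more elaborate, route than the paper's proof. The paper works entirely downstairs on $S$: it first observes that when all the $\Lambda^i$ are intersections, $S$ is literally the product $\bigtimes_{I} J^p_\C(X)$, so the result follows from Proposition~\ref{prop:equivalence} applied factor-by-factor together with the K\"unneth formula (here one uses that a sum $\sum j_i = j < p$ forces every $j_i < p$); the union case is then handled by comparing the Mayer--Vietoris double complex for the cover of $S$ with that for the trivial cover of the ambient product, noting the multi-fold intersections of the pieces are again coordinate products and so fall under the first case; the general mix of $\Lambda$'s is dispatched by induction on $\omega$. In other words, Proposition~\ref{prop:equivalence} is used as a black box and the rest is purely K\"unneth plus Mayer--Vietoris on the base. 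You instead re-enter the Hopf/spectral-sequence machinery: lift $S$ to $\widetilde{S}$ under the product Hopf fibration, argue that $\widetilde{S}$ has the reduced homology of a point through degree $p-1$ (via essentially the same K\"unneth/Mayer--Vietoris bookkeeping, but applied upstairs to $\widetilde{J^p_\C(X)}$ and the odd spheres), and then descend through the Leray--Serre spectral sequence of the $(\Sphere^1)^N$-bundle. This does work, but it re-derives a large chunk of Proposition~\ref{prop:equivalence} --- which already packages up exactly the single-$\Sphere^1$ version of this descent --- so you are doing more than necessary.

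Two points where your write-up is more fragile than it lets on. First, your final descent step is not ``naturality forces $\iota_*$ to be an isomorphism''; with the fiber $(\Sphere^1)^N$ the $E^2$-page has $N+1$ nonzero rows and several nontrivial differentials, so what you actually need is a Zeeman-type comparison theorem (fiber map an iso, abutment map an iso through degree $p-1$ and epi at $p$, hence base map an iso through degree $p-1$), not the elementary two-row $d_2$ argument of Proposition~\ref{prop:equivalence}. Second, for the bundle to be simple you need $S$ itself to be simply connected for $p>1$, which requires checking that simple-connectedness survives the nested unions (van Kampen over the same combinatorial structure); you mention this in passing but it is the same Boolean-combination bookkeeping you flag as the main obstacle and should be handled together with it. Neither of these is a fatal gap, but both are avoided entirely by the paper's direct K\"unneth/Mayer--Vietoris argument on $S$.
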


\begin{proof}
Notice that, if $\omega=0$ and 
$\displaystyle{{\Lambda}^0 = \bigcap}$, 
then 
$$
\displaylines{
\bigcap_{0 \leq i_0 \leq \alpha_0} J^p_\C(X)^{(i_0)} = \underset{(j_0,\ldots,j_\omega) \in I}{\bigtimes}
J^p_\C(X), 
}
$$
and the claim follows in this case from 
Proposition \ref{prop:equivalence} and the
Kunneth formula.

If $\omega=0$ and $\displaystyle{{\Lambda}^0 = \bigcup}$, 
the claim follows from the 
previous case and a standard argument using the Mayer-Vietoris double
complex.

The general case is easily proved using induction on $\omega$. 
\end{proof}

\subsection{Complex join fibered over a projection and its properties}
In our application we need the complex join fibered over 
certain  projections.
We first give a geometric definition followed by one involving co-ordinates.

Let $V,W$ be finite dimensional $\C$-vector spaces and 
$A \subset \PP(V) \times \PP(W)$ a subset.
Let $\mathrm{pr}_1: \PP(V) \times \PP(W) \rightarrow \PP(V)$ 
denote the projection on the first component. Then, for $p \geq 0$,
the \bfdef{$p$-fold complex join of $A$ fibered over the 
projection $\mathrm{pr}_1$} is defined by

\begin{definition}[Geometric definition of complex join fibered over
a projection]
\label{def:joinoveramap1_geometric}
\begin{eqnarray*}
\label{eqn:definitionofjoin1_geometric}
J^p_{\C,\mathrm{pr}_1}(A) &=& \{(\x,\y) \;\mid \; 
\x \in \PP^k_\C, \y \in J^p_\C(A_\x)\},
\end{eqnarray*}
were $A_\x = \mathrm{pr}_1^{-1}(\x) \cap A$.
\end{definition}

We now give a definition in terms of co-ordinates.
\begin{definition}[Complex join fibered over a projection in terms of 
co-ordinates]
\label{def:joinoveramap1}
Let $A \subset \PP^k_\C \times \PP^{\ell}_\C$ be a constructible  set
defined by a first-order multi-homogeneous formula,
\[
\Phi(X_0,\ldots,X_k;Y_0,\ldots,Y_\ell)
\] 
and let
$\mathrm{pr}_1: 
\PP^k_\C \times \PP^{\ell}_\C \rightarrow \PP^k_\C$ be the projection
map to the first component.
For $p \geq 0$, the $p$-fold complex join of $A$ fibered over the 
map $\mathrm{pr}_1$, 
$J^p_{\C,\mathrm{pr}_1}(A) \subset \PP^{k}_\C \times \PP^{(\ell+1)(p+1)-1}_\C $,  
is defined by  the formula
\bfdef{
\begin{equation}
\label{eqn:definitionofjoin1}
J^p_{\C,\mathrm{pr}_1}(\Phi)(X_0,\ldots,X_k;Y^0_0,\ldots,Y^0_\ell,\ldots,Y^p_0,
\ldots,Y^p_{\ell}) 
\defeq
\bigwedge_{i=0}^p \phi(X_0,\ldots,X_k;Y^i_0,\ldots,Y^i_\ell). 
\end{equation}
}
\end{definition}

\begin{remark}
\label{rem:induced}
The projection map 
\[
\mathrm{pr}_1: \PP^{k}_\C \times \PP^{(\ell+1)(p+1)-1}_\C \rightarrow
\PP^{k}_\C
\]
clearly restricts to a surjection 
\[
\mathrm{pr}_1^{(p)}: J^p_{\C,\mathrm{pr}_1}(A) \rightarrow \mathrm{pr}_1(A)
\] 
sending 
$(x_0:\cdots:x_k;y^0_0:\cdots:y^p_\ell) \in J^p_{\C,\mathrm{pr}_1}(A)$ to
$(x_0:\cdots:x_k) \in \mathrm{pr}_1(A)$. 
\end{remark}

Now, let $A \subset \PP^k_\C \times \PP^\ell_\C$ be a 
semi-algebraic
subset 
$\mathrm{pr}_1:  \PP^k_\C \times \PP^\ell_\C \rightarrow \PP^k_\C$ be the
projection on the first component.

Suppose that $\mathrm{pr}_1$ restricted to $A$ is a 
compact covering.
The following theorem relates the Poincar\'e polynomial of 
$J^p_{\C,\mathrm{pr}_1}(A)$ to that of the image  $\mathrm{pr}_1(A)$.

\begin{theorem}
\label{the:compactcovering}
For every $p \geq 0$, we have that
\begin{eqnarray}
\label{eqn:complexjoin}
P_{\mathrm{pr}_1(A)}  &=&  (1 - T^2) P_{J^p_{\C,\mathrm{pr}_1}(A)}  \mod T^{p}.
\end{eqnarray}
\end{theorem}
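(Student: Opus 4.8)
The plan is to reduce Theorem~\ref{the:compactcovering} to a statement about the fibers of the induced projection $\mathrm{pr}_1^{(p)}\colon J^p_{\C,\mathrm{pr}_1}(A)\to\mathrm{pr}_1(A)$ and then invoke a Leray/Vietoris--Begle type argument, exactly as in the real case of \cite{BZ09} but with the role of ``$p$-connected fiber'' replaced by ``fiber homologically a $p$-dimensional projective space'' as furnished by Proposition~\ref{prop:equivalence}. First I would observe that for a point $\x\in\mathrm{pr}_1(A)$, the fiber $(\mathrm{pr}_1^{(p)})^{-1}(\x)$ is, by Definition~\ref{def:joinoveramap1_geometric}, exactly $J^p_\C(A_\x)$ where $A_\x=\mathrm{pr}_1^{-1}(\x)\cap A$ is a non-empty (here compact covering is used to guarantee non-emptiness of the relevant compact fibers) semi-algebraic subset of $\PP^\ell_\C$. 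Proposition~\ref{prop:equivalence} then tells us that the inclusion $J^p_\C(A_\x)\hookrightarrow \PP^{(\ell+1)(p+1)-1}_\C$ induces homology isomorphisms in degrees $<p$; in particular $\HH_i(J^p_\C(A_\x);\Q)\cong\HH_i(\PP^{(\ell+1)(p+1)-1}_\C;\Q)$, which is $\Q$ for $i$ even and $0$ for $i$ odd, in the range $0\le i<p$, and this is \emph{independent of $\x$}.

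Next I would set up the Leray spectral sequence (or, since we only work rationally and in a truncated range, a more hands-on Mayer--Vietoris / \v{C}ech-to-derived-functor argument) for the map $\mathrm{pr}_1^{(p)}$. The compact covering hypothesis is what makes the higher direct image sheaves behave well and lets one pass between the semi-algebraic (constructible) category and honest compact spaces where the topology is controlled; this is the analogue of the use of compactness in \cite{BZ09}. Because the stalk cohomology of the fibers is constant ($\Q$ in even degrees $0,2,4,\dots$ and $0$ in odd degrees, for total degree $<p$), the relevant higher direct image sheaves $R^q(\mathrm{pr}_1^{(p)})_*\Q$ are, in the range $q<p$, either the constant sheaf $\Q$ on $\mathrm{pr}_1(A)$ (for $q$ even) or $0$ (for $q$ odd). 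The spectral sequence $E_2^{s,t}=\HH^s(\mathrm{pr}_1(A);R^t(\mathrm{pr}_1^{(p)})_*\Q)\Rightarrow \HH^{s+t}(J^p_{\C,\mathrm{pr}_1}(A);\Q)$ therefore has, in the truncated region $s+t<p$, only the rows $t=0,2,4,\dots$, each isomorphic to $\HH^s(\mathrm{pr}_1(A);\Q)$. Assembling the contributions (and keeping careful track that we can only conclude things modulo $T^p$ because Proposition~\ref{prop:equivalence} only controls homology below degree $p$, and also the spectral sequence differentials out of the truncated range are invisible there), one gets that the Poincar\'e polynomial of $J^p_{\C,\mathrm{pr}_1}(A)$ equals $P_{\mathrm{pr}_1(A)}\cdot(1+T^2+T^4+\cdots)$ modulo $T^p$; since $(1+T^2+T^4+\cdots)=1/(1-T^2)$ as a formal power series, multiplying through by $(1-T^2)$ yields exactly $P_{\mathrm{pr}_1(A)}=(1-T^2)\,P_{J^p_{\C,\mathrm{pr}_1}(A)}\bmod T^p$. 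I would phrase the argument in homology rather than cohomology throughout if the paper's conventions prefer it, using Remark~\ref{rem:coefficients} to move freely between the two over $\Q$.

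The step I expect to be the main obstacle is the careful bookkeeping that turns Proposition~\ref{prop:equivalence} (an isomorphism statement about absolute homology of a single fiber below degree $p$) into a statement about the local system / higher direct images being literally constant in the relevant truncated range, so that the spectral sequence collapses in the way claimed. In particular one must check (i) that the fibers are non-empty and that the family is ``nice enough'' over $\mathrm{pr}_1(A)$ for the Leray spectral sequence with the compact-covering hypothesis to apply — this is where Definition~\ref{def:compactcovering} and the remark that $\mathrm{pr}_1(A)$ is itself constructible are invoked — and (ii) that there is no monodromy issue: the identification $\HH_i(J^p_\C(A_\x);\Q)\cong\HH_i(\PP^N_\C;\Q)$ is induced by the inclusion into the \emph{fixed} ambient projective space, hence is canonical and $\mathrm{pr}_1(A)$-locally-constant, so the sheaves $R^t(\mathrm{pr}_1^{(p)})_*\Q$ really are constant and not merely locally constant. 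Once these points are nailed down, the rest is the formal power series manipulation $(1-T^2)^{-1}=1+T^2+T^4+\cdots$ together with the truncation $\bmod\ T^p$, which is routine. An alternative, perhaps cleaner, route avoiding sheaf-theoretic machinery would be to prove the identity first when $\mathrm{pr}_1$ is a trivial bundle (directly from the Kunneth formula and Proposition~\ref{prop:equivalence}) and then to bootstrap to the general compact-covering case by a Mayer--Vietoris induction on a finite good cover of $\mathrm{pr}_1(A)$, exactly mirroring how Proposition~\ref{prop:equivalence_general} was deduced from Proposition~\ref{prop:equivalence}; I would present whichever of these the surrounding text makes most natural.
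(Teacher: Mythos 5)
Your outline has the right geometric picture (fibers of $\mathrm{pr}_1^{(p)}$ are iterated complex joins, homologically a projective space below degree $p$ by Proposition~\ref{prop:equivalence}, hence a Kunneth-type identity truncated mod $T^p$) and the final power-series manipulation is correct. But the central spectral sequence step, as you have written it, does not actually close. You compute that in the region $s+t<p$ the $E_2$ page of the Leray spectral sequence for $\mathrm{pr}_1^{(p)}$ agrees with that of the trivial bundle $\mathrm{pr}_1(A)\times\PP_\C^{(p+1)(\ell+1)-1}\to\mathrm{pr}_1(A)$, and then ``assemble the contributions'' as if $E_\infty=E_2$ there. That last inference is a genuine gap: identical $E_2$ pages in a range do not by themselves force vanishing differentials within that range (e.g.\ $d_3\colon E_3^{0,2}\to E_3^{3,0}$ lives entirely in total degree $<p$ once $p>3$ and could a priori be nonzero). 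What the paper actually does is set up a \emph{morphism of spectral sequences} induced by the inclusion $J^p_{\C,\mathrm{pr}_1}(A)\hookrightarrow \mathrm{pr}_1(A)\times\PP_\C^{(p+1)(\ell+1)-1}$, shows (via proper base change and Proposition~\ref{prop:equivalence}) that this morphism is an isomorphism on $E_2$ in total degree $<p$, and then invokes a standard comparison theorem (McCleary) to conclude the same at $E_\infty$. The trivial bundle's spectral sequence is the one that degenerates by Kunneth; the comparison theorem is what transports that degeneration to the nontrivial side, and this is the step your write-up is missing. You gesture at the inclusion being ``into a fixed ambient projective space'' to dispose of monodromy, which is on the right track, but the monodromy point is subordinate to the comparison argument, not a substitute for it.

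A second, more minor, misidentification: you invoke the compact covering hypothesis to ``guarantee non-emptiness of the relevant compact fibers'' and to make higher direct images behave well. Neither is what it is for. Fibers over $\mathrm{pr}_1(A)$ are non-empty by definition of the image, and the higher direct image/proper base change machinery needs properness, which the paper gets by first assuming $A$ compact. The compact covering hypothesis enters only afterward, in the passage from compact $A$ to general $A$: one writes everything as a direct limit over $K\in\K(A)$, uses that singular homology commutes with direct limits, and then needs to know that $\{\mathrm{pr}_1(K)\colon K\in\K(A)\}$ is cofinal in $\K(\mathrm{pr}_1(A))$ -- that is exactly Definition~\ref{def:compactcovering}. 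Your proposal collapses these two stages into one Leray argument, which obscures where properness (compact case) versus compact covering (limit argument) is really used. Your suggested alternative via Mayer--Vietoris over a good cover would face the same issue as the spectral sequence route: Mayer--Vietoris yields a long exact sequence, not a direct sum, so again some comparison-with-the-trivial-case device would be needed to upgrade to equality of Betti numbers rather than inequalities.
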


\begin{remark}
\label{rem:compact_covering}
Note that the compact covering property is crucial for Theorem
\ref{the:compactcovering}.
to hold. In our applications,  $\mathrm{pr}_1$
is going to be either an open or a closed map, and will thus automatically
have the compact covering property.
\end{remark}

\begin{proof}
We first assume that $A$ is semi-algebraic and compact, and let $B$ denote 
$\mathrm{pr}_1(A) \times \PP_\C^{(p+1)(\ell+1)-1}$.
We have the following commutative square.

\[
\xymatrix{
J^p_{\C,\mathrm{pr}_1}(A)  \ar@{^{(}->}[r]^-i \ar[d]^{\mathrm{pr}_1^{(p)}}& 
B \ar[d]^{\mathrm{pr}_1} \\
\mathrm{pr}_1(A) \ar[r]^{{\mathrm{Id}}} & \mathrm{pr}_1(A) 
}
\]

The diagram above induces a morphism, 
$\phi_r^{i,j}: E_r^{i,j} \rightarrow {'E}_r^{i,j}$
between the Leray-Serre spectral sequences
of the two vertical maps in the above diagram. Here, $E_r$ (resp. ${'E}_r$)
denotes the Leray-Serre spectral sequence of the map 
$\mathrm{pr}_1: B \rightarrow 
\mathrm{pr}_1(A)$
(resp. 
$\mathrm{pr}_1^{(p)}: J^p_{\C,\mathrm{pr}_1}(A) \rightarrow \mathrm{pr}_1(A)$ 
).
The spectral sequence, ${E}_r$, 
degenerates at the ${E}_2$-term
where 
\[
{E}_2^{i,j} = \HH^i(\mathrm{pr}_1(A), R^j\mathrm{pr}_{1*}\Q_B),
\]
and $\Q_B$ 
denotes the constant sheaf with stalk $\Q$ on 
$B$,  and
$R^*\mathrm{pr}_{1*}$ denotes  the higher direct image functor.
(The above formulation of Leray-Serre spectral sequence of a map is 
standard; we refer the reader to \cite[Th\'eor\`eme  4.17.1]{Godement} for a 
purely sheaf theoretic statement without reference to higher derived
images.)

Similarly we have
\[
{'E}_2^{i,j} = \HH^i(\mathrm{pr}_1(A), R^j\mathrm{pr}_{1*}^{(p)} \Q_{J^p_{\C,\mathrm{pr}_1}(A)}).
\]

We also have that for each $\x \in \mathrm{pr}_1(A)$
$$\displaylines{
(R^j\mathrm{pr}_{1*}\Q_B)_\x\cong 
\HH^j(\PP^{(p+1)(\ell+1)-1}_\C)
\cong
\HH^j((\mathrm{pr}_1^{(p)})^{-1}(\x))
\cong
(R^j\mathrm{pr}_{1*}^{(p)} \Q_{J^p_{\C,\mathrm{pr}_1}(A)})_\x,
}
$$
where the first and the last isomorphisms are consequences of the
proper base change theorem 
(see \cite[Remarque 4.17.1]{Godement}) 
noting that $\mathrm{pr}_1, \mathrm{pr}_1^{(p)}$ are both
proper maps,
and the middle one is  a consequence
of Proposition \ref{prop:equivalence}.

It follows that the sheaves 
$R^j\mathrm{pr}_1\Q_B$ and
$R^j\mathrm{pr}_1^{(p)} \Q_{J^p_{\C,\mathrm{pr}_1}(A)}$
are isomorphic by the sheaf map induced by the inclusions
\[
(\mathrm{pr}_1^{(p)})^{-1}(\x)
\hookrightarrow
\{\x\} \times \PP^{(p+1)(\ell+1)-1}_\C, \x \in \mathrm{pr}_1(A)
\]
and hence,
\[
\phi_2^{i,j}:E_2^{i,j} \rightarrow {'E}_2^{i,j}
\] 
are isomorphisms for $i+j < p$. 

It now follows from a general result about spectral sequences 
(see \cite[page. 66]{Mcleary}) that
$E_\infty^{i,j} \cong  {'E}_\infty^{i,j}$ for $0 \leq i+j < p$.
This implies that
$\HH^q(J^p_{\C,\mathrm{pr}_1}(A)) \cong \HH^q(\mathrm{pr}_1(A) \times \PP_\C^{(p+1)(\ell+1)-1})$
for $0 \leq q < p$, and thus
\begin{equation}
\label{eqn:1}
P_{J^p_{\C,\mathrm{pr}_1}(A)} = P_{\mathrm{pr}_1(A) \times \PP_\C^{(p+1)(\ell+1)-1}} \mod T^{p}.
\end{equation}

We also have that
\begin{eqnarray}
\label{eqn:2}
P_{\mathrm{pr}_1(A) \times \PP_\C^{(p+1)(\ell+1)-1}} &=& P_{\mathrm{pr}_1(A)} \times P_{\PP_\C^{(p+1)(\ell+1)-1}} \\ \nonumber
                               &=& P_{\mathrm{pr}_1(A)} \times (1 + T^2 +\cdots+ T^{2((p+1)(\ell+1)-1)}) \\ \nonumber
                               &=& P_{\mathrm{pr}_1(A)} \times (1 - T^2)^{-1} \mod T^{p}.
\end{eqnarray}

Equation 
(\ref{eqn:complexjoin})
now follows from Equations (\ref{eqn:1}) and (\ref{eqn:2}).

The general case follows by taking direct limit 
over all compact 
subsets of $A$. More precisely, for $K_1 \subset K_2$
compact subsets of $A$, we have for
$0 \leq q < p$ the following commutative square after switching
to homology (cf. Remark \ref{rem:coefficients}).
(Note that following Definition 
\ref{def:joinoveramap1_geometric}
the complex join fibered over a projection is defined for
arbitrary not necessarily constructible subsets of $A$.)

\[
\xymatrix{
\HH_q(J_{\C,\mathrm{pr}_1}^p(K_1)) \ar[r]^{\iota_*} \ar[d]^{\cong}& 
\HH_q(J_{\C,\mathrm{pr}_1}^p(K_2))\ar[d]^{\cong} \\
\HH_q(\mathrm{pr}_1(K_1)\times \PP_\C^{(p+1)(\ell+1)-1})\ar[r]^{\iota_*} & 
\HH_q(\mathrm{pr}_1(K_2)\times \PP_\C^{(p+1)(\ell+1)-1}) \\
}
\]
where the vertical maps are isomorphisms  by the previous case. If  we take the
direct limit as $K$ ranges in $\K(A)$, we obtain the following:
\[
\xymatrix{
\dlim \HH_q(J_{\C,\mathrm{pr}_1}^p(K)) \ar[r]^{\cong} \ar[d]^{\cong}& 
\HH_q(J_{\C,\mathrm{pr}_1}^p(A))\ar[d] \\
\dlim \HH_q(\mathrm{pr}_1(K)\times \PP_\C^{(p+1)(\ell+1)-1})\ar[r]^{\cong} & 
\HH_q(\mathrm{pr}_1(A)\times \PP_\C^{(p+1)(\ell+1)-1}) \\
}
\]
The isomorphism on the top level comes from the fact that homology and
direct limit commute~\cite{Spanier}.
For the bottom isomorphism, we need
the additional fact that since we assume that $\mathrm{pr}_1$ 
is 
a compact covering
we have 
$$ \dlim \{\HH_q(\mathrm{pr}_1(K)\times \PP_\C^{(p+1)(\ell+1)-1}) 
\mid K\in\K(A)\}
=\dlim \{\HH_q(L\times \PP_\C^{(p+1)(\ell+1)-1} ) \mid L\in \K(\mathrm{pr}_1(A))\}. 
$$ 
This proves that the right vertical arrow is also an isomorphism.
\end{proof}

Using the same notation as in Theorem \ref{the:compactcovering} 
and Eqn. \eqref{def:pseudo-Poincarepolynomial}
we have 
the following easy corollary of Theorem \ref{the:compactcovering}.
\begin{corollary}
\label{cor:compactcovering}
Let $p = 2m +1$ with $m \geq 0$. Then
\begin{eqnarray}
Q_{\mathrm{pr}_1(A)}  &=&  (1 - T)\;Q_{J^p_{\C,\mathrm{pr}_1}(A)}  \mod T^{m+1}.
\end{eqnarray}
\end{corollary}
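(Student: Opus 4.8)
The plan is to obtain Corollary~\ref{cor:compactcovering} from Theorem~\ref{the:compactcovering} by a purely formal manipulation, separating the Poincar\'e polynomials into their even- and odd-degree parts. Setting $p = 2m+1$, Theorem~\ref{the:compactcovering} gives a polynomial $R \in \Z[T]$ with
\[
P_{\mathrm{pr}_1(A)} \;=\; (1 - T^2)\, P_{J^p_{\C,\mathrm{pr}_1}(A)} \;+\; T^{2m+1} R .
\]
Writing, in the notation fixed just before Corollary~\ref{cor:alexanderduality_general}, $P_{\mathrm{pr}_1(A)} = P^{\mathrm{even}}_{\mathrm{pr}_1(A)}(T^2) + T\,P^{\mathrm{odd}}_{\mathrm{pr}_1(A)}(T^2)$ and similarly for $P_{J^p_{\C,\mathrm{pr}_1}(A)}$, one notes that the even-degree part of $(1-T^2)P_{J^p_{\C,\mathrm{pr}_1}(A)}$ is $(1-T^2)\,P^{\mathrm{even}}_{J^p_{\C,\mathrm{pr}_1}(A)}(T^2)$ and its odd-degree part is $T\,(1-T^2)\,P^{\mathrm{odd}}_{J^p_{\C,\mathrm{pr}_1}(A)}(T^2)$.

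I would then track the error term $T^{2m+1}R$: its even-degree monomials all have degree $\geq 2m+2$ and its odd-degree monomials all have degree $\geq 2m+1$. Comparing even-degree coefficients on both sides of the displayed identity (equivalently, the coefficients of $T^j$ after the substitution $T^2 \mapsto T$) therefore yields
\[
P^{\mathrm{even}}_{\mathrm{pr}_1(A)} \;=\; (1-T)\,P^{\mathrm{even}}_{J^p_{\C,\mathrm{pr}_1}(A)} \mod T^{m+1},
\]
while comparing odd-degree coefficients, after cancelling the common factor $T$, gives only the weaker congruence
\[
P^{\mathrm{odd}}_{\mathrm{pr}_1(A)} \;=\; (1-T)\,P^{\mathrm{odd}}_{J^p_{\C,\mathrm{pr}_1}(A)} \mod T^{m}.
\]

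Finally, recalling from \eqref{def:pseudo-Poincarepolynomial2} that $Q_S = P^{\mathrm{even}}_S - T\,P^{\mathrm{odd}}_S$, I would multiply the odd-part congruence by $T$ --- which upgrades it to a congruence modulo $T^{m+1}$ --- and subtract it from the even-part congruence to conclude
\[
Q_{\mathrm{pr}_1(A)} \;=\; (1-T)\bigl(P^{\mathrm{even}}_{J^p_{\C,\mathrm{pr}_1}(A)} - T\,P^{\mathrm{odd}}_{J^p_{\C,\mathrm{pr}_1}(A)}\bigr) \;=\; (1-T)\,Q_{J^p_{\C,\mathrm{pr}_1}(A)} \mod T^{m+1},
\]
which is the assertion. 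There is no real obstacle here: the argument is elementary bookkeeping with truncated power series. The one point to be careful about is that the odd-degree data only transfers modulo $T^m$ rather than $T^{m+1}$, which is nonetheless exactly what is needed, since in $Q_S$ the odd part enters multiplied by $T$; and it is precisely the oddness of $p = 2m+1$ that makes the even and odd truncation orders align so that both halves recombine into a congruence modulo the single power $T^{m+1}$.
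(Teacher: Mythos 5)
Your argument is correct and is essentially the same as the paper's: both proofs reduce the statement to the elementary identities $((1-T^2)P)^{\mathrm{even}} = (1-T)P^{\mathrm{even}}$ and $((1-T^2)P)^{\mathrm{odd}} = (1-T)P^{\mathrm{odd}}$, together with the observation that the oddness of $p = 2m+1$ makes the truncation orders of the even and odd halves combine into a single congruence modulo $T^{m+1}$. The paper states the two identities and leaves the bookkeeping implicit; you spell it out, but there is no difference in method.
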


\begin{proof}
The corollary follows directly from Theorem \ref{the:compactcovering} and
the fact that for any polynomial $P \in \Z[T]$ we have 
\begin{eqnarray*}
((1 - T^2) P)^{\mathrm{even}} &=& (1-T) (P)^{\mathrm{even}}, \\
((1 - T^2) P)^{\mathrm{odd}} &=& (1-T) (P)^{\mathrm{odd}}.
\end{eqnarray*}
\end{proof}

As before we need a slightly more general version of 
Theorem \ref{the:compactcovering}  as well as Corollary
\ref{cor:compactcovering}. 

Let $\alpha_0,\ldots, \alpha_\sigma \geq 0$,
and 
$N = \prod_{0 \leq j \leq \omega} (\alpha_j+1)$. 
Let $\phi$ be a 
homogeneous formula defining a constructible subset of $\PP^{k_0} \times \cdots
\times \PP^{k_{\sigma}}_\C\times \PP^\ell_\C$.
Also, let for each 
$i, 0 \leq i \leq \sigma$, ${\Lambda}^i \in \{\bigvee,\bigwedge\}$, and let
$\Phi$ denote the multi-homogeneous formula defined by 
$$
\displaylines{
\Phi \defeq
\underset{0 \leq i_0 \leq \alpha_0}{{\Lambda}^0} \cdots 
\underset{0 \leq i_\sigma \leq \alpha_\sigma}{{\Lambda}^\sigma}
\phi(\X^0;\cdots;\X^{\sigma}; \Y_{i_0,\ldots,i_\sigma}).
}
$$
Let 
\[
A = \RR(\Phi) \subset \PP^{k_0} \times \cdots
\times \PP^{k_{\sigma}}_\C \times \underbrace{\PP^{\ell}_\C \times \cdots\times 
\PP^{\ell}_\C}_{N \mbox{ times }},
\]
and let
$\mathrm{pr}_{[0,\sigma]}: 
\PP^{k_0}_\C \times \cdots
\times \PP^{k_{\sigma}}_\C \times \underbrace{\PP^{\ell}_\C \times \cdots\times 
\PP^{\ell}_\C}_{N \mbox{ times }}
\rightarrow \PP^{k_0}_\C \times \cdots
\times \PP^{k_{\sigma}}_\C$ 
be the projection onto the first $\sigma+1$ components, 
and suppose that $\mathrm{pr}_{[0,\sigma]}$ restricted to $A$ is a 
compact covering.

For $p \geq 0$, let
$$
\displaylines{
J^p_{\C,\mathrm{pr}_{[0,\sigma]}}(A) \subset \PP^{k_0} \times \cdots
\times \PP^{k_{\sigma}}_\C \times 
\underbrace{\PP^{(\ell+1)(p+1)-1}_\C \times \cdots\times 
\PP^{(\ell+1)(p+1)-1}_\C}_{N \mbox{ times }}
}
$$  
be defined by  the formula

\begin{equation}
\label{eqn:definitionofjoin2}
J^p_{\C,\mathrm{pr}_{[0,\sigma]}}(\Phi) \defeq  
\underset{0 \leq i_0 \leq \alpha_0}{{\Lambda}^0} \cdots 
\underset{0 \leq i_\sigma \leq \alpha_\sigma}{{\Lambda}^\omega}
J^p_{\C,\mathrm{pr}_{[0,\sigma]}} \phi(\X^0;\cdots;\X^{\sigma};\Y_{i_0,\ldots,i_\sigma}).
\end{equation}

\begin{theorem}
\label{the:compactcovering_general}
For every $p \geq 0$, we have that
\begin{eqnarray}
\label{eqn:complexjoin2}
P_{\mathrm{pr}_{[0,\sigma]}(A)}  &=&  (1 - T^2)^N P_{J^p_{\C,\mathrm{pr}_{[0,\sigma]}}(A)}  \mod T^{p}.
\end{eqnarray}
\end{theorem}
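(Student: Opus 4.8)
The plan is to reduce Theorem~\ref{the:compactcovering_general} to an iterated application of the machinery already developed for Theorem~\ref{the:compactcovering} and Proposition~\ref{prop:equivalence_general}. First I would set up the analogue of the commutative square in the proof of Theorem~\ref{the:compactcovering}, but now with the $N$-fold product of projective spaces as the fiber. Concretely, assuming first that $A$ is semi-algebraic and compact, let $B = \mathrm{pr}_{[0,\sigma]}(A) \times \prod_{N} \PP_\C^{(p+1)(\ell+1)-1}$ and consider the square
\[
\xymatrix{
J^p_{\C,\mathrm{pr}_{[0,\sigma]}}(A)  \ar@{^{(}->}[r]^-i \ar[d]^{\mathrm{pr}_{[0,\sigma]}^{(p)}}&
B \ar[d]^{\mathrm{pr}_{[0,\sigma]}} \\
\mathrm{pr}_{[0,\sigma]}(A) \ar[r]^{{\mathrm{Id}}} & \mathrm{pr}_{[0,\sigma]}(A)
}
\]
where $\mathrm{pr}_{[0,\sigma]}^{(p)}$ is the restriction of the projection forgetting the last $N$ blocks. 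Both vertical maps are proper (the ambient spaces are compact), so the Leray--Serre spectral sequences are available and the $E_r$-sequence of the right-hand map degenerates at $E_2$ as before.

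The key point is to identify the stalks of the higher direct image sheaves. By the proper base change theorem, for $\x \in \mathrm{pr}_{[0,\sigma]}(A)$ the stalk $(R^j\mathrm{pr}_{[0,\sigma]*}^{(p)} \Q)_\x$ is $\HH^j$ of the fiber $(\mathrm{pr}_{[0,\sigma]}^{(p)})^{-1}(\x)$, and this fiber is exactly a set of the form
\[
\underset{0 \leq i_0 \leq \alpha_0}{{\Lambda}^0} \cdots
\underset{0 \leq i_\sigma \leq \alpha_\sigma}{{\Lambda}^\sigma}
(J^p_\C(\phi_\x^{(i_0,\ldots,i_\sigma)}))^{(i_0,\ldots,i_\sigma)},
\]
i.e. precisely the kind of Boolean combination of fibered complex joins to which Proposition~\ref{prop:equivalence_general} applies (after re-indexing the blocks of variables $\Y_{i_0,\ldots,i_\sigma}$ as the index set $I$ there). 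Hence Proposition~\ref{prop:equivalence_general} gives that this stalk is isomorphic to $\HH^j(\prod_N \PP_\C^{(p+1)(\ell+1)-1})$ for $j < p$, compatibly with the inclusion into the fiber of $\mathrm{pr}_{[0,\sigma]}$ over $\x$, namely $\{\x\} \times \prod_N \PP_\C^{(p+1)(\ell+1)-1}$. It follows that the sheaf map $\phi_2^{i,j}: E_2^{i,j} \to {'E}_2^{i,j}$ is an isomorphism for $i+j<p$, and then the general spectral-sequence comparison result (cf.\ \cite[page.\ 66]{Mcleary}) yields $\HH^q(J^p_{\C,\mathrm{pr}_{[0,\sigma]}}(A)) \cong \HH^q(\mathrm{pr}_{[0,\sigma]}(A) \times \prod_N \PP_\C^{(p+1)(\ell+1)-1})$ for $q<p$. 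Combining this with the Künneth formula and the identity $P_{\PP_\C^{(p+1)(\ell+1)-1}} = (1-T^2)^{-1} \bmod T^p$ (applied $N$ times) gives \eqref{eqn:complexjoin2} in the compact case; the general case then follows by the same direct-limit argument over $\K(A)$ as in Theorem~\ref{the:compactcovering}, using the compact covering hypothesis on $\mathrm{pr}_{[0,\sigma]}$ restricted to $A$ to match direct limits over $\K(A)$ with those over $\K(\mathrm{pr}_{[0,\sigma]}(A))$.

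I expect the main obstacle to be bookkeeping rather than anything conceptually new: one must carefully check that the fiber of $\mathrm{pr}_{[0,\sigma]}^{(p)}$ over $\x$ really is the Boolean combination of the $(J^p_\C(-))^{(i_0,\ldots,i_\sigma)}$ in the precise form demanded by Proposition~\ref{prop:equivalence_general}, paying attention to how the operators ${\Lambda}^i$ on formulas translate into $\bigcap/\bigcup$ on realizations, to the identification of the $N$ blocks $\Y_{i_0,\ldots,i_\sigma}$ with the index set $I = \prod_j \{0,\ldots,\alpha_j\}$, and to the fact that the quantifier-free formula $\phi$ may carry the free variables $\X^0;\cdots;\X^\sigma$ along (so that $\phi_\x$ denotes the fiber over a point $\x$ of the first $\sigma+1$ blocks). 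Once that dictionary is pinned down, every analytic ingredient --- properness, proper base change, degeneration of $E_r$, the spectral sequence comparison, Künneth, and the direct-limit passage --- is imported verbatim from the proof of Theorem~\ref{the:compactcovering}, so the write-up can be kept short by referencing that proof.
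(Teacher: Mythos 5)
Your proposal is correct and takes exactly the same route the paper does: the paper's own proof is a one-sentence remark that the argument of Theorem~\ref{the:compactcovering} goes through verbatim with Proposition~\ref{prop:equivalence_general} in place of Proposition~\ref{prop:equivalence} and the K\"unneth-derived identity $P_{(\PP_\C^{(p+1)(\ell+1)-1})^N} = (1-T^2)^{-N} \bmod T^p$, which is precisely the argument you spell out in full. The only blemish is notational: in your display for the fiber over $\x$ the superscript $(i_0,\ldots,i_\sigma)$ should sit only on the pullback operator $(J^p_\C(\cdot))^{(i_0,\ldots,i_\sigma)}$ and not on $\phi_\x$ itself (the fiber set $X_\x = \RR(\phi)_\x$ being one and the same for all indices, which is exactly why Proposition~\ref{prop:equivalence_general} applies), but your surrounding prose makes clear you understand this.
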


\begin{proof}
The proof is identical to that of Theorem 
\ref{the:compactcovering} above using 
Proposition \ref{prop:equivalence_general}
instead of Proposition \ref{prop:equivalence} and noticing that by
the Kunneth formula for homology, 
the Poincar\'e polynomial of 
\[
\underbrace{\PP^{(\ell+1)(p+1)-1}_\C \times \cdots\times 
\PP^{(\ell+1)(p+1)-1}_\C}_{N \mbox{ times }}
\]
equals $(1-T^2)^{-N} \mod T^p$.
\end{proof}

As before we have the following corollary.

\begin{corollary}
\label{cor:compactcovering_general}
Let $p = 2m +1$ with $m \geq 0$. Then
\begin{eqnarray}
Q_{\mathrm{pr}_{[0,\sigma]}(A)}  &=&  (1 - T)^N\;Q_{J^p_{\C,\mathrm{pr}_{[0,\sigma]}}(A)}  \mod T^{m+1}.
\end{eqnarray}
\end{corollary}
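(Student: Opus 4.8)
The plan is to deduce this from Theorem~\ref{the:compactcovering_general} in exactly the way Corollary~\ref{cor:compactcovering} was deduced from Theorem~\ref{the:compactcovering}, the only new feature being the $N$-th power $(1-T^2)^N$ in place of a single factor $(1-T^2)$. First I would invoke Theorem~\ref{the:compactcovering_general} to write
\[
P_{\mathrm{pr}_{[0,\sigma]}(A)} \;=\; (1-T^2)^N\, P_{J^p_{\C,\mathrm{pr}_{[0,\sigma]}}(A)} \mod T^{p}.
\]

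The key elementary observation is that for every $P \in \Z[T]$ one has $\bigl((1-T^2)^N P\bigr)^{{\mathrm{even}}} = (1-T)^N P^{{\mathrm{even}}}$ and $\bigl((1-T^2)^N P\bigr)^{{\mathrm{odd}}} = (1-T)^N P^{{\mathrm{odd}}}$. This follows by induction on $N$ from the base case $N=1$ already recorded in the proof of Corollary~\ref{cor:compactcovering}; alternatively, it is immediate from the decomposition $P = P^{{\mathrm{even}}}(T^2) + T\,P^{{\mathrm{odd}}}(T^2)$, since multiplying through by $(1-T^2)^N$ replaces $P^{{\mathrm{even}}}(T^2)$ by $\bigl((1-T)^N P^{{\mathrm{even}}}\bigr)(T^2)$ and $P^{{\mathrm{odd}}}(T^2)$ by $\bigl((1-T)^N P^{{\mathrm{odd}}}\bigr)(T^2)$, and these two summands remain respectively the even and the (coefficient sequence of the) odd part of the product.

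Combining this with the definition $Q_S = P_S^{{\mathrm{even}}} - T\,P_S^{{\mathrm{odd}}}$ from Eqn.~\eqref{def:pseudo-Poincarepolynomial2}, and writing $P = P_{\mathrm{pr}_{[0,\sigma]}(A)}$, $\widetilde{P} = P_{J^p_{\C,\mathrm{pr}_{[0,\sigma]}}(A)}$, I obtain
\[
Q_{\mathrm{pr}_{[0,\sigma]}(A)} \;=\; P^{{\mathrm{even}}} - T\,P^{{\mathrm{odd}}} \;=\; (1-T)^N\bigl(\widetilde{P}^{{\mathrm{even}}} - T\,\widetilde{P}^{{\mathrm{odd}}}\bigr) \;=\; (1-T)^N\, Q_{J^p_{\C,\mathrm{pr}_{[0,\sigma]}}(A)},
\]
all up to an appropriate power of $T$.

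Finally one tracks the truncation. Forming $Q_S \bmod T^{m+1}$ from $P_S$ requires only $P_S^{{\mathrm{even}}}$ up to degree $m$ (the coefficients $b_{2i}$, $i \le m$) and $P_S^{{\mathrm{odd}}}$ up to degree $m-1$ (the coefficients $b_{2i+1}$, $i \le m-1$), i.e. it uses $P_S$ only through degree $2m = p-1$. Hence a congruence modulo $T^{p}$ between the Poincar\'e polynomials entails the congruence modulo $T^{m+1}$ between the pseudo-Poincar\'e polynomials, which is exactly the asserted identity. There is no genuine difficulty in this argument; the only point that demands attention is the bookkeeping of these truncation degrees, and it is precisely this bookkeeping that forces the hypothesis $p = 2m+1$.
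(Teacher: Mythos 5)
Your argument is correct and is exactly what the paper intends by the phrase ``As before we have the following corollary'': you deduce it from Theorem~\ref{the:compactcovering_general} using the same even/odd decomposition identity that the paper records in the proof of Corollary~\ref{cor:compactcovering}, merely raised to the $N$-th power. The truncation bookkeeping you spell out (from $\bmod\;T^{p}$ on Poincar\'e polynomials to $\bmod\;T^{m+1}$ on pseudo-Poincar\'e polynomials, valid because $Q_S\bmod T^{m+1}$ depends only on $b_0,\dots,b_{2m}$ and $2m=p-1$) is precisely the point the paper leaves implicit.
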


It is clear from the definition that the complex joins of 
languages in 
$\mathbf{P}_\C$ also belong to 
the complexity class
$\mathbf{P}_\C$. We record this observation
formally in the following proposition.

\begin{proposition}[Polynomial time membership testing]
\label{prop:polytime}
Suppose that the sequence of constructible sets
$(S_n \subset \PP^{k(n)}_\C \times \PP^{\ell(n)}_\C)_{n > 0} \in
\mathbf{P}_\C$,
and 
$\X_n = (X_0:\cdots:X_{k(n)})$ 
$\Y_n = (Y_0:\cdots:Y_{\ell(n)})$ are homogeneous co-ordinates
of $\PP_\C^{k(n)}$ and $\PP_\C^{\ell(n)}$ respectively.
Let $p(n)$ be a non-negative polynomial, and for each $n >0$ let
\[
\mathrm{pr}_{1}: \PP^{k(n)}_\C \times \PP^{\ell(n)}_\C \rightarrow
\PP^{k(n)}_\C 
\]
denote the projection on the first component.

Then, 
\[
\left(J^{p(n)}_{\C,\mathrm{pr}_{1}}(S_n) \subset \PP_\C^{k(n)} \times \PP^{(p(n)+1)(\ell(n)+1)-1}_\C 
\right)_{n > 0} \in {\mathbf{P}}_\C.
\]
\end{proposition}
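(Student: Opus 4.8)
The plan is to reduce the statement to the fact that $\mathbf{P}_\C$ is closed under the syntactic operation of forming the conjunction indexed by the join parameter, while observing that the new block of homogeneous coordinates on $\PP_\C^{(p(n)+1)(\ell(n)+1)-1}$ is naturally partitioned into $p(n)+1$ sub-blocks, each of size $\ell(n)+1$, matching the original $\Y$-block. Concretely, by Definition~\ref{def:joinoveramap1}, the set $J^{p(n)}_{\C,\mathrm{pr}_1}(S_n)$ is defined by the multi-homogeneous formula $\bigwedge_{i=0}^{p(n)} \phi_n(\X_n; \Y_n^i)$, where $\phi_n$ is the quantifier-free (multi-)homogeneous formula defining $S_n$ and $\Y_n^0, \ldots, \Y_n^{p(n)}$ are the $p(n)+1$ fresh copies of the $\Y$-block. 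So first I would record that $(S_n)_{n>0} \in \mathbf{P}_\C$ means (unwinding Definition~\ref{def:compactP}) that the sequence of affine cones $C(S_n) \subset \C^{k(n)+1} \times \C^{\ell(n)+1}$, cut out by $\phi_n$, is decided by a B-S-S machine $M$ in time polynomial in $n$.

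Next I would exhibit the machine $M'$ for the join. On input $(\x; \y^0, \ldots, \y^{p(n)}) \in \C^{k(n)+1} \times (\C^{\ell(n)+1})^{p(n)+1}$ — which, after projectivization, is a point of the affine cone $C(J^{p(n)}_{\C,\mathrm{pr}_1}(S_n))$ — the machine $M'$ simply loops $i$ from $0$ to $p(n)$, and for each $i$ calls $M$ as a subroutine on $(\x; \y^i)$, accepting if and only if all $p(n)+1$ calls accept. Since $p(n)$ is a fixed non-negative polynomial and each call to $M$ runs in time polynomial in $n$, the total running time of $M'$ is bounded by $(p(n)+1)$ times a polynomial in $n$, hence polynomial in $n$; and the description of $M'$ (a fixed loop wrapping $M$) is itself uniform. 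This shows membership of $(J^{p(n)}_{\C,\mathrm{pr}_1}(S_n))_{n>0}$ in $\mathbf{P}_\C$ once we match it against the appropriate instance of Definition~\ref{def:compactP}.

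The only genuinely bookkeeping-heavy point — and the step I expect to need the most care — is the reconciliation of block sizes with the normalization in Definition~\ref{def:compactP} and the surrounding remarks: there, the ambient product is $\PP_\C^n \times \cdots \times \PP_\C^n$ ($n$ times), whereas here we have two blocks of polynomially-bounded but unequal sizes $k(n)$ and $(p(n)+1)(\ell(n)+1)-1$. This is handled exactly by the padding device of Remark~\ref{rem:padding} and the discussion following Definition~\ref{def:compactP}: one picks a polynomial $N(n)$ majorizing $k(n)$ and $(p(n)+1)(\ell(n)+1)-1$, embeds both blocks into $\PP_\C^{N(n)}$ via the inclusions $\iota_{\cdot, N(n)}$, and pads the number of blocks up to $N(n)$ using copies of $\iota_{0,N(n)}(\PP_\C^0)$; the machine $M'$ above is trivially modified to check that the padding coordinates are zero (or lie on the padded flag), which costs only polynomial time. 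So the substance of the proof is the one-line observation that a conjunction of a polynomial number of copies of a polynomial-time predicate is polynomial-time, and the rest is matching conventions.

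\begin{proof}
By Definition~\ref{def:joinoveramap1}, if $S_n$ is defined by the quantifier-free multi-homogeneous formula $\phi_n(\X_n;\Y_n)$, then $J^{p(n)}_{\C,\mathrm{pr}_1}(S_n) \subset \PP_\C^{k(n)} \times \PP_\C^{(p(n)+1)(\ell(n)+1)-1}$ is defined by
\[
J^{p(n)}_{\C,\mathrm{pr}_1}(\phi_n) = \bigwedge_{i=0}^{p(n)} \phi_n(\X_n;\Y_n^i),
\]
where $\Y_n^0,\ldots,\Y_n^{p(n)}$ are $p(n)+1$ disjoint blocks of $\ell(n)+1$ variables each, together comprising the homogeneous coordinates on $\PP_\C^{(p(n)+1)(\ell(n)+1)-1}$. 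Passing to affine cones (Notation~\ref{not:affinecone}), the set $C(J^{p(n)}_{\C,\mathrm{pr}_1}(S_n)) \subset \C^{k(n)+1} \times (\C^{\ell(n)+1})^{p(n)+1}$ is cut out by the same conjunction, read over affine coordinates.

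Since $(S_n)_{n>0} \in \mathbf{P}_\C$, there is a B-S-S machine $M$ over $\C$ which, on input $(\x;\y) \in \C^{k(n)+1} \times \C^{\ell(n)+1}$, decides membership in $C(S_n)$ in time bounded by a polynomial $q(n)$. Define a machine $M'$ which, on input $(\x;\y^0,\ldots,\y^{p(n)})$, runs $M$ successively on $(\x;\y^0),\ldots,(\x;\y^{p(n)})$ and accepts if and only if all $p(n)+1$ runs accept. Then $M'$ decides membership in $C(J^{p(n)}_{\C,\mathrm{pr}_1}(S_n))$, and its running time is bounded by $(p(n)+1)\,q(n)$, which is a polynomial in $n$. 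Hence the sequence $(C(J^{p(n)}_{\C,\mathrm{pr}_1}(S_n)))_{n>0}$ lies in $\mathbf{P}_\C$, and therefore so does $(J^{p(n)}_{\C,\mathrm{pr}_1}(S_n))_{n>0}$ by Definition~\ref{def:compactP}. (As in Remark~\ref{rem:padding}, the fact that the two blocks have unequal sizes $k(n)$ and $(p(n)+1)(\ell(n)+1)-1$, both polynomially bounded, causes no difficulty: one pads both blocks to a common size majorizing them and pads the number of blocks accordingly, modifying $M'$ to additionally verify that the padding coordinates vanish, at an extra cost polynomial in $n$.)
\end{proof}
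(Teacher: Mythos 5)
Your proof is correct and is essentially the expansion of the paper's own proof, which is simply the one-liner ``Obvious from the definition of $(J^{p(n)}_{\C,\mathrm{pr}_{1}}(S_n))_{n >0}$.'' You have correctly identified that the join is cut out by the conjunction $\bigwedge_{i=0}^{p(n)} \phi_n(\X_n;\Y_n^i)$, that looping the given machine over the $p(n)+1$ sub-blocks costs only a polynomial factor, and that the block-size mismatch is handled by the paper's padding convention (Remark~\ref{rem:padding}); this is precisely the content the paper leaves to the reader.
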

\begin{proof}
Obvious from the definition of $(J^{p(n)}_{\C,\mathrm{pr}_{1}}(S_n))_{n >0}$.
\end{proof}

\section{Proof of the main theorem}
\label{sec:proof}

We are now in a position to prove Theorem \ref{the:main}.
The proof relies on the following key proposition.

\begin{proposition}
\label{prop:main0}
Let $m(n),k_1(n),\ldots,k_\omega(n)$ be polynomials,  
and let 
$$
\displaylines{
\left(\Phi_n(\X,\Y)\right)_{n>0}
}
$$
be a sequence of multi-homogeneous formulas 
$$ \Phi_n(\X,\Y) \defeq  
(\mathrm{Q}_1 \ZB^1 \in \PP_\C^{k_1})  \cdots 
(\mathrm{Q}_\omega \ZB^\omega \in \PP_\C^{k_\omega})
\phi_n(\X;\Y;\ZB^{1};\cdots;\ZB^{\omega}),$$
having free variables $(\X;\Y) = (X_0,\ldots,X_{k(n)};Y_0,\ldots,Y_{m(n)})$,
with 
\[
\mathrm{Q}_1,\ldots,\mathrm{Q}_\omega \in \{\exists,\forall\}, 
\]
and $\phi_n$ a  multi-homogeneous quantifier-free formula
defining a closed (resp. open) constructible subset 
$$
S_n \subset \PP^k_\C \times \PP^m_\C \times \PP^{k_1}_\C \times \cdots \times
\PP^{k_\omega}_\C.
$$

Suppose also that 
\[
\left(\RR(\phi_n(\X;\Y;\ZB^{1};\cdots;\ZB^{\omega}))\right)_{n > 0}
\in
{\mathbf{P}}_\C.
\]

Then there exist:
\begin{enumerate}
\item 
a sequence of quantifier-free multi-homogeneous formulas 
$$
\displaylines{
\left(
\Theta_n(\X;\V^1;\cdots;\V^{N})
\right)_{n>0},
}
$$
with
$\V^i = (V_0,\ldots,V_{p_i})$, and $N,p_1,\ldots,p_{N}$ 
polynomials in $n$,
such that 
for all $\x \in \PP_\C^k$
$\Theta_n(\x;\V^1;\cdots;\V^{N})$ defines  a 
constructible subset $T_n \subset \PP^{p_1}_\C\times\cdots\times\PP^{p_{N}}_\C$, 
with
\[
\left(T_n\right)_{n >0}\in {\mathbf{P}}_\C;
\] 
\item
polynomial time computable maps
$$
\displaylines{
F_n: \Z[T] \rightarrow \Z[T], 
}
$$
such that
for all $\x \in \PP_\C^k$ 
$$
\displaylines{
Q_{\RR(\Phi_n(\x;\Y))} =  F_n(Q_{\RR(\Theta_n(\x;\V^1;\cdots;\V^N))}).
}
$$
\end{enumerate}
\end{proposition}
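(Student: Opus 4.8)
The plan is to argue by induction on $\omega$, the number of quantified blocks. It is convenient, and costs nothing by a padding argument as in Remark \ref{rem:padding}, to prove the slightly more flexible statement in which the block $\Y$ and each quantified block $\ZB^i$ is allowed to be a polynomially bounded \emph{product} of complex projective spaces rather than a single one; this flexibility is exactly what makes the induction close up. The base case $\omega=0$ is immediate: take $\Theta_n\defeq\phi_n$, let the single $\V$-block be $\Y$, and set $F_n=\Id$, the hypothesis giving $(T_n)_{n>0}\in\mathbf{P}_\C$ directly. For the inductive step write $\Phi_n(\X;\Y)=(\mathrm{Q}_1\ZB^1)\,\Psi_n(\X;\Y;\ZB^1)$, where $\Psi_n\defeq(\mathrm{Q}_2\ZB^2)\cdots(\mathrm{Q}_\omega\ZB^\omega)\phi_n$ has $\omega-1$ quantified blocks and the same quantifier-free matrix $\phi_n$ (defining a Zariski closed, resp.\ open, set).

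There are two observations that drive everything. First, for every fixed $\x$ the set $\RR(\Psi_n(\x;\Y;\ZB^1))\subset\PP^m_\C\times\PP^{k_1}_\C$ is Zariski closed (resp.\ open): peeling off the inner quantifiers of $\Psi_n$ one block at a time, each $\exists$ is a projection along a complete variety, hence a proper (so closed) map, while each $\forall$ is ``complement, coordinate projection (an open map), complement'', and both operations preserve closedness (resp.\ openness). Consequently $\mathrm{pr}_1\colon\PP^m_\C\times\PP^{k_1}_\C\to\PP^m_\C$ restricted to $\RR(\Psi_n(\x;\cdot))$ is proper or open, hence compact covering (Remark \ref{rem:compact_covering}), so Corollary \ref{cor:compactcovering} applies to it for each $\x$. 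Second, the fibred complex join commutes past the remaining quantifiers: unwinding Definition \ref{def:joinoveramap1_geometric} and Definition \ref{def:joinoveramap1}, with $p_1\defeq 2m(n)+1$ and $\W^0,\dots,\W^{p_1}$ fresh copies of $\ZB^1$, one gets the identity $J^{p_1}_{\C,\mathrm{pr}_1}(\RR(\Psi_n(\x;\cdot)))=\RR\bigl(\bigwedge_{i=0}^{p_1}\Psi_n(\X;\Y;\W^i)\,(\x;\cdot)\bigr)$; renaming bound variables and distributing $\exists$ and $\forall$ over the conjunction rewrites $\bigwedge_{i=0}^{p_1}\Psi_n(\X;\Y;\W^i)$ as a multi-homogeneous formula $\Xi_n$ with parameter block $\X$, counted block $(\Y,\W^0,\dots,\W^{p_1})$, \emph{again} $\omega-1$ quantified blocks — now products of projective spaces, which is where the flexible form is used — and quantifier-free matrix $\bigwedge_{i=0}^{p_1}\phi_n^{(i)}$, a conjunction of polynomially many relabelled copies $\phi_n^{(i)}$ of $\phi_n$, hence defining a Zariski closed (resp.\ open) set and with realization family in $\mathbf{P}_\C$.

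With these in hand the induction runs as follows. If $\mathrm{Q}_1=\exists$ then $\RR(\Phi_n(\x;\Y))=\mathrm{pr}_1(\RR(\Psi_n(\x;\cdot)))$, so Corollary \ref{cor:compactcovering} (taken with $p=p_1$) together with the identity above gives $Q_{\RR(\Phi_n(\x;\Y))}=(1-T)\,Q_{\RR(\Xi_n(\x;\cdot))}\ \mod T^{m(n)+1}$; since $\RR(\Phi_n(\x;\Y))\subset\PP^{m(n)}_\C$ has $Q$-polynomial of degree $\le m(n)$, applying $\Trunc_{m(n)}$ turns this into an honest equality. Invoking the inductive hypothesis for $\Xi_n$ produces a quantifier-free multi-homogeneous $\Theta_n(\X;\V^1;\cdots;\V^N)$ with $(\RR(\Theta_n))_{n>0}\in\mathbf{P}_\C$ and polynomial-time maps $F'_n$ satisfying $Q_{\RR(\Xi_n(\x;\cdot))}=F'_n(Q_{\RR(\Theta_n(\x;\V^1;\cdots;\V^N))})$; then $F_n(Q)\defeq\Trunc_{m(n)}\!\bigl((1-T)\,F'_n(Q)\bigr)$ is as required and is polynomial-time computable by Remark \ref{rem:poloperators}. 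If $\mathrm{Q}_1=\forall$ one uses $\forall=\neg\exists\neg$: apply the $\exists$-case to $(\exists\ZB^1)\neg\Psi_n$ (whose matrix $\neg\phi_n$ defines the complementary Zariski open, resp.\ closed, set, again a $\mathbf{P}_\C$ family) and then pass to the complement $\RR(\Phi_n(\x;\Y))=\PP^{m(n)}_\C\setminus\mathrm{pr}_1(\RR(\neg\Psi_n(\x;\cdot)))$ via Corollary \ref{cor:alexanderduality_general}, which prepends the polynomial-time operation $Q\mapsto Q_{\PP^{m(n)}_\C}-\Rec_{m(n)}(Q)$ to $F_n$. Because $\omega$ is fixed and each inductive step multiplies the relevant sizes by at most a polynomial factor, all parameters — the number and sizes of the $\V$-blocks, and the degree and bit-size of $F_n$ — stay polynomial; and $(\RR(\Theta_n))_{n>0}\in\mathbf{P}_\C$ ultimately reduces, through Proposition \ref{prop:polytime} and the inductive hypothesis, to the hypothesis $(\RR(\phi_n))_{n>0}\in\mathbf{P}_\C$. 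The main obstacle, and the real content, is the second observation above: recognizing that the complex join fibred over the projection can be pushed past the inner quantifiers and thereby realized as a purely syntactic duplication of one block in the quantifier-free matrix, while keeping the number of quantified blocks fixed and preserving the closed/open dichotomy on which the compact-covering hypothesis of Corollary \ref{cor:compactcovering} rests; everything else is bookkeeping.
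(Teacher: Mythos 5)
Your outline captures the right ideas — induction on $\omega$, the fibered complex join to eliminate the outermost quantifier block, Corollary \ref{cor:compactcovering} to relate $Q$ of the image to $Q$ of the join, Alexander--Lefschetz duality (via Corollary \ref{cor:alexanderduality_general}) for $\forall$, and the observation that peeling quantifiers preserves the closed/open dichotomy needed for the compact-covering hypothesis. The base case and the case $\mathrm{Q}_1=\exists$ on the \emph{first} pass through the induction are fine, and the identity $J^{p_1}_{\C,\mathrm{pr}_1}(\RR(\Psi_n(\x;\cdot)))=\RR\bigl(\bigwedge_{i=0}^{p_1}\Psi_n(\X;\Y;\W^i)(\x;\cdot)\bigr)$ is correct as a formula-level restatement of Definition \ref{def:joinoveramap1}.

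The gap is in how you set up the inductive statement. You declare you will prove the ``more flexible'' statement in which $\Y$ and each $\ZB^i$ is a product of projective spaces, and you justify this with ``costs nothing by a padding argument as in Remark \ref{rem:padding}.'' That remark handles \emph{unequal block sizes} by zero-padding inside a larger affine/projective space; it does not, and cannot, turn a product $\PP^a_\C\times\PP^b_\C$ into a single projective space, which is precisely the obstruction the paper flags just before Proposition \ref{prop:main} (``the product of two projective spaces is not itself a projective space''). After one application of the join, the remaining quantified blocks $(\ZB^{j,0},\dots,\ZB^{j,p_1})$ range over $(\PP^{k_j}_\C)^{p_1+1}$, and in the next inductive step you want to take a fibered complex join along such a product. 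But Definitions \ref{def:complexjoin}, \ref{def:p-foldcomplexjoin}, \ref{def:joinoveramap1} define the complex join only for a fiber living in a single $\PP^\ell_\C$, and Theorem \ref{the:compactcovering} / Corollary \ref{cor:compactcovering}, which you cite, are proved only in that setting. The product case requires a separate topological statement: the fiber of the joint join over a point of the base is then of the shape $\prod_j J_\C^p(X_j)$ (or a boolean combination of such), which is cohomologically a product of projective spaces through degree $p$ and therefore contributes a factor $(1-T^2)^N$ rather than $(1-T^2)$. This is exactly the content of Proposition \ref{prop:equivalence_general}, Theorem \ref{the:compactcovering_general} and Corollary \ref{cor:compactcovering_general} in the paper — none of which your proposal invokes. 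Moreover, even these would not apply verbatim to the formula you produce after one step, because in $\bigwedge_{i}\phi_n(\X;\Y;\W^i;\ZB^{2,i};\dots)$ the non-eliminated block $\W^i$ is also indexed by $i$, whereas the paper's general statements are phrased with the non-eliminated blocks shared across the boolean combination; the needed variant (different basic sets $X_i$ per index) is true and straightforward (Proposition \ref{prop:equivalence} plus K\"unneth plus Mayer--Vietoris), but it has to be stated and used.

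So the architecture is sound but the flexible inductive hypothesis is not free: you owe the reader the product/boolean generalization of the join-and-Betti-number machinery, which is where the paper in fact expends its effort. The paper's Proposition \ref{prop:main} makes a different bookkeeping choice — it keeps every quantified block a \emph{single} projective space by leaving the inner quantifiers inside the explicit boolean combination $\underset{i_0}{\Lambda^0}\cdots\underset{i_\sigma}{\Lambda^\sigma}$, so that what gets duplicated by the join is tracked through the indices $i_0,\dots,i_\sigma$ and fed to Corollary \ref{cor:compactcovering_general} with the correction factor $(1-T)^{\alpha(n)}$. Your presentation instead pushes the quantifiers outward and absorbs the duplication into product blocks, which is a legitimate alternative, but it then requires you to state and prove (or at least correctly cite) the topological statements for product fibers and possibly varying fiber sets; as written, the invocation of Corollary \ref{cor:compactcovering} in the recursive step is not justified.
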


The idea behind the proof of Proposition \ref{prop:main0} is to use induction
on the number, $\omega$, of quantifier blocks.
When $\omega=0$, the proposition
is obvious. When $\omega > 0$, then using 
Corollary \ref{cor:compactcovering}, we can construct
a new formula (say $\Phi'_n$)
such that $\Phi'$ has one less block of quantifiers, 
but such that $Q_{\RR(\Phi_n)}$
is easily computable from $Q_{\RR(\Phi_n')}$. One
can then use induction to finish the proof. However, a technical complication
arises due to the fact that in the projective situation (unlike
in the affine case) we cannot immediately replace two adjacent 
blocks of the same quantifier by a single block. This is the 
logical manifestation of the elementary 
fact that the product of two 
projective spaces is not itself a projective space. In order to overcome 
this difficulty and carry through the inductive step properly, we need to prove
a slightly stronger, but technically more 
involved  proposition, which we state next.
Proposition \ref{prop:main0} will be an immediate corollary of this more 
general proposition.

\begin{proposition}
\label{prop:main}
Let $\sigma, \omega \geq 0$ be constants, and
$$
\displaylines{
a_0(n),\alpha_0(n),a_1(n),\alpha_1(n),\ldots,a_{\sigma}(n),\alpha_{\sigma}(n), \cr
k(n),
k_1(n),\ldots,k_\omega(n)
}
$$ 
fixed polynomials in $n$ taking non-negative values for $n \in \mathbb{N}$.

Let $\X = (X_0:\cdots:X_{k(n)})$ denote a block of $k(n)+1$ variables.
For $0 \leq j \leq \sigma$,  let
$\W^j$ denote the tuple of variables 
\[
(\ldots, \W^j_{i_0,\ldots,i_{j}},\ldots), 0 \leq i_0 \leq \alpha_0,
\ldots,0 \leq i_j \leq \alpha_j,
\]
where each
$\W^j_{i_0,\ldots,i_{j}}$ is a block of $a_j(n)+1$ variables.

Let
$$
\displaylines{
\left(\Phi_n(\X;\W^0;\W^1;\ldots;\W^{\sigma})
\right)_{n>0},
}
$$
be a sequence of multi-homogeneous formulas defined by
$$ 
\displaylines{
\Phi_n(\X;\W^0;\W^1;\cdots;\W^{\sigma}) \defeq  \cr
\underset{0 \leq i_0 \leq \alpha_0}{{\Lambda}^0}\cdots 
\underset{0 \leq i_n \leq \alpha_{\sigma}} {{\Lambda}^\sigma}
(\mathrm{Q}_1 \ZB^1 \in \PP_\C^{k_1}) \cdots (\mathrm{Q}_\omega \ZB^\omega \in \PP_\C^{k_\omega})\cr
\phi_{n}(\X;\W^0_{i_0};\W^1_{i_0,i_1};\cdots;\W^{\sigma}_{i_0,\ldots,i_\sigma};\ZB^{1};\cdots;\ZB^{\omega}),
}
$$
with
$$
\displaylines{
{\Lambda}^0,\ldots,{\Lambda}^{\sigma} \in \{\bigvee,\bigwedge\}, 
}
$$
\[
\mathrm{Q}_1,\ldots,\mathrm{Q}_\omega \in \{\exists,\forall\}, 
\]
and each $\phi_{n}$ a multi-homogeneous quantifier-free formula,
multi-homogeneous in the blocks of variables,
$\X,\ZB^1,\ldots,\ZB^\omega$ and 
$(W^j_{i_0,\ldots,i_j,0},\ldots,W^j_{i_0,\ldots,i_j,\alpha_j})$ for $0 \leq j \leq \sigma$.
Suppose also that each 
$\phi_n$ defines a closed (resp. open) constructible set,
and that
\[
\left(\RR(\phi_n)\right)_{n > 0}
\in
{\mathbf{P}}_\C.
\]

Then there exists:
\begin{enumerate}
\item 
a sequence of quantifier-free multi-homogeneous formulas 
$$
\displaylines{
\left(
\Theta_n(\X;\V^1;\cdots;\V^{N})
\right)_{n>0},
}
$$
with
$\V^i = (V_0,\ldots,V_{p_i})$, and $N,p_1,\ldots,p_{N}$ 
polynomials in $n$,
such that $\Theta_n(\x;\V^1;\cdots;\V^{N})$ defines  a 
constructible subset $T_n \subset \PP^{p_1}_\C\times\cdots\times\PP^{p_{N}}_\C$, 
with
\[
\left(T_n\right)_{n >0}\in {\mathbf{P}}_\C;
\] 
\item
polynomial time computable maps
$$
\displaylines{
F_n: \Z[T] \rightarrow \Z[T], 
}
$$
such that 
$$
\displaylines{
Q_{\RR(\Phi_n(\x;\cdot))} =  F_n(Q_{\RR(\Theta_n(\x;\V^1;\cdots;\V^N))}).
}
$$
\end{enumerate}
\end{proposition}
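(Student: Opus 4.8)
I would prove Proposition~\ref{prop:main} by induction on $\omega$, the number of quantifier blocks, with the (harmless, cf.\ the remark after Definition~\ref{def:compactpolynomialhierarchy}) convention that each block $\mathrm{Q}_j\ZB^j$ may in fact stand for a polynomially bounded tuple of projective‑space blocks all carrying the quantifier $\mathrm{Q}_j$; Proposition~\ref{prop:main0} is then the case $\sigma=0$ with a single trivial index level. For $\omega=0$ the formula $\Phi_n=\Lambda^0_{i_0}\cdots\Lambda^\sigma_{i_\sigma}\phi_n$ is already quantifier‑free, so I take $\Theta_n:=\Phi_n$ (relabelling the $\W$‑blocks as $\V$‑blocks), $F_n:=\mathrm{id}$, and $T_n:=\RR(\Phi_n(\x;\cdot))$, which lies in $\mathbf{P}_\C$ since $(\RR(\phi_n))_{n>0}\in\mathbf{P}_\C$ and $\mathbf{P}_\C$ is stable under finite unions and intersections. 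If $\mathrm{Q}_1=\forall$, I run the $\exists$‑construction below on $\neg\Phi_n$ instead: its outermost quantifier is $\exists$, it has the same number of blocks, and its quantifier‑free matrix is again in $\mathbf{P}_\C$ and defines a set that is closed resp.\ open with the roles interchanged. This yields $\Theta'_n,F'_n$; since $\RR(\neg\Phi_n(\x;\cdot))$ is the complement of $\RR(\Phi_n(\x;\cdot))$ inside the ambient product $\mathbf{P}$ of projective spaces, Corollary~\ref{cor:alexanderduality_general} gives $Q_{\RR(\Phi_n(\x;\cdot))}=Q_{\mathbf{P}}-\Rec_k\big(Q_{\RR(\neg\Phi_n(\x;\cdot))}\big)$ with $Q_{\mathbf{P}}$ an explicit fixed polynomial, and I set $\Theta_n:=\Theta'_n$, $F_n:=Q\mapsto Q_{\mathbf{P}}-\Rec_k(F'_n(Q))$, still polynomial time by Remark~\ref{rem:poloperators}.

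\textbf{The logical rewriting (case $\mathrm{Q}_1=\exists$).}
Write $\Phi_n=\Lambda^0_{i_0}\cdots\Lambda^\sigma_{i_\sigma}(\exists\ZB^1)\psi_n$, where $\psi_n=(\mathrm{Q}_2\ZB^2)\cdots(\mathrm{Q}_\omega\ZB^\omega)\phi_n$ carries $\omega-1$ quantifier blocks. The first move is purely syntactic: for each of the $N:=\prod_{j=0}^\sigma(\alpha_j+1)$ index tuples $\mathbf{i}=(i_0,\ldots,i_\sigma)$ introduce a fresh copy $\ZB^1_{\mathbf{i}}$ of $\ZB^1$. In the projective setting an existential quantifier may be pushed through both a disjunction and a conjunction provided each term gets its own witness — this is precisely the difficulty, absent over the affine line in \cite{BZ09}, mentioned in the overview — so
\[
\RR\big(\Phi_n(\x;\cdot)\big)=\mathrm{pr}\big(\RR(\Psi_n(\x;\cdot))\big),\qquad
\Psi_n:=\Lambda^0_{i_0}\cdots\Lambda^\sigma_{i_\sigma}\,\psi_n[\ZB^1:=\ZB^1_{\mathbf{i}}],
\]
where $\mathrm{pr}$ forgets the $N$ blocks $\ZB^1_{\mathbf{i}}$. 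Because $\phi_n$ defines a closed resp.\ open set and this property is preserved both by finite unions and intersections and by quantification over the compact factors $\PP^{k_j}_\C$ (projection along a compact factor is both a closed and an open map, and $\forall=\neg\exists\neg$), the set $A:=\RR(\Psi_n(\x;\cdot))$ is closed resp.\ open; hence $\mathrm{pr}|_A$ is closed resp.\ open, and in particular compact covering by Remark~\ref{rem:compact_covering}.

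\textbf{Stripping the quantifier and closing the induction.}
Fix $p=2m+1$ with $m$ a polynomial in $n$ bounding the complex dimension of the ambient product in which $\RR(\Phi_n(\x;\cdot))$ lies. Corollary~\ref{cor:compactcovering_general} — or rather the version obtained by its verbatim proof when the fibres of $A$ over the various index tuples are allowed to differ, a harmless extension since Proposition~\ref{prop:equivalence_general} and the fibrewise Leray--Serre comparison are insensitive to it — then gives
\[
Q_{\RR(\Phi_n(\x;\cdot))}=Q_{\mathrm{pr}(A)}=\Trunc_m\big((1-T)^N\,Q_{J^p_{\C,\mathrm{pr}}(A)}\big),
\]
the truncation being lossless by the choice of $m$. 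Now $J^p_{\C,\mathrm{pr}}(A)=\RR(\Xi_n(\x;\cdot))$ for $\Xi_n:=\Lambda^0_{i_0}\cdots\Lambda^\sigma_{i_\sigma}\bigwedge_{r=0}^p\psi_n[\ZB^1:=\ZB^{1,(r)}_{\mathbf{i}}]$, and pushing the quantifiers of $\psi_n$ outward through $\bigwedge_r$ and through the $\Lambda^\bullet$'s — again one fresh witness per term, valid now for $\exists$ and $\forall$ alike — rewrites $\Xi_n$ in the exact template of the proposition: one fewer quantifier block ($\mathrm{Q}_2,\ldots,\mathrm{Q}_\omega$, each over a polynomially larger tuple of blocks), one extra Boolean index level (the index $r$, combined by $\bigwedge$, so $\alpha_{\sigma+1}:=p$), the $\ZB^{1,(r)}_{\mathbf{i}}$ as new free $\W$‑blocks, and quantifier‑free matrix $\bigwedge_{r=0}^p\phi_n$, still closed resp.\ open and still in $\mathbf{P}_\C$. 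The induction hypothesis applied to $\Xi_n$ produces $\Theta^\Xi_n,F^\Xi_n$; then $\Theta_n:=\Theta^\Xi_n$ and $F_n:=Q\mapsto\Trunc_m\big((1-T)^N\,F^\Xi_n(Q)\big)$ do the job, the operators $\Trunc_m$ and multiplication by $(1-T)^N$ being polynomial time. Since $\sigma$ and $\omega$ are constants the recursion has constant depth, so all block counts, block sizes and index ranges stay polynomially bounded and membership in $\mathbf{P}_\C$ is preserved at every step.

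\textbf{Main obstacle.}
The two delicate points are these. First, the quantifier‑pushing with per‑term witnesses is the device that trades a projective existential (or, after negation, universal) quantifier for the complex join fibered over a projection \emph{without ever forming a product of projective spaces inside a single quantifier}; this is the genuinely new feature over $\C$, and it is exactly what forces the elaborate prefix‑indexed Boolean bookkeeping of Proposition~\ref{prop:main} and makes that class of formulas closed under one step of the induction. Second, one must verify that Corollary~\ref{cor:compactcovering_general} survives both a \emph{quantified} inner formula $\psi_n$ and index‑dependent fibres — which it does, because its proof is purely topological (Proposition~\ref{prop:equivalence_general}, a Leray--Serre spectral sequence comparison, and a direct limit) and only needs a semi‑algebraic set equipped with a compact covering projection, a hypothesis that the closed/open propagation above guarantees throughout.
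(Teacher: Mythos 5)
Your proposal follows the same overall strategy as the paper: induction on $\omega$, the base case $\Theta_n=\Phi_n$, the pushing of the outermost existential quantifier through the Boolean prefix with one fresh witness per index tuple, the fibered complex join combined with Corollary~\ref{cor:compactcovering_general}, Alexander--Lefschetz duality for the universal case, and the final composition $F_n=\Trunc_m\circ M_{(1-T)^N}\circ(\text{recursive map})$. Your observation that the generalization of Corollary~\ref{cor:compactcovering_general} must in fact allow index-dependent fibres (since the free $\W^j_{i_0,\ldots,i_j}$ blocks vary over index tuples, so the fibre subsets of $\PP^{k_1}_\C$ are \emph{not} all equal to a single $X$) is a genuine and useful precision: the statement of Proposition~\ref{prop:equivalence_general} as written involves a single set $X$, but its proof via Proposition~\ref{prop:equivalence}, K\"unneth and Mayer--Vietoris is indeed indifferent to allowing different sets in each factor, exactly as you say.

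There is, however, one genuinely confused step in your Case~1. After forming $\Xi_n=\Lambda^0_{i_0}\cdots\Lambda^\sigma_{i_\sigma}\bigwedge_{r=0}^{p}\psi_n[\ZB^1:=\ZB^{1,(r)}_{\mathbf{i}}]$, you claim to need to ``push the quantifiers of $\psi_n$ outward through $\bigwedge_r$ and through the $\Lambda^\bullet$'s,'' ending up with quantifiers $\mathrm{Q}_2,\ldots,\mathrm{Q}_\omega$ ``each over a polynomially larger tuple of blocks'' and with quantifier-free matrix $\bigwedge_{r}\phi_n$. This step is both unnecessary and, taken literally, incompatible with the template you need to land in: the template of Proposition~\ref{prop:main} has the Boolean prefix \emph{outside} the quantifier prefix, so moving $\mathrm{Q}_2,\ldots,\mathrm{Q}_\omega$ past the $\Lambda^\bullet$'s produces a formula of the wrong shape, and the simultaneous claims ``$r$ is a new Boolean index level $\Lambda^{\sigma+1}$'' and ``the quantifier-free matrix is $\bigwedge_r\phi_n$'' are mutually inconsistent. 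The correct observation, which is what the paper records after displaying the formula for $J^{2m+1}_{\C,\mathrm{pr}_{1,2}}(\bar\Phi_n)$, is that $\Xi_n$ is \emph{already} in the required template with $\sigma$ replaced by $\sigma+1$ (taking $\Lambda^{\sigma+1}=\bigwedge$, $\alpha_{\sigma+1}=p$, and $\W^{\sigma+1}_{i_0,\ldots,i_{\sigma+1}}=\ZB^{1,(i_{\sigma+1})}_{\mathbf{i}}$ as the new free blocks) and with $\omega$ replaced by $\omega-1$, keeping $\mathrm{Q}_2,\ldots,\mathrm{Q}_\omega$ \emph{inside} the Boolean prefix and leaving the quantifier-free matrix equal to $\phi_n$. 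Once you drop the quantifier-pushing and simply apply the inductive hypothesis to $\Xi_n$ as it stands, the step is correct and identical to the paper's. Your reorganization of the $\forall$-case (negate once at the outermost level, apply the whole $\exists$-machinery to $\neg\Phi_n$, then apply duality once) is a harmless repackaging of the paper's per-level duality, though you should insert an explicit $\Trunc_m$ before $\Rec_k$ to guarantee $\Rec_k$ is well-defined as the paper does.
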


\begin{proof}[Proof of Proposition \ref{prop:main}] 
The proof is by induction on $\omega$.

If $\omega=0$,
we let $\Theta_n = \Phi_n$, 
and 
$F_n$ to be the identity map.
Since there are no quantifiers, for each
$n \geq 0$ the
constructible set
defined by  $\Theta_n$ and  $\Phi_n$ are the same, and thus the
Betti numbers of the sets defined by 
$\Theta_n$ and  $\Phi_n$ are equal.

If $\omega > 0$, we have the following two cases.
\begin{enumerate}
\item
Case 1, $\mathrm{Q}_1 = \exists$: 
First note that $\Phi_n$ defines a constructible subset of 
$\PP_\C^{k(n)} \times U_n$, where 
$$
\displaylines{
U_n = (\PP_\C^{(a_0+1)(\alpha_0 +1)-1})^{m_0}\times  \cdots \times
(\PP_\C^{(a_j+1)(\alpha_j +1)-1})^{m_j} \times \cdots
\times 
(\PP_\C^{(a_\sigma+1)(\alpha_\sigma +1)-1})^{m_\sigma},
}
$$
where for $0 \leq j \leq \sigma$,
$$
\displaylines{
m_j(n) = \prod_{0 \leq i \leq j-1} (\alpha_i(n) +1).
}
$$ 

The formula $\Phi_n$ is equivalent to the following formula:
$$
\displaylines{
\left(
\cdots (\exists \ZB^{1,i_0,\ldots,i_{\sigma}} \in \PP^{k_1}_\C) \cdots
\right)
\bar{\Phi}_n 
}
$$
where
where the blocks of existential quantifiers in the beginning 
are indexed by the tuples 
\[(i_0,\ldots,i_{\sigma}), 0 \leq i_0 \leq \alpha_0(n),
\ldots, 0 \leq i_{\sigma} \leq \alpha_{\sigma}(n).
\]
and the number of such blocks is 
\[
\alpha(n) = \prod_{i=0}^{\sigma} (\alpha_i(n)+1),
\]
and
$$
\displaylines{
\bar{\Phi}_n \defeq 
\underset{0 \leq i_0 \leq \alpha_0}{{\Lambda}^0}\cdots 
\underset{0 \leq i_{\sigma} \leq \alpha_{\sigma}}{{\Lambda}^{\sigma}} \cr 
(\mathrm{Q}_2 \ZB^2 \in \PP_\C^{k_2})  \cdots 
(\mathrm{Q}_\omega \ZB^\omega \in \PP_\C^{k_\omega})
\phi_{n}(\X;\W^0_{i_0};\cdots,\W^{\sigma}_{i_0,\ldots,i_\sigma};
\ZB^{1}
_{i_0,\ldots,i_{\sigma}};\ZB^2\cdots;\ZB^{\omega}),
}
$$

Let
$$
\displaylines{
m(n) = \sum_{j=0}^{\sigma} m_j(n)((a_j(n)+1)(\alpha_j(n) +1)-1). 
}
$$
(Note that $m(n)$ is the (complex) dimension of $U_n$ defined previously.)

Let 
\[
\mathrm{pr}_{1,2}: \PP_\C^{k(n)} \times
U_n \times (\PP_\C^{k_1})^{\alpha(n)}   \rightarrow \PP_\C^{k(n)} \times U_n
\]
denote the projection on the first two components.

Consider the sequence
$$
\displaylines{
\left(
J_{\C,\mathrm{pr}_{1,2}}^{2m(n)+1}(\bar{\Phi}_n)
\right)_{n>0}.
}
$$ 
Note that by \ref{eqn:definitionofjoin2} we have
$$
\displaylines{
J_{\C,\mathrm{pr}_{1,2}}^{2m(n)+1}(\bar{\Phi}_n) = \cr
\underset{0 \leq i_0 \leq \alpha_0}{{\Lambda}^0}
\cdots 
\underset{0 \leq i_{\sigma} \leq \alpha_{\sigma}}{{\Lambda}^{\sigma}}
\;\;
\underset{0 \leq i_{\sigma+1} \leq \alpha_{\sigma+1}}{{\Lambda}^{\sigma+1}} \cr
(\mathrm{Q}_2 \ZB^2 \in \PP_\C^{k_2})  \cdots 
(\mathrm{Q}_\omega \ZB^\omega \in \PP_\C^{k_\omega})\cr
\phi_{n}(\X;\W^0_{i_0};\cdots;\W^{\sigma}_{i_0,\ldots,i_\sigma};
\W^{\sigma+1}_{i_0,\ldots,i_\sigma,i_{\sigma+1}};\ZB^2;\cdots;\ZB^{\omega})
.
}
$$
with ${\Lambda}^{\sigma+1} = \bigwedge$, $\alpha_{\sigma+1} = 2m+1$, and
$\W^{\sigma+1}_{i_0,\ldots,i_{\sigma},i_{\sigma+1}}
= \ZB^{1}_{i_0,\ldots,i_{\sigma},i_{\sigma+1}}$.
We will denote by 
$\W^{\sigma+1}$
the tuple
\[
(\ldots,W^{\sigma+1}_{i_0,\ldots,i_{\sigma},i_{\sigma+1}},\ldots), 0 \leq i_0 \leq \alpha_0, \ldots, 0 \leq i_{\sigma+1} \leq \alpha_{\sigma+1}.
\]

Observe that the  number of quantifiers in the formulas 
$J_{\C,\mathrm{pr}_{1,2}}^{2m(n)+1}(\bar{\Phi}_n),
$ is $\omega-1$.

Moreover, $J_{\C,\mathrm{pr}_{1,2}}^{2m(n)+1}(\bar{\Phi}_n)$
satisfy by Proposition \ref{prop:polytime} 
the required polynomial time hypothesis,
and have the same shape as the formulas $\Phi_n$.
We can thus apply the  induction hypothesis to this
sequence to obtain a sequence
$(\Theta_n)_{n > 0}$, as well as 
a sequence of polynomial time computable 
maps
$(G_n)_{n > 0}$.
By inductive hypothesis we can suppose that
for each $\x \in \PP_\C^{k(n)}$
\[ 
Q_{\RR(J_{\C,\mathrm{pr}_{1,2}}^{2m(n)+1}(\bar{\Phi}_n)(\x;\cdot))} =
G_{n}(Q_{\RR(\Theta_n(\x;\cdot))}).
\]

Using Corollary \ref{cor:compactcovering_general} 
and noticing that the map $\mathrm{pr}_{1,2}$ is either
open or closed and hence a compact covering,

\begin{eqnarray*}
Q_{\RR(\Phi_n(\x;\cdot))} &=& 
(1-T)^{\alpha(n)}Q_{\RR(J_{\C,\mathrm{pr}_{1,2}}^{2m(n)+1}(\bar{\Phi}_n)(\x;\cdot))}
\mod T^{m(n)+1} \\
&=&
(1-T)^{\alpha(n)} G_n(Q_{\RR(\Theta_n(\x;\cdot))}) \mod T^{m(n)+1}.
\end{eqnarray*}

We set 
\[
F_{n}= {\rm Trunc}_{m(n)}\circ M_{(1-T)^{\alpha(n)}}\circ G_{n}
\]
(see Notation \ref{not:poloperators}).
This completes the induction in this case.

\item
Case 2, $\mathrm{Q}_1 = \forall$: 

The formula $\Phi_n$ is equivalent to the following formula:
$$
\displaylines{
\left(\cdots (\forall \ZB^{1,i_0,\ldots,i_{\sigma}} \in \PP^{k_1}_\C) \cdots\right)
\bar{\Phi}_n 
}
$$
where the blocks of universal quantifiers in the beginning 
are indexed by the tuples 
\[(i_0,\ldots,i_{\sigma}), 0 \leq i_0 \leq \alpha_0(n),
\ldots, 0 \leq i_{\sigma} \leq \alpha_{\sigma}(n),
\]
the number of such blocks is 
\[
\alpha(n) = \prod_{i=0}^{\sigma} (\alpha_i(n)+1),
\]
and
$$\displaylines{
\bar{\Phi}_n \defeq
\underset{0 \leq i_0 \leq \alpha_0}{{\Lambda}^0}
\cdots 
\underset{0 \leq i_{\sigma} \leq \alpha_{\sigma}}{{\Lambda}^{\sigma}} \cr 
(\mathrm{Q}_2 \ZB^2 \in \PP_\C^{k_2})  \cdots 
(\mathrm{Q}_\omega \ZB^\omega \in \PP_\C^{k_\omega})
\phi_{n}(\X;\W^0_{i_0};\cdots;\W^{\sigma}_{i_0,\ldots,i_\sigma};
\ZB^{1}_{i_0,\ldots,i_{\sigma}};\ZB^2\cdots;\ZB^{\omega}).
}
$$

Consider the sequence
$$
\displaylines{
\left(
J_{\C,{\mathrm{pr}_{1,2}}}^{2m+1}(\neg \bar{\Phi}_n)
\right)_{n>0}.
}
$$ 

Note that the formula
$$
\displaylines{
J_{\C,\mathrm{pr}_{1,2}}^{2m+1}(\neg \bar{\Phi}_n) = \cr
\underset{0 \leq i_0 \leq \alpha_0}{\bar{{\Lambda}}^0}\cdots 
\underset{0 \leq i_{\sigma} \leq \alpha_{\sigma}} {\bar{{\Lambda}}^{\sigma}}
\underset{0 \leq i_{\sigma+1}\leq \alpha_{\sigma+1}}{{\Lambda}^{\sigma+1}}  \cr
(\bar{Q}_2 \ZB^2 \in \PP_\C^{k_2})  \cdots 
(\bar{Q}_\omega \ZB^\omega \in \PP_\C^{k_\omega})\cr
\neg \phi_{n}(\X;\W^0_{i_0};\cdots;\W^{\sigma}_{i_0,\ldots,i_\sigma};
\W^{\sigma+1}_{i_0,\ldots,i_{\sigma+1}};\ZB^2;\cdots;\ZB^{\omega})
}
$$
with ${\Lambda}^{\sigma+1} = \bigwedge$, $\alpha_{\sigma+1} = 2m+1$, 
$
\displaystyle{ \W^{\sigma+1}_{i_0,\ldots,i_{\sigma+1}}= 
\ZB^{1}_{i_0,\ldots,i_{\sigma},i_{\sigma+1}},
}
$
and
$$
\displaylines{
\bar{{\Lambda}}^i = \bigvee  \mbox{ if }  {\Lambda}^i = \bigwedge, 
\bar{{\Lambda}}^i = \bigwedge \mbox{ if } {\Lambda}^i = \bigvee, \cr
\bar{Q}_i = \exists \mbox{ if } \mathrm{Q}_i = \forall, 
\bar{Q}_i = \forall \mbox{ if } \mathrm{Q}_i = \exists.
}
$$

Observe that the  number of quantifiers in the formulas 
$J_{\C,\mathrm{pr}_{1,2}}^{2m+1}(\neg\bar{\Phi}_n),
$ is $\omega-1$.

Moreover, $J_{\C,\mathrm{pr}_{1,2}}^{2m+1}(\neg\bar{\Phi}_n)$
satisfy by Proposition \ref{prop:polytime} 
the required polynomial time hypothesis,
and have the same shape as the formulas $\Phi_n$.
We can thus apply the  induction hypothesis to this
sequence to obtain a sequence
$(\Theta_n)_{n > 0}$, as well as 
a sequence of polynomial time computable 
maps
$(G_n)_{n > 0}$.
By inductive hypothesis we can suppose that
for each $\x \in \PP_\C^{k(n)}$
\[ 
Q_{\RR(J_{\C,\mathrm{pr}_{1,2}}^{2m+1}(\neg\bar{\Phi}_n)(\x;\cdot))} =
G_{n}(Q_{\RR(\Theta_n(\x;\cdot))}).
\]

Using Corollary \ref{cor:compactcovering_general} 
and noticing that the map $\mathrm{pr}_{1,2}$ is either
open or closed and hence a compact covering,
we have
\begin{eqnarray*}
Q_{\RR(\neg \Phi_n(\x;\cdot))} &=& 
(1-T)^{\alpha(n)}Q_{\RR(J_{\C,\mathrm{pr}_{1,2}}^{2m+1}(\bar{\Phi}_n)(\x;\cdot))}
\mod T^{m(n)+1} \\
&=&
(1-T)^{\alpha(n)} G_n(Q_{\RR(\Theta_n(\x;\cdot))}) \mod T^{m(n)+1}.
\end{eqnarray*}

The sets $K_n = \RR(\Phi_n(\x;\cdot))$ are
constructible and open (resp. closed);
so by
Corollary \ref{cor:alexanderduality_general} (corollary to 
Theorem~\ref{the:alexanderduality_general}), we have

$$
\displaylines{
Q_{K_n}(T) = Q_{U_n}- \Rec_m({\rm Trunc}_m(Q_{U_n - K_n})) .
}
$$

We set 
$F_n$ to be the operator defined by
\begin{equation*} 
F_n(Q) = Q_{U_n} - \Rec_m({\rm Trunc}_m(M_{(1-T)^{\alpha(n)}}(G_n(Q)))).
\end{equation*}
This completes the induction in this case as well.
\end{enumerate}
\end{proof}

\begin{proof}[Proof of Proposition \ref{prop:main0}]
Proposition \ref{prop:main0} is a special case of Proposition
\ref{prop:main} with
$\sigma = 0$, $\alpha_0 = 0$, and $\Y = \W^0$.
\end{proof} 

\begin{proof}[Proof of Theorem \ref{the:main}] 
Follows immediately from Proposition 
\ref{prop:main0} 
in the special case
$m = 0$.
In this case the sequence of formulas $(\Phi_n)_{n>0}$ 
corresponds to a language in the polynomial
hierarchy and for each $n$, $\x = (x_0:\cdots:x_{k(n)}) \in S_n 
\subset\PP_\C^{k(n)}$ 
if and only if
\[
F_{n}(Q_{\RR(\Theta_n(\x;\cdot))})(0) >  0
\]
and 
this last condition can be checked in polynomial time 
using an oracle from
the
class $\#\mathbf{P}^\dagger_\C$.
\end{proof}

\begin{remark}
It is interesting to observe that
in complete analogy with the proof of the classical Toda's 
theorem  the proof of Theorem \ref{the:main} also 
requires just one call to the oracle at the end.
\end{remark}

\begin{proof}[Proof of Theorem \ref{the:main2}]
Follows from the proof of 
Proposition \ref{prop:main0} since the formula
$\Theta_n$ is clearly computable in polynomial time from the given formula
$\Phi_n$ as long as the number of quantifier alternations $\omega$ is 
bounded by a constant.
\end{proof}

\section{Future Directions}
\label{sec:future}
In this section, we sketch
a few directions in which the work presented in this paper 
could be developed further.

\begin{enumerate}
\item
Remove the compactness hypothesis from the main theorem.
\item
The compact fragment of the polynomial hierarchy introduced in this paper,
and especially the class 
$\mathbf{\Sigma}_{\C,1}^c$ (which is the compact fragment of 
$\mathbf{NP}_\C$), is possibly interesting on their
own, and it would be nice to develop a theory of compact reductions and 
compact hardness, and  
have $\mathbf{NP}^c_\C$-complete problems. The compact feasibility
problem discussed in Example \ref{eg:compact} 
is a good candidate for being a 
$\mathbf{NP}^c_\C$-complete problem.
\item
As remarked earlier, one would obtain
a stronger reduction result if one
could prove a Toda-type theorem using only the Euler-Poincar\'e characteristic
instead of the whole Poincar\'e polynomial. This seems to be rather difficult.
An intermediate goal could be to use the 
{\em virtual Poincar\'e polynomial}. The 
virtual Poincar\'e polynomial of a complex variety $X$ is defined by 
\[
\mathcal{P}_X(T) = H_X(-T,-T),
\]
where $H_X(u,v) \in \mathbb{Z}[u,v]$ is the Hodge-Deligne polynomial uniquely
determined by the following properties.
\begin{enumerate}
\item
The map $X\mapsto H_X$ gives an additive and multiplicative invariant from the
Grothendieck ring of equivalence classes of complex varieties to 
$\mathbb{Z}[u,v]$.
\item
$H_X(u,v)$ coincides with $\sum (-1)^{p+q} h^{p,q}(X) u^pv^q$ when $X$ is 
smooth and projective, where $h^{p,q}(X)$ are the Hodge numbers.
\end{enumerate}
Clearly, the virtual Poincar\'e polynomial is additive, and coincides with the
ordinary Poincar\'e polynomial, $P_X$, in the case when $X$ is smooth and 
projective. Thus, one could try to prove the results in this paper using the
virtual Poincar\'e polynomial instead of the Poincar\'e polynomial. 
Unfortunately, the virtual Poincar\'e polynomial is an algebro-geometric, 
rather than  topological invariant, and the topological methods used in this
paper are not sufficient to obtain such a result. 
In particular, Theorem \ref{the:compactcovering} 
does not hold for
the virtual Poincar\'e polynomial except in the trivial case when 
$A = \PP^k_\C \times \PP^\ell_\C$.
\item
Theorem \ref{the:compactcovering} 
can be used to bound the Betti numbers of the images
of complex varieties under regular maps (in conjunction with tight bounds
on the Betti numbers of complex projective varieties due to Katz \cite{Katz}),
instead of first using elimination methods, and then applying the bounds due
to Katz.
A similar method was used in \cite{GVZ04} to obtain bounds on the Betti numbers
of projections of semi-algebraic sets in the real case. One can also treat a
complex variety as a real semi-algebraic set by separating the real and 
imaginary parts, but the  direct complex
method using Theorem \ref{the:compactcovering}  
suggested above should yield better upper bounds on the Betti numbers of 
projections.
\end{enumerate}
\section{Acknowledgments}
The author benefited from discussions with Donu Arapura, 
Andrei Gabrielov, and
James McClure at various stages of this work. The author also thanks an 
anonymous
referee for making several suggestions which helped to improve this paper.

\bibliographystyle{amsplain}
\bibliography{master}
\end{document}